\newtheorem{thm}{Theorem} \newtheorem{lemma}{Lemma}  \newtheorem{coro}{Corollary}
\newtheorem{propo}{Proposition} 
\definecolor{red1}{rgb}{1,0.9,0.9} \definecolor{blue1}{rgb}{0.9,0.9,1} \definecolor{green1}{rgb}{0.9,1,0.9} 
\definecolor{yellow1}{rgb}{1,1,0.9} \definecolor{yellow2}{rgb}{1,1,0.8}
\let\paragraph\subsection
\newcommand{\str}{\rm{str}}
\newcommand{\ZZ}{\mathbb{Z}}
\newcommand{\RR}{\mathbb{R}}
\newcommand{\CC}{\mathbb{C}}
\newcommand{\G}{\mathcal{G}}
\newcommand{\A}{\mathcal{A}}
\def\osquare{\mbox{\ooalign{$\times$\cr\hidewidth$\square$\hidewidth\cr}} }
\title{The energy of a simplicial complex}
\author{Oliver Knill}
\date{July 7, 2019, [This document contains already announced results but is a fresh write-up with sometimes more detailed proofs.]}
\address{Department of Mathematics \\ Harvard University \\ Cambridge, MA, 02138 }
\subjclass{05C10, 57M15, 68R10}
\keywords{Geometry of simplicial complexes}
\begin{document}
\maketitle

\begin{abstract}
A finite abstract simplicial complex $G$ defines
a matrix $L$, where $L(x,y)=1$ if two simplicies $x,y$
in $G$ intersect and where $L(x,y)=0$ if they don't. This matrix is always unimodular
so that the inverse $g=L^{-1}$ has integer entries $g(x,y)$. In analogy to
Laplacians on Euclidean spaces, these Green function entries define 
a potential energy between
two simplices $x,y$. We prove that the total energy $E(G) = \sum_{x,y} g(x,y)$ 
is equal to the Euler characteristic $\chi(G)$ of $G$ and that the 
number of positive minus the number of negative eigenvalues of $L$ is equal to $\chi(G)$.
\end{abstract}

\section{The theorems}

\paragraph{}
A finite set $G$ of non-empty sets which is closed under the operation
of taking finite non-empty subsets is called a {\bf finite abstract 
simplicial complex}. The $n$ elements in G are called {\bf simplices} or 
{\bf faces}, the $n \times n$ matrix $L$ satisfying $L(x,y)=1$ if $x$ and 
$y$ intersect and $L(x,y)=0$ else is the {\bf connection matrix} of $G$. 
Define ${\rm dim}(x)=|x|-1$, where $|x|$ is the cardinality of $x$. 
If $\omega(x) =(-1)^{{\rm dim}(x)}$, then $\chi(G)=\sum_{x \in G} \omega(x)$ is
the {\bf Euler characteristic} of $G$. A multiplicative analog of $\chi(G)$ is
the {\bf Fermi characteristic} $\phi(G) = \prod_{x \in G} \omega(x) \in \{-1,1\}$.

\paragraph{}
\begin{thm}[Unimodularity theorem \cite{Unimodularity} (2016)] ${\rm det}(L) = \phi(G)$.  
\label{1}
\end{thm}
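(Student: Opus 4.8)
The plan is to induct on the number $n$ of simplices of $G$, the base case $n=0$ (empty complex, $\det(L)=1=\phi(G)$) being immediate. For the inductive step I would fix any vertex $v$, i.e.\ a simplex $\{v\}\in G$, and split the simplex set of $G$ into the star $\mathrm{St}(v)=\{x\in G:v\in x\}$ and its complement $B=G\setminus\mathrm{St}(v)$; the latter is again a simplicial complex. Inside the star, every simplex other than $\{v\}$ has the form $a=\{v\}\cup\sigma$ with $\sigma$ running over the link $\mathrm{lk}(v)=\{\sigma\in G:v\notin\sigma,\ \{v\}\cup\sigma\in G\}$, which is also a simplicial complex and sits inside $B$ as a subcomplex. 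Both $\mathrm{lk}(v)$ and $B$ have strictly fewer simplices than $G$, so the induction hypothesis will apply to each of them; write $L_{\mathrm{lk}(v)}$ and $L_B$ for their connection matrices.

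The core of the proof is the determinant identity
\[
  \det(L) \;=\; (-1)^{|\mathrm{lk}(v)|}\,\det\bigl(L_{\mathrm{lk}(v)}\bigr)\cdot\det(L_B).
\]
To establish it, order the simplices as $\{v\}$, then the remaining star elements $a_i=\{v\}\cup\sigma_i$, then the link elements $\sigma_i$ in the matching order, then the remaining elements of $B$. The row of $L$ indexed by $\{v\}$ equals $1$ on all of $\mathrm{St}(v)$ and $0$ on $B$. Subtracting this row from each row $a_i$ and using $a_i\cap y=\sigma_i\cap y$ whenever $v\notin y$, the row $a_i$ becomes zero on $\mathrm{St}(v)$ and agrees with the row of $\sigma_i$ on $B$. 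Expanding the determinant along the column $\{v\}$ (which now has a single nonzero entry) leaves a minor $M$ of the block form
\[
  M=\begin{pmatrix} 0 & L_{\mathrm{lk}(v)} & C\\ L_{\mathrm{lk}(v)} & L_{\mathrm{lk}(v)} & C\\ C^{T} & C^{T} & L_D\end{pmatrix},
\]
where the last two block rows and columns together are exactly $L_B$. Subtracting the second block row from the first turns the top block row into $(-L_{\mathrm{lk}(v)},0,0)$, so $M$ is block lower triangular and $\det(L)=\det(M)=\det(-L_{\mathrm{lk}(v)})\det(L_B)=(-1)^{|\mathrm{lk}(v)|}\det(L_{\mathrm{lk}(v)})\det(L_B)$.

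Finally I would match this against the multiplicativity of $\phi$ over the disjoint decomposition $G=\mathrm{St}(v)\sqcup B$ of the simplex set. Since $\dim(\{v\}\cup\sigma)=\dim(\sigma)+1$, we get $\omega(\{v\}\cup\sigma)=-\omega(\sigma)$, hence $\phi(\mathrm{St}(v))=\omega(\{v\})\prod_{\sigma\in\mathrm{lk}(v)}(-\omega(\sigma))=(-1)^{|\mathrm{lk}(v)|}\phi(\mathrm{lk}(v))$, and therefore $\phi(G)=(-1)^{|\mathrm{lk}(v)|}\phi(\mathrm{lk}(v))\,\phi(B)$. Plugging the induction hypotheses $\det(L_{\mathrm{lk}(v)})=\phi(\mathrm{lk}(v))$ and $\det(L_B)=\phi(B)$ into the determinant identity gives $\det(L)=\phi(G)$, completing the induction.

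I expect the main obstacle to be the bookkeeping that pins down the block form of the minor $M$: checking that the off-diagonal block along the link columns reproduces $L_{\mathrm{lk}(v)}$, and, crucially, that the block $C$ attached to the $a_i$-rows is literally the same as the one attached to the $\sigma_i$-rows, since that coincidence is what makes the top block row collapse to $(-L_{\mathrm{lk}(v)},0,0)$ after the second elimination. The remaining ingredients — the base case, the multiplicativity of $\phi$, and the dimension shift of the cone over the link — are routine.
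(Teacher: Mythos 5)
Your proof is correct, but it takes a genuinely different route from the one in the paper. You decompose the simplex set as $G=\mathrm{St}(v)\sqcup B$ with $B=G\setminus \mathrm{St}(v)$, and after the row operations you describe, the block bookkeeping does work out: the $(a_i,\sigma_j)$, $(\sigma_i,a_j)$ and $(\sigma_i,\sigma_j)$ blocks all equal $L_{\mathrm{lk}(v)}$ because $v\notin\sigma_i$, and the two copies of $C$ coincide because $a_i\cap y=\sigma_i\cap y$ for $y\in B$; both $\mathrm{lk}(v)$ and $B$ are again simplicial complexes with fewer cells, so the recursion $\det L_G=(-1)^{|\mathrm{lk}(v)|}\det L_{\mathrm{lk}(v)}\det L_B$ closes the induction against the identical recursion for $\phi$. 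The paper instead builds $G$ up cell by cell as a discrete CW complex and proves a multiplicative Poincar\'e--Hopf formula $\psi(G\cup_A x)=\psi(G)(1-\chi(A))$; the ingredients there are a path-pairing argument in the Fredholm determinant expansion (the Fredholm extension lemma, giving $\psi=0$ when the attaching set is a complete complex), the observation that $A\mapsto\psi(G\cup_A x)-\psi(G)$ is a valuation, and the discrete Hadwiger theorem to identify that valuation with $-\chi$. Since each simplex is attached along its boundary sphere with $1-\chi(S^-(x))=\omega(x)$, the product telescopes to $\phi(G)$. Your argument is essentially the elementary linear-algebra proof of Mukherjee and Bera, which the paper explicitly cites as an alternative route; it is shorter and self-contained, needing neither CW complexes nor the valuation machinery. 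What the paper's approach buys in exchange is generality and extra information: it extends verbatim to discrete CW complexes and the strong ring, and the deformation $\det K(t)=(1-t^2\chi(A))\det L$ tracks exactly when an eigenvalue crosses zero, which is what later yields $p(G)-n(G)=\chi(G)$; your recursion does not obviously deliver that spectral statement.
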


\paragraph{}
It follows from the Cramer formula in linear algebra
that the inverse matrix $g=L^{-1}$ has integer entries
$g(x,y)$. We can think of $g(x,y)$ as the {\bf potential energy} between the 
simplices $x$ and $y$. The number $E(G)=\sum_{x,y \in G} g(x,y)$ is the
{\bf total energy} of $G$. 

\begin{thm}[Energy theorem \cite{Helmholtz} (2017) ] $E(G)=\chi(G)$.
\label{2}
\end{thm}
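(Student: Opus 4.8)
The plan is to first produce an explicit formula for the Green function $g=L^{-1}$ and then sum its entries. For a face $x\in G$ let $W^+(x)=\{z\in G: x\subseteq z\}$ be the set of faces containing $x$, and for $A\subseteq G$ abbreviate $\chi(A)=\sum_{z\in A}\omega(z)$. I expect the correct formula to be
\[
g(x,y)=\omega(x)\,\omega(y)\,\chi\bigl(W^+(x)\cap W^+(y)\bigr),
\]
where $\chi(W^+(x)\cap W^+(y))$ is the signed count of the common over-faces of $x$ and $y$; note that this is automatically an integer, as it must be by Theorem \ref{1}. Verifying the formula reduces to checking that $L$ times the matrix on the right is the identity.

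That verification should rest on a single combinatorial identity. Because $\dim(x)=|x|-1$, the binomial theorem gives $\sum_{\emptyset\neq x\subseteq z}\omega(x)=1$ for every non-empty finite set $z$; I would record this first. From it one then deduces the localized version, for any two simplices $a$ and $z$,
\[
\sum_{\substack{\emptyset\neq b\subseteq z\\ b\cap a\neq\emptyset}}\omega(b)=[\,z\subseteq a\,],
\]
the point being that a subset $b$ of $z$ meets $a$ exactly when it meets $a\cap z$, after which inclusion–exclusion over $z\setminus a$ collapses the sum. Expanding $(Lg)(a,c)$ with the proposed formula, pulling out $\omega(c)$, and interchanging the order of summation so that the innermost sum runs over the faces $b$ with $b\subseteq z$ and $b\cap a\neq\emptyset$ (for each over-face $z\supseteq c$), the localized identity collapses that sum to $[\,z\subseteq a\,]$; the basic binomial identity then collapses the remaining sum $\sum_{z:\,c\subseteq z\subseteq a}\omega(z)$ to $\omega(c)\,[\,a=c\,]$. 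This yields $(Lg)(a,c)=[\,a=c\,]$, i.e.\ $Lg=I$, so the formula for $g$ holds.

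Granting the formula, the theorem drops out in one line. Writing $B$ for the $0$–$1$ matrix $B(x,z)=[\,x\subseteq z\,]$, $D=\mathrm{diag}(\omega(z))_{z\in G}$, and $s=D\mathbf 1$ for the vector of signs $s(z)=\omega(z)$, the formula above says precisely $g=DBDB^{\mathsf T}D$, so
\[
E(G)=\mathbf 1^{\mathsf T}g\,\mathbf 1=(B^{\mathsf T}s)^{\mathsf T}D\,(B^{\mathsf T}s).
\]
Since $(B^{\mathsf T}s)(z)=\sum_{\emptyset\neq x\subseteq z}\omega(x)=1$ by the binomial identity, we get $B^{\mathsf T}s=\mathbf 1$ and hence $E(G)=\mathbf 1^{\mathsf T}D\mathbf 1=\sum_{z\in G}\omega(z)=\chi(G)$. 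Unwound, this is just the double sum $E(G)=\sum_{z\in G}\omega(z)\bigl(\sum_{\emptyset\neq x\subseteq z}\omega(x)\bigr)^2=\sum_{z\in G}\omega(z)$.

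The main obstacle is the Green-function formula itself — more precisely, keeping the nested sums in the expansion of $Lg$ organized well enough that the localized identity $\sum_{b\subseteq z,\,b\cap a\neq\emptyset}\omega(b)=[\,z\subseteq a\,]$ can be applied cleanly. Once that identity is isolated, everything else, including the passage from $g$ to $E(G)=\chi(G)$, is routine sign bookkeeping. An inductive alternative, inserting one face at a time and tracking $E$, looks less attractive, since adding a single simplex perturbs many entries of $L^{-1}$ at once, whereas the closed form above sidesteps this.
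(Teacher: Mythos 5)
Your proof is correct, but it takes a genuinely different route from the one the paper uses for this theorem. The paper's proof establishes only the \emph{row sums} of $g$: it shows $Lk=\mathbf 1$ for the curvature $k(x)=\omega(x)(1-\chi(S(x)))$ via the Unit Ball lemma $d_{G'}(x)=\sum_{y\in B(x)}\chi(S^+(y))$, concludes $g\mathbf 1=k$, and then invokes the sphere Gauss--Bonnet formula $\sum_x k(x)=\chi(G)$, which in turn rests on Poincar\'e--Hopf and the join decomposition $S(x)=S^-(x)+S^+(x)$. You instead prove the full Green star formula $g(x,y)=\omega(x)\omega(y)\chi(W^+(x)\cap W^+(y))$ and collapse the double sum
$$ \sum_{x,y}g(x,y)=\sum_{z\in G}\omega(z)\Bigl(\sum_{\emptyset\neq x\subseteq z}\omega(x)\Bigr)^2=\sum_{z\in G}\omega(z)=\chi(G) $$
directly. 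Your key localized identity $\sum_{\emptyset\neq b\subseteq z,\,b\cap a\neq\emptyset}\omega(b)=[\,z\subseteq a\,]$ is, up to normalization by $\omega(x)\omega(u)$, exactly the complete-complex lemma the paper uses in its separate, later proof of the Green star formula, and your interchange of summation is legitimate because a simplicial complex containing $z$ contains every non-empty subset of $z$, so the inner sum really does range over all of $2^z\setminus\{\emptyset\}$. What your route buys is a proof by self-contained binomial combinatorics that bypasses Poincar\'e--Hopf, Gauss--Bonnet and the hyperbolic structure entirely, and delivers the stronger Green star formula as a by-product. What the paper's route buys is the interpretation of the potential $V(x)=\sum_y g(x,y)$ as a curvature, and an argument that transfers to discrete CW complexes and the strong ring $\mathcal{G}$, where the subset-closure property your interchange relies on is no longer available and must be replaced by the sphere decomposition; your argument as written is specific to genuine simplicial complexes.
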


\paragraph{}
Let $p(G)$ be the number of positive eigenvalues of $L$ and $n(G)$
the number of negative eigenvalues of $L$. Let $b(G)$ denote the
number of even dimensional simplices in $G$ and $f(G)$ the number of odd
dimensional simplices in $G$. If $f_k(G)$ is the number of elements in $G$
with cardinality $k+1$, then $(f_0,f_1, \cdots, f_d)$ is the {\bf f-vector}
of $G$ and $\chi(G)=\sum_{k \; {\rm even}} f_k - \sum_{k \; {\rm odd}} f_k
=b(G)-f(G)$. 

\begin{thm}[Hearing Euler characteristic \cite{ListeningCohomology}, (2018)]
We have $b(G)=p(G)$ and $f(G)=n(G)$. Therefore, $\chi(G)=p(G)-n(G)$. 
\end{thm}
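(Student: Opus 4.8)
The plan is to compute the inertia of the connection matrix $L$ directly, by building $G$ up one simplex at a time and tracking how the signature changes. Since $L$ is symmetric and, by the Unimodularity Theorem, has determinant $\phi(G)\in\{-1,1\}$, it has no zero eigenvalue, so $p(G)+n(G)$ equals the number $n$ of simplices, which is also $b(G)+f(G)$. Hence it will suffice to prove that the inertia of $L$ is $\big(b(G),f(G),0\big)$; the identity $\chi(G)=b(G)-f(G)=p(G)-n(G)$ then follows at once.

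First I would order the simplices as $x_1,\dots,x_n$ by non-decreasing cardinality. Then every initial segment $G_i=\{x_1,\dots,x_i\}$ is again a finite abstract simplicial complex, because all proper subsets of $x_i$, being of strictly smaller cardinality, already occur among $x_1,\dots,x_{i-1}$. Let $L_i$ be the connection matrix of $G_i$, so $L_n=L$ and $L_0$ is the empty matrix. With the simplices listed in this order, $L_i$ is obtained from $L_{i-1}$ by appending one row and column: there is a $0/1$ vector $v_i$ (with $j$-th entry $1$ iff $x_j\cap x_i\neq\emptyset$) such that
\[
 L_i=\begin{pmatrix} L_{i-1} & v_i \\ v_i^{\,T} & 1 \end{pmatrix},
\]
the bottom-right entry being $1$ because the simplex $x_i$ always meets itself.

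The key step will be a Schur-complement congruence. Since $L_{i-1}$ is invertible (again by the Unimodularity Theorem, now applied to the complex $G_{i-1}$), the congruence transformation $L_i\mapsto S^{T}L_iS$ with the block-triangular matrix $S=\left(\begin{smallmatrix} I & -L_{i-1}^{-1}v_i \\ 0 & 1 \end{smallmatrix}\right)$ turns $L_i$ into $L_{i-1}\oplus\big(1-v_i^{\,T}L_{i-1}^{-1}v_i\big)$. The scalar here equals $\det(L_i)/\det(L_{i-1})$, and by the Unimodularity Theorem this ratio is $\phi(G_i)/\phi(G_{i-1})=\omega(x_i)\in\{-1,1\}$. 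Therefore, by Sylvester's law of inertia, the inertia of $L_i$ is that of $L_{i-1}$ with a $+1$ appended when $\dim(x_i)$ is even and a $-1$ appended when $\dim(x_i)$ is odd. Iterating from $L_0$ to $L_n=L$, the number of positive eigenvalues equals the number of even-dimensional simplices and the number of negative eigenvalues equals the number of odd-dimensional simplices, i.e. $p(G)=b(G)$ and $n(G)=f(G)$.

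I do not expect a serious obstacle: the whole argument is a short induction once the Unimodularity Theorem is available, and the only point that needs care is that it must be invoked for \emph{every} truncation $G_i$, not merely for $G$ — this is exactly what forces the Schur scalar $1-v_i^{\,T}L_{i-1}^{-1}v_i$ to be a unit and pins its sign to $\omega(x_i)$, and it also supplies the invertibility of $L_{i-1}$ needed to form the Schur complement in the first place. An alternative route would be to deform $L$ continuously to the diagonal matrix $\mathrm{diag}(\omega(x))_{x\in G}$ through invertible symmetric matrices, but the discrete build-up is cleaner because it avoids exhibiting such a path.
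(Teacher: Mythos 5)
Your argument is correct, but it takes a genuinely different route from the one in the paper. The paper establishes the signature statement through the deformation in the section ``Proof of Theorem~(2)'': when a cell $x$ is attached along $A=S(x)$, the matrix $K(t)$ interpolates between $K(0)=L\oplus 1$ and $K(1)=K$, a Laplace expansion gives $\det K(t)=(1-t^{2}\chi(A))\det L$, and since $\chi(A)\in\{0,2\}$ either the determinant never vanishes (no eigenvalue crosses zero, so the appended eigenvalue stays positive) or it has a single simple root at $t=1/\sqrt{2}$ (exactly one eigenvalue crosses zero, so the count of negative eigenvalues goes up by one). You replace this homotopy by the Schur--complement congruence $S^{T}L_iS=L_{i-1}\oplus\bigl(1-v_i^{T}L_{i-1}^{-1}v_i\bigr)$ combined with Sylvester's law of inertia, and you pin the Schur scalar to $\omega(x_i)$ by invoking the Unimodularity Theorem for both truncations $G_{i-1}$ and $G_i$. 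Your version is purely algebraic: it needs no continuity or transversality reasoning to convert a simple zero of $\det K(t)$ into ``exactly one eigenvalue changes sign,'' and the cardinality ordering guarantees every truncation is again a complex, so the induction is clean. What it gives up is that it consumes Theorem~1 as a black box at every step (both for the invertibility of $L_{i-1}$ and for the value $\det(L_i)/\det(L_{i-1})=\omega(x_i)$), whereas the paper's deformation derives the determinant recursion directly from the minor identity $\det(L[y,x])=\omega(y)$ and is phrased for general discrete CW complexes, where the glue $A$ is a sphere rather than the boundary of a simplex. Both proofs are inductions over a cell-by-cell build-up, and both use the same reduction you state at the outset, namely that $p(G)+n(G)$ equals the number of cells because $L$ has trivial kernel.
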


\paragraph{}
It follows from $|{\rm det}(L)|=1$ that ${\rm tr}(\log(|L|))=0$ so that
$$  \chi(G) = \frac{2}{i \pi} {\rm tr}(\log(i L))  $$
if the branch ${\rm arg}(\log(z)) \in [0,2\pi)$ is chosen. 
This writes $\chi(G)$ as a {\bf logarithmic energy} of a spectral set 
$\sigma(i L)$ on the imaginary axes of the complex plane again reinforcing
that $\chi(G)$ is a type of energy. 

\paragraph{}
Let $W^+(x) = \{ y \in G, x \subset y \}$ denote the {\bf star} of $x$.
It is a set of sets and not necessarily a simplicial complex but still
has an Euler characteristic. The Green function entries are explicitly 
known:

\begin{thm}[Green star formula \cite{Helmholtz} (2017)]
The Green function entries are
$g(x,y) = \omega(x) \omega(y) \chi(W^+(x) \cap W^+(y))$. 
\end{thm}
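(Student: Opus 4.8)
The plan is to prove the formula by verifying directly that the matrix $h$ with entries $h(x,y) = \omega(x)\,\omega(y)\,\chi(W^+(x) \cap W^+(y))$ is a right inverse of $L$. Since $L$ is a finite square matrix which is invertible by Theorem \ref{1}, the identity $Lh = \mathbf{1}$ forces $h = g = L^{-1}$. (As a sanity check, $h$ is manifestly symmetric, matching the symmetry of $L$, and its entries are integers, matching the remark following Theorem \ref{1}.)

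First I would rewrite the Euler characteristic combinatorially. Since $W^+(x) \cap W^+(y) = \{ w \in G : x \subseteq w \text{ and } y \subseteq w \} = \{ w \in G : x \cup y \subseteq w\}$, and the Euler characteristic of a set of faces is the sum of $\omega$ over its members, we get
\[
  \chi(W^+(x) \cap W^+(y)) = \sum_{w \in G,\ x \cup y \subseteq w} \omega(w) .
\]
Substituting this and using that $L(x,z) = 1$ exactly when $z \cap x \neq \emptyset$, the $(x,y)$ entry of $Lh$ becomes
\[
  (Lh)(x,y) = \omega(y) \sum_{\substack{z \in G \\ z \cap x \neq \emptyset}} \omega(z) \sum_{\substack{w \in G \\ z \cup y \subseteq w}} \omega(w) .
\]

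Next I would interchange the two sums, summing first over $w$ and then over the non-empty $z$ with $z \subseteq w$ and $z \cap x \neq \emptyset$; the key structural input here is that $G$ is closed under taking non-empty subsets, so every non-empty $z \subseteq w$ automatically lies in $G$. This turns the expression into $\omega(y) \sum_{w \in G,\ y \subseteq w} \omega(w)\, S_w$ with $S_w = \sum_{\emptyset \neq z \subseteq w,\ z \cap x \neq \emptyset} \omega(z)$. The inner sum $S_w$ is evaluated by the elementary identity $\sum_{z \subseteq T} (-1)^{|z|} = 0$ for $T \neq \emptyset$: it gives $\sum_{\emptyset \neq z \subseteq w} \omega(z) = 1$, and subtracting the contribution of the subsets disjoint from $x$ (the non-empty subsets of $w \setminus x$) yields $S_w = 1$ if $w \subseteq x$ and $S_w = 0$ otherwise.

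What then remains is $(Lh)(x,y) = \omega(y) \sum_{w \in G,\ y \subseteq w \subseteq x} \omega(w)$. Writing each such $w$ as $w = y \sqcup S$ with $S \subseteq x \setminus y$ and applying the same binomial identity once more collapses the sum to $\omega(y)\cdot\omega(y) = 1$ when $x = y$ and to $0$ when $x \neq y$, i.e. $(Lh)(x,y) = \delta_{xy}$, as wanted. The computation is short; the points that require care are the bookkeeping of which index sets are forced to lie in $G$ (downward closure does all the work), the non-emptiness constraint inherited from ``$z \in G$'', and the small-cardinality cases where one of the inner sums is empty. If a more structural phrasing is preferred, the same content repackages as the factorization $h = D\, U\, D\, U^{\top} D$, where $U(a,b) = 1$ iff $a \subseteq b$ and $D = \mathrm{diag}(\omega(x))$, together with the observation that $L$ is the inverse of that product; but the line-by-line summation seems the most transparent route and is how I would present it.
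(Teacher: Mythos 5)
Your proposal is correct and follows essentially the same route as the paper: both verify $Lh=\mathbf{1}$ entrywise by expanding $\chi(W^+(z)\cap W^+(y))=\sum_{w\supseteq z\cup y}\omega(w)$, interchanging the order of summation, and observing that the inner alternating sum $\sum_{\emptyset\neq z\subseteq w,\,z\cap x\neq\emptyset}\omega(z)$ vanishes unless $w\subseteq x$ (this is exactly the paper's lemma on complete complexes, which you re-derive via the binomial identity). The only cosmetic difference is that you treat the diagonal and off-diagonal entries uniformly, while the paper separates the two cases.
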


\paragraph{}
The potential energies are {\bf local}, of bounded range, unlike in Euclidean 
spaces where potentials are long range. Simplicial complexes have a natural
hyperbolic structure for which the star is the unstable
manifold of a gradient vector field of the dimension functional. The stable
manifold $W^-(x) = \{ y \in G, y \subset x \}$ is the simplicial complex 
generated by $x$ and $\chi(W^-(x) \cap W^-(y))=L(x,y)$. 
As $\omega(x) = \chi(W^-(x))$, both $L$ and $g$ have entries given as 
the Euler characteristic of ``homoclinic" or ``heteroclinic points" of
a hyperbolic dynamical system.

\paragraph{}
The disjoint union of complexes defines an additive monoid  which can be
completed to become a group. The group operation $G+H$ is represented by the 
direct product of matrices $L(G) \oplus L(H)$. An element in the group is naturally
described by its connection Laplacian $L$ postulating $L(-G)=-L(G)$. 
The Cartesian product of two simplicial 
complexes is not a simplicial complex but it carries a natural exterior derivative $d$
defining a Hodge Laplacian $H(G)=(d+d^*)^2$ which is a direct sum of form-Laplacians $L_k$
for which by Hodge, the nullity of the kernel are the Betti numbers $b_k={\rm ker}(L_K)$ and
for which the K\"unneth formula holds so that on the full ring $\mathcal{G}$ the Poincar\'e map 
$p(G) = b_0 t + b_1 t^2 + \cdots +b_d t^d$ is a ring homomorphism to polynomials. 
The spectrum of the Hodge Laplacian is not compatible in general with multiplication. 
It is however with the connection Laplacian: 

\begin{thm}[Tensor algebra representation \cite{StrongRing}, (2017)]
The map $G \to L(G)$ is a representation of the ring $\mathcal{G}$ in a tensor
algebra of finite dimensional invertible matrices.
\label{representation}
\end{thm}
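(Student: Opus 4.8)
The plan is to verify directly that $L$ intertwines the two ring operations on $\mathcal{G}$, carrying the additive operation (disjoint union, extended to the Grothendieck group) to the direct sum $\oplus$ and the multiplicative operation (the Cartesian/strong product, whose cells are products of cells) to the Kronecker product $\otimes$, and then to invoke Theorem \ref{1} to see that the image lands in the invertible matrices. Concretely I would take as ambient object the ring whose elements are finite-dimensional square integer matrices considered up to simultaneous permutation conjugation, with $A\oplus B$ as addition and $A\otimes B$ as multiplication; its unit is the $1\times 1$ matrix $[1]$, which equals $L(\mathbf{1})$ for the one-point complex $\mathbf{1}=\{\{v\}\}$.

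For the additive part: if $G$ and $H$ are disjoint, then no simplex of $G$ meets a simplex of $H$, so ordering the simplices of $G\sqcup H$ with those of $G$ first makes $L(G\sqcup H)$ block diagonal with blocks $L(G)$ and $L(H)$, i.e. $L(G\sqcup H)=L(G)\oplus L(H)$. The monoid of finite complexes under $\sqcup$ is free commutative on the connected complexes, hence cancellative, so it embeds in its Grothendieck group $\mathcal{G}$; I extend $L$ to formal differences by $L(G-H):=L(G)\oplus(-L(H))$, which is well defined, additive, and still takes values in invertible matrices since $\det(-L(H))=(-1)^{n}\phi(H)=\pm 1$ by Theorem \ref{1}.

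For the multiplicative part the one combinatorial input I need is the description of the cells of the Cartesian product $G\times H$ as products $x\times y$ of a cell of $G$ with a cell of $H$, together with the intersection criterion that $(x\times y)\cap(x'\times y')\neq\emptyset$ exactly when $x\cap x'\neq\emptyset$ and $y\cap y'\neq\emptyset$. Granting this and ordering the product cells lexicographically, one gets $L(G\times H)\big((x,y),(x',y')\big)=L(G)(x,x')\,L(H)(y,y')=\big(L(G)\otimes L(H)\big)\big((x,y),(x',y')\big)$, hence $L(G\times H)=L(G)\otimes L(H)$. The mixed-product property of the Kronecker product, $A\otimes(B\oplus C)\cong(A\otimes B)\oplus(A\otimes C)$ and $(A\oplus B)\otimes C\cong(A\otimes C)\oplus(B\otimes C)$, then matches the distributive law $G\times(H+K)=G\times H+G\times K$ in $\mathcal{G}$; associativity of $\otimes$ matches associativity of the product; and $L(\mathbf{1})=[1]$ is a two-sided unit on both sides. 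Extending $L$ to all of $\mathcal{G}$ by bilinearity gives the ring homomorphism. Finally, Theorem \ref{1} gives $\det L(G)=\phi(G)\in\{-1,1\}$, and since $\det(A\otimes B)=(\det A)^{l}(\det B)^{k}$ when $A,B$ have sizes $k,l$, every matrix in the image is unimodular, so it lands in the tensor algebra of finite-dimensional invertible matrices.

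I expect the main obstacle to be bookkeeping rather than depth. One must first pin down the product ring structure on $\mathcal{G}$ — the Cartesian product of two simplicial complexes is not a simplicial complex, so $G\times H$ lives in a larger category of discrete CW complexes on which the connection matrix $L$ must be defined and checked to agree with the simplicial $L$ on honest complexes — and then verify the cell-intersection criterion above and keep track of index orderings so that the displayed identities are literal equalities of matrices (rather than merely up to permutation similarity). Once the product and its intersection pattern are set up correctly, the remainder is the routine algebra of $\oplus$ and $\otimes$ combined with the unimodularity theorem.
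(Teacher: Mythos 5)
Your proposal is correct and follows essentially the same route as the paper: the paper likewise establishes $L(G\sqcup H)=L(G)\oplus L(H)$, proves $L(G\times H)=L(G)\otimes L(H)$ by ordering the product cells so that the intersection criterion $(x\times y)\cap(x'\times y')\neq\emptyset \iff x\cap x'\neq\emptyset$ and $y\cap y'\neq\emptyset$ yields the Kronecker block structure (equivalently $(G\times H)'=G'\osquare H'$), and invokes the unimodularity theorem for invertibility. Your added care about the CW-complex setting for products and the sign convention $L(-G)=-L(G)$ matches the paper's own remarks.
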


\paragraph{}
The connection Laplacian $L(G)$ acts on the same Hilbert space than $H(G)$. 
The multiplication of complexes defines a strong product for the corresponding
connection graphs and the {\bf tensor product} of the connection Laplacians $L(G)$. 
It follows that when adding two simplicial complexes, the spectrum is the union of 
the spectra as for ``independent quantum mechanical processes". When 
multiplying two complexes, then the spectra of $L$ multiply and are mathematically
described in the same way than multi-particle states appear in physics. 

\section{Examples}

\paragraph{}
There are various ways to build simplicial complexes: 
any finite set of finite non-empty sets $A$ {\bf generates} a complex 
$G = \{ x \subset y \; | \; x \neq \emptyset, y \in A \}$.
Given a complex $G$, the {\bf $k$-skeleton} of $G$ is the set of subsets of $G$ of dimension $\leq k$. 
A finite simple graph $(V,E)$ generates the {\bf Whitney complex}
$G = \{ x \subset V \; | \; \forall a,b \in x, (a,b) \in E\}$, where the simplices are the vertex 
sets $V(K)$ of complete subgraphs $K$ of $(V,E)$. The Whitney complex is also known under the name
{\bf clique complex}. Its dual is the {\bf independence complex}, in which the simplices are 
the independent sub-sets of $V$. An other example of a complex defined on a graph is the 
{\bf graphic matroid} $G = \{ x \subset E \; | \; x$ generates a forest in $(V,E) \}$.

\paragraph{}
{\bf Example 1)} The set of sets $A=\{ (1,2,3),(2,3,4) \}$ generates
the complex $G = \{ (1),(2),(3),(4)$, $(1,4),(1,3),(2,3),(3,4)$, $(1,2,3),(2,3,4)\}$
which is the Whitney complex of the diamond graph. The $f$-vector is $(4,5,2)$, the
Euler characteristic $\chi(G)=4-5+2=1$, the Fermi characteristic $(-1)^5=-1$ which 
agrees with the determinant of $L$ and the determinant of $g=L^{-1}$
\begin{tiny}
$$ g=\left[
                  \begin{array}{ccccccccccc}
                   0 & 0 & 0 & 0 & 0 & 0 & -1 & 0 & 0 & 1 & 0 \\
                   0 & 0 & 1 & 0 & 0 & -1 & -1 & 0 & -1 & 1 & 1 \\
                   0 & 1 & 0 & 0 & -1 & 0 & -1 & -1 & 0 & 1 & 1 \\
                   0 & 0 & 0 & 0 & 0 & 0 & -1 & 0 & 0 & 0 & 1 \\
                   0 & 0 & -1 & 0 & 0 & 1 & 1 & 0 & 0 & -1 & 0 \\
                   0 & -1 & 0 & 0 & 1 & 0 & 1 & 0 & 0 & -1 & 0 \\
                   -1 & -1 & -1 & -1 & 1 & 1 & 1 & 1 & 1 & -1 & -1 \\
                   0 & 0 & -1 & 0 & 0 & 0 & 1 & 0 & 1 & 0 & -1 \\
                   0 & -1 & 0 & 0 & 0 & 0 & 1 & 1 & 0 & 0 & -1 \\
                   1 & 1 & 1 & 0 & -1 & -1 & -1 & 0 & 0 & 1 & 0 \\
                   0 & 1 & 1 & 1 & 0 & 0 & -1 & -1 & -1 & 0 & 1 \\
                  \end{array}
                  \right] \; .  $$
\end{tiny}

\paragraph{}
{\bf Example 2)} 
Not every complex is a Whitney complex. The complex generated by 
$A=\{ \{1,2\}$, $\{2,3\}$, $\{3,1\} \}$ is 
$$  G=C_3=\{ \{1\},\{2\},\{3\},\{1,2\},\{2,3\},\{3,1\} \} \, $$
which is the $1$-skeleton complex $C_3$ of of the triangle 
$K_3$. While $K_3$ is the Whitney complex of a graph, the complex $C_3$ is not.
Its $f$-vector is $(3,3)$ with Euler characteristic $3-3=0$ and Fermi characteristic
$(-1)^3=-1$. 
\begin{tiny}
$$ L=\left[
                  \begin{array}{cccccc}
                   1 & 0 & 0 & 1 & 1 & 0 \\
                   0 & 1 & 0 & 1 & 0 & 1 \\
                   0 & 0 & 1 & 0 & 1 & 1 \\
                   1 & 1 & 0 & 1 & 1 & 1 \\
                   1 & 0 & 1 & 1 & 1 & 1 \\
                   0 & 1 & 1 & 1 & 1 & 1 \\
                  \end{array}
                  \right], g = \left[
                  \begin{array}{cccccc}
                   -1 & -1 & -1 & 1 & 1 & 0 \\
                   -1 & -1 & -1 & 1 & 0 & 1 \\
                   -1 & -1 & -1 & 0 & 1 & 1 \\
                   1 & 1 & 0 & -1 & 0 & 0 \\
                   1 & 0 & 1 & 0 & -1 & 0 \\
                   0 & 1 & 1 & 0 & 0 & -1 \\
                  \end{array}
                  \right] \; . $$
\end{tiny}
We check ${\rm det}(L)=-1$ and $\sum_{x,y \in G} g(x,y)=0$. 

{\bf Example 3)} If $G=\{ \{1,2\},\{1\},\{2\} \},
H=\{ \{1,2\},\{2,3\},\{1\},\{2\},\{3\} \}$ then 
$$ L(G) = \left[ \begin{array}{ccc} 1 & 1 & 1 \\ 1 & 1 & 0 \\ 1 & 0 & 1 \\ \end{array} \right],
   L(H) = \left[ \begin{array}{ccccc} 1 & 1 & 1 & 1 & 0 \\ 1 & 1 & 0 & 1 & 1 \\
           1 & 0 & 1 & 0 & 0 \\ 1 & 1 & 0 & 1 & 0 \\ 0 & 1 & 0 & 0 & 1 \\ \end{array} \right] \; , $$
with ${\rm det}(L(G))=-1$ and ${\rm det}(L(H))=1$. 
The eigenvalues of $L(G)$ are $\left\{1+\sqrt{2},1,1-\sqrt{2}\right\}$, the eigenvalues of $L(H)$
are 
$$ \sigma(L(H))=\left\{\frac{1}{2} \left(3+\sqrt{13}\right),\frac{1}{2}
   \left(1+\sqrt{5}\right),1,\frac{1}{2} \left(1-\sqrt{5}\right),\frac{1}{2}
   \left(3-\sqrt{13}\right)\right\} \; . $$
The product complex $G \times H$ is the tensor product of $L(G)$ and $L(H)$ which 
\begin{tiny}
$$ \left[
\begin{array}{ccccccccccccccc}
 1 & 1 & 1 & 1 & 0 & 1 & 1 & 1 & 1 & 0 & 1 & 1 & 1 & 1 & 0 \\
 1 & 1 & 0 & 1 & 1 & 1 & 1 & 0 & 1 & 1 & 1 & 1 & 0 & 1 & 1 \\
 1 & 0 & 1 & 0 & 0 & 1 & 0 & 1 & 0 & 0 & 1 & 0 & 1 & 0 & 0 \\
 1 & 1 & 0 & 1 & 0 & 1 & 1 & 0 & 1 & 0 & 1 & 1 & 0 & 1 & 0 \\
 0 & 1 & 0 & 0 & 1 & 0 & 1 & 0 & 0 & 1 & 0 & 1 & 0 & 0 & 1 \\
 1 & 1 & 1 & 1 & 0 & 1 & 1 & 1 & 1 & 0 & 0 & 0 & 0 & 0 & 0 \\
 1 & 1 & 0 & 1 & 1 & 1 & 1 & 0 & 1 & 1 & 0 & 0 & 0 & 0 & 0 \\
 1 & 0 & 1 & 0 & 0 & 1 & 0 & 1 & 0 & 0 & 0 & 0 & 0 & 0 & 0 \\
 1 & 1 & 0 & 1 & 0 & 1 & 1 & 0 & 1 & 0 & 0 & 0 & 0 & 0 & 0 \\
 0 & 1 & 0 & 0 & 1 & 0 & 1 & 0 & 0 & 1 & 0 & 0 & 0 & 0 & 0 \\
 1 & 1 & 1 & 1 & 0 & 0 & 0 & 0 & 0 & 0 & 1 & 1 & 1 & 1 & 0 \\
 1 & 1 & 0 & 1 & 1 & 0 & 0 & 0 & 0 & 0 & 1 & 1 & 0 & 1 & 1 \\
 1 & 0 & 1 & 0 & 0 & 0 & 0 & 0 & 0 & 0 & 1 & 0 & 1 & 0 & 0 \\
 1 & 1 & 0 & 1 & 0 & 0 & 0 & 0 & 0 & 0 & 1 & 1 & 0 & 1 & 0 \\
 0 & 1 & 0 & 0 & 1 & 0 & 0 & 0 & 0 & 0 & 0 & 1 & 0 & 0 & 1 \\
\end{array}
\right] $$
\end{tiny}
The tensor product of $L(H)$ with $L(G)$ is conjugated to it 
$$ \left[
\begin{array}{ccccccccccccccc}
 1 & 1 & 1 & 1 & 1 & 1 & 1 & 1 & 1 & 1 & 1 & 1 & 0 & 0 & 0 \\
 1 & 1 & 0 & 1 & 1 & 0 & 1 & 1 & 0 & 1 & 1 & 0 & 0 & 0 & 0 \\
 1 & 0 & 1 & 1 & 0 & 1 & 1 & 0 & 1 & 1 & 0 & 1 & 0 & 0 & 0 \\
 1 & 1 & 1 & 1 & 1 & 1 & 0 & 0 & 0 & 1 & 1 & 1 & 1 & 1 & 1 \\
 1 & 1 & 0 & 1 & 1 & 0 & 0 & 0 & 0 & 1 & 1 & 0 & 1 & 1 & 0 \\
 1 & 0 & 1 & 1 & 0 & 1 & 0 & 0 & 0 & 1 & 0 & 1 & 1 & 0 & 1 \\
 1 & 1 & 1 & 0 & 0 & 0 & 1 & 1 & 1 & 0 & 0 & 0 & 0 & 0 & 0 \\
 1 & 1 & 0 & 0 & 0 & 0 & 1 & 1 & 0 & 0 & 0 & 0 & 0 & 0 & 0 \\
 1 & 0 & 1 & 0 & 0 & 0 & 1 & 0 & 1 & 0 & 0 & 0 & 0 & 0 & 0 \\
 1 & 1 & 1 & 1 & 1 & 1 & 0 & 0 & 0 & 1 & 1 & 1 & 0 & 0 & 0 \\
 1 & 1 & 0 & 1 & 1 & 0 & 0 & 0 & 0 & 1 & 1 & 0 & 0 & 0 & 0 \\
 1 & 0 & 1 & 1 & 0 & 1 & 0 & 0 & 0 & 1 & 0 & 1 & 0 & 0 & 0 \\
 0 & 0 & 0 & 1 & 1 & 1 & 0 & 0 & 0 & 0 & 0 & 0 & 1 & 1 & 1 \\
 0 & 0 & 0 & 1 & 1 & 0 & 0 & 0 & 0 & 0 & 0 & 0 & 1 & 1 & 0 \\
 0 & 0 & 0 & 1 & 0 & 1 & 0 & 0 & 0 & 0 & 0 & 0 & 1 & 0 & 1 \\
\end{array}
\right]  \; .  $$
The eigenvalues of the tensor product are the products 
$\lambda_j \mu_k$, where $\lambda_j \in \sigma(L(G))$ and
$\mu_j  \in \sigma(L(H))$. 

\paragraph{}
{\bf Example 4)} Here is an example of a graphic matroid. 
If $(V,E)=(\{a,b,c,d,e\},\{(14),(12)$, $(13),(23)$,$(34)\})$
is the diamond graph, then 
$\{(a,b,c)$, $(a,b,d)$,$(a,b,e)$, $(a,c,d)$,$(a,d,e)$, $(b,c,e)$, $(b,d,e)$, $(c,d,e)\}$ 
is a list of spanning trees and $G=\{(a),(b),(c)$, $(d),(e),(a,b)$, $(a,c)$, 
$(a,d)$, $(a,e)$, $(b,c)$, $(b,d)$, $(b,e)$,
$(c,d)$, $(c,e)$, $(d,e)$, $(a,b,c)$, 
$(a,b,d)$, $(a,b,e)$, $(a,c,d)$, $(a,d,e)$, $(b,c,e)$, 
$(b,d,e)$, $(c,d,e)\}$ is the graphic matroid of the diamond graph. 
It has the $f$-vector $(5,10,8)$ and Euler characteristic $G$. 
The $(23 \times 23)$-matrix $L$ has the inverse whose entries add up to $3$.

\paragraph{}
{\bf Example 5)} If $G=\{ \{1,2\},\{1\},\{2\} \}$ 
is the set of non-empty subsets of a two 
point set, then $n=3,b=2,f=1$ and $\chi(G)=2-1=1$. The matrices $L$ and $g$ are
$$ L=\left[ \begin{array}{ccc} 1 & 1 & 1 \\ 1 & 1 & 0 \\ 1 & 0 & 1 \\ \end{array} \right],
   g=\left[ \begin{array}{ccc} -1 & 1 & 1 \\ 1 & 0 & -1 \\ 1 & -1 & 0 \\ \end{array} \right] \; . $$
We see that $n=3,b=2,f=1$ and $\chi(G)=1, \phi(G)=-1$. We check $\sum_{x,y} g(x,y)=1$.
The is also $\chi(G)=b-f$.  The Fermi characteristic $\phi(G) = 1^b (-1)^f=(-1)^f =-1$
agrees with ${\rm det}(L)=-1$. 

\paragraph{}
{\bf Example 6)} If $G$ is the diamond graph complex 
generated by the set $A=\{ (1,2,3),(2,3,4) \}$, then
the connection matrix and its inverse are (using $a=-1$ for 
typographical reasons):
\begin{tiny}
$$ L=\left[\begin{array}{ccccccccccc}
1&0&0&0&1&1&0&0&0&1&0\\
0&1&0&0&1&0&1&1&0&1&1\\
0&0&1&0&0&1&1&0&1&1&1\\
0&0&0&1&0&0&0&1&1&0&1\\
1&1&0&0&1&1&1&1&0&1&1\\
1&0&1&0&1&1&1&0&1&1&1\\
0&1&1&0&1&1&1&1&1&1&1\\
0&1&0&1&1&0&1&1&1&1&1\\
0&0&1&1&0&1&1&1&1&1&1\\
1&1&1&0&1&1&1&1&1&1&1\\
0&1&1&1&1&1&1&1&1&1&1\\ \end{array}\right] g=\left[
                  \begin{array}{ccccccccccc}
0&0&0&0&0&0&a&0&0&1&0\\
0&0&1&0&0&a&a&0&a&1&1\\
0&1&0&0&a&0&a&a&0&1&1\\
0&0&0&0&0&0&a&0&0&0&1\\
0&0&a&0&0&1&1&0&0&a&0\\
0&a&0&0&1&0&1&0&0&a&0\\
a&a&a&a&1&1&1&1&1&a&a\\
0&0&a&0&0&0&1&0&1&0&a\\
0&a&0&0&0&0&1&1&0&0&a\\
1&1&1&0&a&a&a&0&0&1&0\\
0&1&1&1&0&0&a&a&a&0&1\\ \end{array} \right] \; $$
\end{tiny}
The entries of $g$ add up to $1$ which is the Euler characteristic $\chi(G)$ of $G$. 

\paragraph{}
{\bf Example 7)} The set $A=\{ (1,2),(2,3),(3,1) \}$ generates a $1$-dimensional complex 
which is not the Whitney complex of a graph. It is the smallest $1$-dimensional sphere in
the category of simplicial complexes. We have $n=6,b=3,f=3$ and $\chi(G)=3-3=0$. 
The matrices are \\
$$ L=\left[ \begin{array}{cccccc}
                   1 & 0 & 0 & 1 & 0 & 1 \\
                   0 & 1 & 0 & 1 & 1 & 0 \\
                   0 & 0 & 1 & 0 & 1 & 1 \\
                   1 & 1 & 0 & 1 & 1 & 1 \\
                   0 & 1 & 1 & 1 & 1 & 1 \\
                   1 & 0 & 1 & 1 & 1 & 1 \\
                  \end{array} \right], 
 g=\left[ \begin{array}{cccccc}
                   -1 & -1 & -1 & 1 & 0 & 1 \\
                   -1 & -1 & -1 & 1 & 1 & 0 \\
                   -1 & -1 & -1 & 0 & 1 & 1 \\
                   1 & 1 & 0 & -1 & 0 & 0 \\
                   0 & 1 & 1 & 0 & -1 & 0 \\
                   1 & 0 & 1 & 0 & 0 & -1 \\
                  \end{array} \right] \; . $$

\section{Poincar\'e-Hopf} 

\paragraph{}
Every simplicial complex $G$ defines a finite simple graph $G_1=(V,E)$, where $V$ consists
of the sets of $G$ and where two vertices in $V$ are connected by an edge if one is
contained in the other as an element in $G$. The
Whitney complex of this graph is called the {\bf Barycentric refinement} $G_1$
of $G$. One can define the Barycentric refinement also as the subset $G_1$ of the power set $2^G$,
where each simplex $A \in G_1$ has the property that if $x,y \in A$
then either $x \subset y$ or $y \subset x$, where the subset relation refers to the elements $x,y$ 
of $G_1$ as subsets of $G$.
 
\paragraph{}
A function $f:G \to \RR$ defines a function on the vertex set of the Barycentric 
refinement $G_1$. Let $S(x)$ denote the {\bf unit sphere} of $x$: 
it consists of all simplices $y$ in $G$ which are either strictly 
contained in $x$ or all simplices $y$ which strictly contain $x$. The set of sets
$S(x)$ is not a simplicial complex, but it defines a subgraph of $G_1$, which carries a
Whitney complex which is a simplicial complex. The following Poincar\'e-Hopf theorem applies
for general graphs $(V,E)$ and not only for graphs $G_1$ defined by simplicial complexes. 

\paragraph{}
A function $f:G \to \mathbb{R}$ is {\bf locally injective} if $f(x) \neq f(y)$ for any in 
$G_1$ connected pair $x,y$ in $G_1$. A locally injective function is also called a 
{\bf coloring} of $G_1$ meaning that every set $\{ f = c \}$ is independent.
Define the {\bf index} $i_f(x) = 1-\chi(S^-_f(x))$, where $S^-_f(x)$ 
is the sub-complex of all $y \in S(x) \subset G_1$ with $y \subset x$. 

\begin{lemma}[Poincar\'e-Hopf, \cite{poincarehopf} 2012]  
$\chi(G) = \sum_x i_f(x)$ for a locally injective $f$.
\end{lemma}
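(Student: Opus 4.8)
The statement is a purely combinatorial Poincaré–Hopf formula: given a locally injective $f:G\to\RR$, the Euler characteristic is recovered as a sum of local indices $i_f(x)=1-\chi(S^-_f(x))$. The plan is to prove it by \emph{induction on the number of vertices of the graph} $G_1$ (equivalently on $|G|$), but it is cleaner to reformulate the claim in a way that does not mention simplicial complexes at all: for a finite simple graph with a coloring (locally injective function) $f$ on its vertex set, $\chi(\text{Whitney complex}) = \sum_v \bigl(1 - \chi(S^-_f(v))\bigr)$, where $S^-_f(v)$ is the Whitney complex on the subgraph induced by those neighbors $w$ of $v$ with $f(w)<f(v)$. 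I would state and prove this graph version, since the excerpt explicitly notes the result "applies for general graphs."

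\textbf{Key steps.} First I would set up a \emph{sweeping} argument. Order the vertices $v_1,\dots,v_n$ so that $f(v_1)<f(v_2)<\cdots<f(v_n)$ (possible since $f$ is injective on each edge, but to get a total order I simply pick any total order refining the values; local injectivity guarantees adjacent vertices have distinct values, which is all that is used). Let $G^{(k)}$ be the subgraph induced on $\{v_1,\dots,v_k\}$ and let $A_k$ be its Whitney complex. The core claim is the telescoping identity
$$ \chi(A_k) - \chi(A_{k-1}) = 1 - \chi\bigl(S^-_f(v_k)\bigr) = i_f(v_k), $$
which, summed over $k=1,\dots,n$ with $\chi(A_0)=0$, yields $\chi(G) = \sum_x i_f(x)$. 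Second, I would prove this identity: passing from $G^{(k-1)}$ to $G^{(k)}$ adds the vertex $v_k$ together with all simplices of $A_k$ that contain $v_k$. Such a simplex is $\{v_k\}\cup\sigma$ where $\sigma$ is a (possibly empty) face of the induced subgraph on the lower neighbors of $v_k$ — that is, $\sigma \in S^-_f(v_k)$ or $\sigma=\emptyset$. So the newly added faces are in bijection with faces of $S^-_f(v_k)$ together with the singleton $\{v_k\}$ itself, with dimension shifted by one. Third, a dimension-shift bookkeeping: $\omega(\{v_k\}\cup\sigma) = -\omega(\sigma)$, so the new faces contribute $\omega(\{v_k\}) + \sum_{\sigma\in S^-_f(v_k)}(-\omega(\sigma)) = 1 - \chi(S^-_f(v_k))$ to the Euler characteristic. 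This is exactly $i_f(v_k)$, completing the telescoping.

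\textbf{Main obstacle.} The only delicate point is the structural claim in step two: that every face of $A_k$ containing $v_k$ has the form $\{v_k\}\cup\sigma$ with $\sigma$ a clique among the \emph{strictly lower} neighbors of $v_k$. The inclusion "$\supseteq$" is clear. For "$\subseteq$", one needs that any clique $\tau\ni v_k$ in $G^{(k)}$ consists of $v_k$ and vertices adjacent to $v_k$ with index $<k$; adjacency to $v_k$ is clique membership, and index $<k$ holds because $\tau\subseteq\{v_1,\dots,v_k\}$. The subtlety is purely that we must use the chosen vertex order — we do \emph{not} need the lower neighbors to be ordered compatibly with anything beyond membership in $\{v_1,\dots,v_{k-1}\}$ — and that the definition of $S^-_f(v_k)$ via the barycentric/unit-sphere picture in the excerpt agrees with "Whitney complex on the lower-neighbor subgraph." I would include one short paragraph reconciling the excerpt's definition ($S^-_f(x)$ = subcomplex of $y\in S(x)$ with $y\subset x$, viewed inside $G_1$) with this graph-level description, since the proof mechanics all live at the graph level. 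Everything else is routine telescoping and the sign identity $\omega(\{v\}\cup\sigma)=-\omega(\sigma)$.
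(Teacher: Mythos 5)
Your proof is correct and is essentially the paper's argument: both proceed by induction on the number of vertices, processing them in the order determined by $f$ so that each vertex is a local maximum of $f$ at the moment it is handled and contributes exactly $i_f(v)=1-\chi(S^-_f(v))$ to the Euler characteristic. The only (minor) difference is in how that increment is justified — you enumerate the newly created simplices $\{v\}\cup\sigma$ explicitly and use the sign flip $\omega(\{v\}\cup\sigma)=-\omega(\sigma)$, whereas the paper invokes the valuation identity $\chi(G)=\chi(H)+\chi(B(v))-\chi(S(v))$ together with $\chi(B(v))=1$; your counting amounts to a direct proof of that same identity.
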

\begin{proof}
The formula holds in general for any finite simple graph $G=(V,E)$ with 
locally injective function $f$ defined on $V$. To prove it, use induction with respect to the
number $n$ of vertices. Take a vertex $v \in G$ for which $f$ is locally maximal. 
Let $H$ be the subgraph with $v$ and connections taken away. Since $\chi$
is a valuation, we have $\chi(G) = \chi(H) + \chi(B(x)) - \chi(S(x))$. 
Because $\chi(B(x))=1$ and $\chi(S(x)) = \chi(S_f^-(x))$ and by the
induction assumption $\chi(H) = \sum_y i_f(y)$ we have 
$\chi(G) = \sum_{y \neq x} i_f(y) + i_f(x)$. 
We have used that that in a graph, for any vertex $v$ the unit ball 
$B(v)$ has Euler characteristic $1$. 
\end{proof}

\paragraph{}
If $G_1$ is the Barycentric refinement of a complex $G$, one can look at
the function $f(x)={\rm dim}(x)$. It is a coloring with a minimal number of colors
as the chromatic number is equal to the clique number. 
The set $S^-(x)$ is now a sphere complex as it is the boundary of the simplex $x$.
Its Euler characteristic is either $0$ or $2$. 
The set of sets $S^+(x)=S(x) \setminus S^-(x)$ is the {\bf star} of $x$. It is not 
a simplicial complex in general. 

\paragraph{}
In the case $f(x)={\rm dim}(x)$, we have $i_f(x) =\omega(x)$ as it is the genus of
the sphere complex $S^-(x)$. Poincar\'e-Hopf
now tells that $\chi(G_1) = \chi(G)$. The fact that Euler characteristic is a combinatorial
invariant follows also from the explicit relation between the $f$-vectors of $G$ and
$G_1$. The $f$-vector gets multiplied with the Barycentric refinement operator $A=i! S(j,i)$,
where $S(j,i)$ are the {\bf Stirling numbers of the second kind}. As the transpose $A^T$ has a 
unique eigenvector $(1,-1,1,-1,\dots)$ with eigenvalue $1$, the Euler characteristic is the 
only valuation (linear functional on $f$-vectors) with the property that it is invariant under 
Barycentric refinements. Poincar\'e-Hopf confirms in particular that $\chi$ is invariant 
under refinement.

\paragraph{}
For a generalization in which the Euler characteristic is replaced by the $f$-function 
$f_G(t) = 1+f_0 t + f_1 t^2 + \cdots f_d t^{d+1}$, see \cite{parametrizedpoincarehopf}. 
Given a locally injective function $g$ on the vertex set of a graph, then 
one has $f_G(t) = 1+t \sum_{x \in V} f_{S_g(x)}(t)$. 

\section{Gauss-Bonnet}

\paragraph{}
If $\Omega$ is the set of locally injective functions on $G$ and $\A$ a $\sigma$-algebra
on $\Omega$ and $P$ a probability measure on the measure space $(\Omega,\A)$ we have a probability 
space $(\Omega,\A,P)$. The {\bf curvature} defined by this probability space is 
defined as the expectation $K(x)={\rm E}[i_f(x)]$. It obviously satisfies the Gauss-Bonnet 
formula as it is an average of Poincar\'e-Hopf theorems:

\begin{coro}[Gauss-Bonnet]
$\chi(G) = \sum_{x \in G} K(x)$. 
\end{coro}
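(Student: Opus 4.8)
The Gauss-Bonnet corollary is essentially immediate once Poincar\'e-Hopf is established, so the plan is short. First I would recall the setup: $(\Omega,\A,P)$ is a probability space on the set of locally injective functions on $G$, and the curvature is $K(x) = {\rm E}[i_f(x)] = \int_\Omega i_f(x) \, dP(f)$. The content to verify is that summing this expectation over the finitely many simplices $x \in G$ recovers $\chi(G)$.

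The key step is to interchange the finite sum over $x$ with the integral over $\Omega$. Since $G$ is finite and each $i_f(x) = 1 - \chi(S^-_f(x))$ is a bounded measurable function of $f$ (it takes integer values and depends only on the finitely many order relations determined by $f$ on the vertex set of $G_1$), the interchange is valid with no convergence concerns: $\sum_{x \in G} K(x) = \sum_{x \in G} \int_\Omega i_f(x)\, dP = \int_\Omega \Big( \sum_{x \in G} i_f(x) \Big) dP$. Now I would invoke the Poincar\'e-Hopf lemma proved above: for every locally injective $f$ we have $\sum_{x \in G} i_f(x) = \chi(G)$, a quantity independent of $f$. Hence the integrand is the constant $\chi(G)$, and since $P$ is a probability measure, $\int_\Omega \chi(G)\, dP = \chi(G)$, which completes the argument.

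There is essentially no obstacle here — the only thing worth a sentence is the measurability remark, namely that $f \mapsto i_f(x)$ is a well-defined measurable (indeed simple) function on $(\Omega,\A)$, which is needed merely so that the expectation defining $K(x)$ makes sense; this holds because $i_f(x)$ depends on $f$ only through the induced linear order on the neighbors of $x$ in $G_1$, of which there are finitely many possibilities. I would also note, as a remark after the proof, that this recovers the standard graph curvature $K(x) = \sum_{k=0}^{\infty} (-1)^k V_{k-1}(x)/(k+1)$ when $P$ is the uniform measure coming from orderings, since averaging the index over all orderings yields exactly that combinatorial formula; but that identification is not needed for the corollary itself.
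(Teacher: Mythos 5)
Your proposal is correct and follows exactly the paper's argument: average the Poincar\'e--Hopf identity $\chi(G)=\sum_x i_f(x)$ over $(\Omega,\A,P)$ and exchange the finite sum with the expectation. The extra remarks on measurability and the validity of the interchange are fine but not needed beyond what the paper already states.
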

\begin{proof}
As $\chi(G) = \sum_x i_f(x)$, we have $\chi(f)=\sum_x {\rm E}[i_f(x)]$. 
\end{proof}

\paragraph{}
This is a cheap approach to Gauss-Bonnet and also holds for compact Riemannian manifolds $M$. 
There are probability spaces which produces Euler curvature used in the Gauss-Bonnet-Chern theorem. 
One can for example Nash embed $M$ isometrically in an ambient Euclidean space $E$, 
then take the probability space of all linear functions in $E$. They produce a probability space of
functions on $M$. An possibly different curvature is obtained by taking the normalized 
volume measure on $M$ as $P$ using the heat kernel functions 
$\Omega = \{ f_y(x) = [e^{-\tau L_0}]_{xy} \}$ where $L_0$ is the Laplacian on scalars. 

\paragraph{}
If $\Omega = \prod_{x \in G} [-1,1]$ and $P=\prod_{x \in G} dx/2$
of if $P$ is the uniform counting measure on all $c$-colorings, where $c$ is
the chromatic number, we get the {\bf Levitt curvature} 
$$  K(x) = \sum_{y, x \subset y} \frac{\omega(y)}{|y|} 
         = 1-\frac{V_0(x)}{2}+\frac{V_1(x)}{3}-\frac{V_2(x)}{4} \dots \; ,  $$ 
where $V_k(x)$ is the number of $k$-dimensional simplices in $S(x)$. 
In the $2$-dimensional case, the curvature is $K(x)=1-V_0/2+V_1/3$ which is 
$K(x)=1-{\rm deg}(x)/6$ in the case of a triangulation, a case known for a long time.
The general case appeared in \cite{Levitt1992}. 
It was rediscovered in a geometric setting 
in \cite{cherngaussbonnet} where it was placed as a Gauss-Bonnet result. 
The integral theoretic picture of seeing curvature as an expectation of indices is 
adapted from integral geometry in the continuum \cite{indexexpectation,colorcurvature}.

\paragraph{}
As for Poincar\'e-Hopf, also Gauss-Bonnet does not require the graph to be 
the Barycentric refinement of a complex. It holds for any finite simple graph. 
Both the index function $i_f(x)$ as well as the curvature function $K(x)$ are defined
on the vertices but unlike curvature, the index is always integer-valued.
Poincar\'e-Hopf is even more general: let $\phi$ be a map from the set $V_1$ of 
vertices in the Barycentric refinement $G_1$ to the set $V$ of vertices in $G$. 
Now take the values $\omega(x)$ on $V_1$ which add up to Euler characteristic and
place them to the vertex $\phi(x)$. If $\phi(x) = \{ v \in x \; | \; f(v) <f(w), \forall w \in x \}$
then we get Poincar\'e-Hopf. Taking a probability space of functions corresponds then to 
a probability space of transition maps, a Markov process. If we distribute the value $\omega(x)$
equally to its zero-dimensional part, the Levitt version of Gauss-Bonnet results. 

\section{Abstract finite CW complexes}

\paragraph{}
A {\bf finite abstract CW complex} is a geometric, combinatorial object which 
like $\Delta$-sets or simplicial sets generalizes simplicial complexes. 
The definition does not tap into Euclidean structures like classical definition 
of $CW$-complexes. But it closely follows the definition used in the continuum. 
The constructive definition uses a gradual build-up which allows to make proofs easier. 
Any finite abstract simplicial complex is a CW complex but CW complexes 
are more general. 

\paragraph{}
The empty complex $0=\{\}$ is declared to be a CW-complex and to be a $(-1)$-sphere. 
This complex does not contain any cell. To make an extension of a given a complex $H$,
choose a sphere $S$ within $H$ and add a {\bf cell} $x$ which has $S$ as the boundary. 
Given a complex $H$ with $n$ cells and a sub-complex $K$ which is a $d$-sphere, we
add a new $(d+1)$-cell given as the join $x=H+1$.
The integer $d+1$ attached to the cell is the {\bf dimension} of the cell.
The cells in $S(x)$ are declared to be {\bf a subset} of $x$ or {\bf contained in} $x$.
The maximal dimension of a cell $x$ is the {\bf dimension} of the CW complex. 

\paragraph{}
The {\bf unit sphere} $S(x)$ of a cell $x$ is the union of all cells either contained in $x$
or all cells which contain $x$. 
A CW-complex $G$ is called {\bf contractible}, \footnote{We identify here contractible and 
collapsible and would use the terminology ``homotopic to $1$" for the wider equivalence relation.}
if there exists a cell $x$ which has a contractible unit sphere $S(x)$ 
so that $G \setminus x$ is contractible. To start of the inductive definition 
assume that the empty complex $0 = \emptyset$ is a $(-1)$-dimensional sphere declared to be
not contractible and that the $1$-point complex $1 = K_1$ is declared to be contractible. 
A CW-complex $G$ of maximal dimension $d$ is a {\bf $d$-sphere} if it is a $d$-complex
for which removing any single cell produces a contractible complex. A CW-complex $G$ is a 
{\bf $d$-complex}, if all unit spheres $S(x)$ are $(d-1)$-spheres. We do not require a 
CW complex to be a $d$-complex. 

\paragraph{}
A connection Laplacian $L$ for a given CW complex is defined in the same way if we define
two cells to intersect if they have a common sub-cell. 
Assume the connection Laplacian $L$ for $G$ has been constructed, the connection Laplacian $L$ 
of the enlarged complex $G+x$ is a $(n+1) \times (n+1)$ matrix, where $L(x,y)=L(y,x) = 1$ if 
$y \cap (S(x) \cup \{x\}) \neq \emptyset$ and $0$ else. In the case of a 
simplicial complex, which corresponds to the already given definition where 
$L(x,y)=1$ if $x$ and $y$ intersect and $L(x,y)=0$ else. 

\paragraph{}
The {\bf Barycentric refinement} of a CW complex $G$ is the Whitney complex of a graph $G_1$.
The cells of $G$ are the vertices of $G_1$ and two cells are connected, 
if one is contained in the other. 
The {\bf connection graph} $G'$ of $G$ is the graph with the same vertices than $G_1$, but
where two cells are connected if they intersect. We can write $L=1+A$, where $A$ is the 
adjacency matrix of $G'$ so that the determinant of $L$ is the
{\bf Fredholm determinant} of $A$.

\paragraph{}
When adding a cell to a complex, the Euler characteristic changes by 
$\chi(H) \to \chi(G)=\chi(H)+(1-\chi(S(x)))$. 
We show in the over next section that the Fermi characteristic changes by 
$\psi(H) \to \psi(G) = \psi(H) (1-\chi(S(x)))$.

\section{Valuations}

\paragraph{}
A {\bf valuation} $X$ on a simplicial complex $G$ is a real-valued function on the set of
sub-complexes of $G$ satisfying the {\bf valuation property} $X(A \cup B) + X(A \cap B)=X(A)+X(B)$
for all sub-complexes $A,B$. A complex is called {\bf complete} if it is of the form 
$G=2^A \setminus \{ \emptyset \}$ for some finite set $A$. Every simplex $x \in G$ naturally 
defines a {\bf complete complex} $W^-(x) = \{ y \neq \emptyset \; | \; y \subset x \}$.
There is an obvious bijection between
complete sub=complexes of $G$ and subsets of $G$. There is a difference however. Most
notably, the empty complex $0=\emptyset$ is a complex but not a set=simplex in a simplicial complex.
Some authors add the empty set $\emptyset$ to any simplicial complex which leads to the {\bf reduced Euler
characteristic} $\chi(G)-1$, where the {\bf void} $\emptyset$ counts as a $(-1)$-dimensional simplex.
We prefer to avoid voids and see the empty-set as a $(-1)$-dimensional sphere of Euler characteristic 
$0$ and not a simplex and $1-\chi(G)$ as a {\bf genus}. 

\begin{lemma}
A valuation $X$ which satisfies $X(A)=1$ for every complete sub-complex $A$ of $G$
must be equal to the Euler characteristic of $G$.
\label{valuationlemma}
\end{lemma}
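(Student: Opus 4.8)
The plan is to prove, by induction on the number $n=|G|$ of simplices, the statement ``$X(B)=\chi(B)$ for every sub-complex $B$ of $G$'', which is exactly the claim $X=\chi$; throughout I use the usual normalization $X(\emptyset)=0=\chi(\emptyset)$ and read ``complete sub-complex'' as non-empty. The base case $n=1$ is immediate: $G=\{v\}$ is itself the complete complex $W^-(v)$, so $X(G)=1=\chi(G)$ by hypothesis, and the only other sub-complex is $\emptyset$.

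For the inductive step ($n\geq 2$) I would pick a simplex $x\in G$ of maximal dimension. Since $G$ is closed under taking subsets and $x$ is maximal, no simplex of $G$ has $x$ as a proper face, so $H:=G\setminus\{x\}$ is again a simplicial complex, with $|H|=n-1$. Writing $\overline{x}=W^-(x)=\{y\neq\emptyset:y\subseteq x\}$ for the complete complex generated by $x$ and $\partial x=\overline{x}\setminus\{x\}$ for the complex of non-empty proper faces of $x$, one checks directly that
$$ G=H\cup\overline{x}, \qquad H\cap\overline{x}=\partial x, \qquad \partial x\subseteq H . $$
Since $G,H,\overline{x},\partial x$ are all sub-complexes of $G$, the valuation property applied to $H$ and $\overline{x}$ gives $X(G)=X(H)+X(\overline{x})-X(\partial x)$.

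Now $X(\overline{x})=1$ by hypothesis. The restriction of $X$ to the sub-complexes of $H$ is again a valuation, and it still satisfies the hypothesis of the lemma because every complete sub-complex of $H$ is a complete sub-complex of $G$; since $|H|=n-1<n$, the induction hypothesis gives $X(B)=\chi(B)$ for all sub-complexes $B$ of $H$, in particular $X(H)=\chi(H)$ and $X(\partial x)=\chi(\partial x)$. Hence $X(G)=\chi(H)+1-\chi(\partial x)$. On the other hand $\chi$ is itself a valuation with $\chi(\overline{x})=1$ (a simplex is contractible), so the very same decomposition yields $\chi(G)=\chi(H)+\chi(\overline{x})-\chi(\partial x)=\chi(H)+1-\chi(\partial x)$. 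Comparing gives $X(G)=\chi(G)$, which together with the induction hypothesis on sub-complexes of $H$ completes the induction.

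I do not expect a genuine obstacle; the two points that need a little care are running the induction on the pair (complex, restricted valuation) so that the hypothesis is inherited by $H$, and fixing the convention $X(\emptyset)=0$ so that $X$ and $\chi$ are forced to agree on the empty complex as well. A slightly slicker alternative would skip the facet induction: write $G=\bigcup_x \overline{x}$ over the maximal simplices $x$, apply the inclusion-exclusion form of the valuation identity to both $X(G)$ and $\chi(G)$, observe that every multiple intersection $\overline{x_{i_1}}\cap\cdots\cap\overline{x_{i_k}}$ is again a complete complex (the closure of $x_{i_1}\cap\cdots\cap x_{i_k}$) or empty, and use that $X$ and $\chi$ agree on complete and on empty complexes; this reduces both sides to the identical alternating sum. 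The only cost of this route is that it needs the multi-term inclusion-exclusion identity for valuations, which itself is a short induction from the two-term relation.
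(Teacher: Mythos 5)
Your proof is correct, but it takes a genuinely different route from the paper's. The paper does not induct at all: it invokes the discrete Hadwiger theorem of Klain--Rota, by which every valuation on sub-complexes of $G$ is a linear functional $X(A)=\sum_k X_k f_k(A)$ on the $f$-vector, so the space of valuations has dimension $d+1$; since the $f$-vectors of the complete complexes on $1,2,\dots,d+1$ vertices form a triangular, hence independent, family in $\mathbb{R}^{d+1}$, the conditions $X(A)=1$ on complete sub-complexes determine the coefficient vector uniquely, and $\chi$ satisfies them. You replace this imported structure theorem by an elementary shelling induction: peel off a maximal simplex $x$, write $G=H\cup\overline{x}$ with $H\cap\overline{x}=\partial x$, and compare the two-term valuation identities for $X$ and for $\chi$. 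What the paper's route buys is the extra structural information (valuations are exactly the linear functionals on $f$-vectors, and $\chi$ is singled out by a basis computation); what yours buys is self-containedness --- no appeal to Hadwiger --- and an argument in exactly the style of the paper's own proofs of Poincar\'e--Hopf and of the multiplicative Poincar\'e--Hopf proposition, which is where this lemma gets used. Your attention to the normalization $X(\emptyset)=0$ is warranted rather than pedantic: the paper's definition of a valuation does not state it, but it is forced by the Hadwiger representation (the $f$-vector of the empty complex is zero), so the two arguments prove the same statement. Both your facet-by-facet induction and the alternative inclusion--exclusion over maximal simplices (whose pairwise and higher intersections are complete or empty) are sound.
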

\begin{proof}
Since the Euler characteristic of a complete complex is $1$ we only have to
show uniqueness. By the discrete Hadwiger theorem \cite{KlainRota}, any valuation
is of the form
$X(A) = X \cdot f(A)$, where $f(A)$ is the f-vector of $A$ and $X$ is a vector.
If $G$ has maximal dimension $d$, then the space of valuations has dimension $d+1$.
If $X(A)=1$ for every complete graph, this means $X \cdot \vec{f}(K_k) = 1$, for the
$f$-vectors $\vec{f}(K_k)$ of $K_k$. But since these vectors form a basis in the 
vector space $\RR^{d+1}$, we also have uniqueness and $X=\chi$. 
\end{proof}

\paragraph{}
We have defined a valuation only for simplicial complexes so far.
For $CW$ complexes, we can define a valuation to be a linear functional $X(G) = \sum_i X_i f_i(G)$
on the $f$-vector $(f_0(G),f_1(G), \dots, f_d(G))$ of the complex, where $f_k(G)$
counts the number of $k$-dimensional cells in $G$. The same definition applies
for the Cartesian products of two simplicial complexes or for signed complexes by declaring
$X(-G)=-X(G)$ after extending the ``disjoint union monoid" of simplicial complexes 
to a group.

\section{Paths}

\paragraph{}
If $A$ is the adjacency matrix of a graph $\Gamma$, the determinant  ${\rm det}(A)$ is a partition 
function or a ``path integral", in which the underlying paths are 
fixed-point-free signed permutations 
of the vertices. The determinant generates {\bf derangements} $\pi$ for which
$x \to \pi(x) \to \pi(\pi(x)) \dots$ defines oriented paths.
The Fredholm determinant $\zeta(\Gamma) = {\rm det}(1+A)$ is a partition function for all 
oriented cyclic paths $\pi$ in the graph as $x \to \pi(x)$ can now also have pairs $(a,b) \in E$ as 
transpositions and vertices $v \in V$ as fixed points. In short,
$$ {\rm det}(1+A) = \sum_{\gamma} (-1)^{|\gamma|} $$
summing over all one-dimensional oriented cyclic paths $\gamma$ of $\Gamma$
and where $(-1)^{|\gamma|} = {\rm sign}(\gamma)=\phi(\gamma)$, the signature of the corresponding permutation,
is also the Fermi number involving one-dimensional parts of $\gamma$ if $\gamma$ seen as a 
$1$-dimensional complex. 

\paragraph{}
The unimodularity theorem tells that if the graph $\Gamma$ is the connection graph $G'$ 
of a simplicial complex $G$ and $L$ is the connection Laplacian of $G$, then 
$$ {\rm det}(L) = \sum_{\gamma \subset G} (-1)^{|\gamma|}  = \prod_{x \in G} (-1)^{|x|-1} = \phi(G) \; . $$
The energy theorem tells that the Euler characteristic is 
$$ \sum_{x,y} L^{-1}_{x,y} = \sum_{x \in G} (-1)^{|x|-1}  = \chi(G)   \; . $$
Cramer's determinant formula shows that the left hand side has a path interpretation too. In other words,
both the determinant of $L$ as well as the Euler characteristic of $G$ have a path integral representation
summing over one-dimensional closed oriented loops of the complex.

\paragraph{}
If ${\rm per}(A)$ is the {\bf permanent} of $A$, then 
${\rm per}(A)$ is the number {\bf derangements} of the vertex set of
the graph while ${\rm per}(1+A)$ is the number of all
{\bf permutations} of the vertex set.. For $G=K_n$ for example, 
${\rm per}(1+A(K_n))$ generates the {\bf permutation sequence} $1,2,6,24,120,720, \dots$
while the permanent of the adjacency matrix ${\rm per}(A(K_n))$ generates
the {\bf derangement sequence} $0,1,2,9,44,265, \dots$. 

\paragraph{}
Every path has comes with a sign, the signature of its permutation. 
The unimodularity theorem assures that the number of even paths and the number of odd paths 
in a connection graph differ by $1$. The following lemma will be used later. It is a special case of
the muliplicative Poincar\'e-Hopf lemma. If $G$ is a $CW$-complex and $G_x = G+_H x$ is the extended
complex, then, we can look at the new connection graph $G_x' = G' +_{H'} x$. 

\begin{lemma}[Fredholm extension lemma]
If $H$ is a complete subcomplex of $G$ then $\psi( G' +_{H'} x ) = 0$. 
\label{fredholmextension}
\end{lemma}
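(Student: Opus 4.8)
The plan is to show directly that the connection Laplacian $L(G_x)$ of the extended complex $G_x = G +_H x$ has two equal rows, forcing $\det L(G_x)=0$. Since $L(G_x) = 1 + A(G_x')$ and $\psi$ of a connection graph is its Fredholm determinant $\det(1+A)$ (equivalently the Fermi characteristic of $G_x$ in the sense of Theorem~\ref{1}), the assertion $\psi(G' +_{H'} x) = 0$ is precisely the statement $\det L(G_x) = 0$.

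First I would record the two structural facts needed. Adding the cell $x$ changes no entry $L(a,b)$ with $a,b \in G$, because $x$ is a sub-cell of no cell of $G$. Next, a complete subcomplex $H = 2^A \setminus \{\emptyset\}$ of $G$ has a distinguished cell $w$, namely the full set $A$, which lies in $G$ because it already lies in $H$; and then $W^-(w) = \{ z \in G : z \subseteq w \} = H$, since every nonempty subset of $A$ belongs to $H \subseteq G$. (In the purely simplicial case this is immediate; for a general CW complex one reads ``complete'' through the face poset, the only point being that $H$ has a maximal cell $w$ with $W^-(w) = H$.) Note $w \neq x$, so these index distinct rows.

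Now I would compare the $x$-row and the $w$-row of $L(G_x)$. By the extension rule, $L(G_x)(x,y) = 1$ exactly when $y$ intersects some cell of $S(x) \cup \{x\} = H \cup \{x\}$. Every sub-cell of $x$ other than $x$ itself lies in $H = W^-(w)$, hence is a sub-cell of $w$; and $x$ is a sub-cell of nothing, so for $y \in G$ this condition holds iff $y$ and $w$ have a common sub-cell, i.e. iff $L(w,y) = L(G_x)(w,y) = 1$. For the one remaining column $y = x$, we have $L(G_x)(x,x) = 1$ and also $L(G_x)(w,x) = L(G_x)(x,w) = 1$, because $w \in H$ is a common sub-cell of $x$ and of $w$. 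Hence the $x$-row and the $w$-row of $L(G_x)$ coincide, so $\det L(G_x) = 0$, which is the claim.

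I do not anticipate a genuine obstacle: once the bookkeeping above is in place the proof is a single row comparison. The only place requiring care is the CW (non-simplicial) generality, where ``complete subcomplex'' must be interpreted so that the maximal cell $w$ and the identity $W^-(w) = H$ still make sense. The argument is, as expected, the direct incarnation of the multiplicative Poincar\'e-Hopf step $\psi(G_x) = \psi(G)\,(1 - \chi(S(x)))$ applied to the contractible complex $S(x) = H$, for which $1 - \chi(H) = 0$ by Lemma~\ref{valuationlemma}.
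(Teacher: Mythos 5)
Your argument is correct, but it takes a genuinely different route from the paper. The paper proves the lemma combinatorially: it reads $\psi(G' +_{H'} x)$ as the Fredholm determinant $\det(1+A)$, expands it as a signed sum over closed oriented paths in the extended connection graph, and constructs a sign-reversing involution by inserting or deleting the new vertex $x$ into paths through the vertex $h$ that represents the maximal cell of $H$ (a path avoiding $h$ is paired with the same path plus the transposition $hx$, a path using an edge $hk$ with the one using $hxk$, and so on), so that all terms cancel. You instead exhibit two identical rows of $L(G_x)$, the row of $x$ and the row of the top cell $w$ of $H$, resting on exactly the structural fact that drives the paper's pairing: a cell of $G$ meets $x$ iff it has a sub-cell in $H = W^-(w)$ iff it meets $w$, together with $L(x,x)=L(w,x)=L(x,w)=L(w,w)=1$. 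Your version is shorter and tighter --- the paper's ``now look at all paths which do hit $h$ and do not hit $hk$ etc.'' is the least precise step of its proof, whereas two equal rows kill the determinant with no case analysis --- and it is in the spirit of the elementary proof of unimodularity by Mukherjee and Bera that the paper cites. What the path formulation buys is its fit with the surrounding path-integral narrative and an explicit involution, which is the combinatorial content behind the cancellation. You also correctly isolate the one place where completeness of $H$ is indispensable, namely the identification $W^-(w)=H$; that is the same point at which the paper's proof uses it to obtain the single vertex $h$.
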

\begin{proof}
If $H$ is a complete subcomplex of $G$, then it defines a vertex $h$ in $G'$. 
If a path does not hit, then it can be paired with a path adding the $hx$ path 
of length $2$. If a path does hit, then it has to to hit a neighbor $k$ which 
is also connected to $x$. We can now pair such a piece $hk$ with the extended path 
$hxk$ and again get a cancellation. How look at all paths which do hit $h$ and
do not hit $hk$ etc. We see that the paths can be partitioned into a finite set
of paths where each can be paired with an extended path with opposite sign. 
\end{proof} 

\paragraph{}
Note that $H$ is a subcomplex of $G$, not of $G'$. We build then the complex 
$G_x=G +_H x$ which produces the connection graph $G_x'=G' +_{H'} x$. 
The Fredholm extension lemma tells that the Euler characteristic of $H$
and not of $H'$ matters. This is essential as $\chi(H)$ and $\chi(H')$ differ in general. 
Comparing the Fredholm determinant of $G'$ and $(G +_H x)'$ is not the right thing. 
The added vertex $x$ over $H$ attaches a new cell to the CW complex. Also 
on the level of graphs, the extension lemma would fail as the join over $H$
adds a lot more simplices on the connection level. The Fredholm determinant of 
the expanded cell complex $G_x$ which on the connection graph level makes a 
cone extension over $H'$.

\section{Proof of unimodularity} 

\paragraph{}
The key of the proof is a {\bf multiplicative Poincar\'e-Hopf result} for CW complexes which immediately 
proves the unimodularity theorem. If a newly added cell $x$ is odd-dimensional, then $S(x)$ is 
an even-dimensional sphere with Euler characteristic $2$ and $(1-\chi(S(x)))=-1$ switches the sign. If 
the newly added cell $x$ is even-dimensional, then $S(x)$
is odd dimensional. In this case $1-\chi(S(x))=1$ and the sign of the 
product stays the same. The following theorem, it is not assumed that the glue $A=S(x)$ 
is a sphere. It applies therefore for more general complexes.

\begin{propo}[Multiplicative Poincar\'e Hopf]
If $x$ is a new cell attached to a sub-complex $A=S(x)$ of $G$,
then $\psi(G \cup_A \{x\}) = \psi(G) (1-\chi(A))$.
\end{propo}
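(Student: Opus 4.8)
The plan is to convert the statement into a bordered–determinant identity and then recognise the correction term as a valuation, so that Lemma~\ref{valuationlemma} finishes the argument.

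First, fix an ordering of the cells of $G$ and write $L=L(G)$, $g=L^{-1}$. Attaching the new cell $x$ along $A=S(x)$ enlarges the connection Laplacian by one row and one column,
$$ L\bigl(G\cup_A\{x\}\bigr)=\begin{pmatrix} L & u\\ u^{\top} & 1\end{pmatrix}, $$
where $u=u_A$ is the $0/1$ vector indicating the cells $y\in G$ that share a sub-cell with $x$, i.e.\ the cells $y$ with $\overline y\cap A\neq\emptyset$. Cofactor expansion along the last row, together with $\mathrm{adj}(L)=\det(L)\,g$ (equivalently the Schur complement of the invertible block $L$), gives
$$ \psi\bigl(G\cup_A\{x\}\bigr)=\det(L)\,\bigl(1-u_A^{\top}g\,u_A\bigr)=\psi(G)\,\bigl(1-\kappa(A)\bigr),\qquad \kappa(A):=u_A^{\top}g\,u_A. $$
Since $\psi(G)=\det(L)=\pm1$ by Theorem~\ref{1}, the claim is equivalent to $\kappa(A)=\chi(A)$ for every sub-complex $A\subseteq G$.

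Second, I evaluate $\kappa$ on complete sub-complexes. If $A=W^{-}(s)$, then $u_A$ is precisely the indicator of the cells meeting $s$, so $L(G\cup_A\{x\})$ is the matrix $G'+_{H'}x$ of Lemma~\ref{fredholmextension} with $H=W^{-}(s)$; that lemma gives $\psi(G\cup_A\{x\})=0$, hence $\kappa(W^{-}(s))=1$. (One can also see this by hand: using the Green star formula $g(p,q)=\omega(p)\omega(q)\chi(W^{+}(p)\cap W^{+}(q))$ and summing over a common coface, $\kappa(A)=\sum_{r\in G}\omega(r)\,c_A(r)^{2}$ with $c_A(r)=\sum_{p\subseteq r}[\,\overline p\cap A\neq\emptyset\,]\,\omega(p)=1-\chi(B_r(A))$ and $B_r(A)=\{p\subseteq r:\overline p\cap A=\emptyset\}$; for $A=W^{-}(s)$ one computes $c_A(r)=[\,r\subseteq s\,]$, so $\kappa=\sum_{r\subseteq s}\omega(r)=1$.)

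Third, and this is the heart of the matter, I would show that $A\mapsto\kappa(A)$ is a valuation, $\kappa(A\cup B)+\kappa(A\cap B)=\kappa(A)+\kappa(B)$. This is not formal: $u_{A\cup B}=u_A\vee u_B$, but $u_{A\cap B}$ lies in general strictly below $u_A\wedge u_B$, so the quadratic form $u\mapsto u^{\top}g\,u$ does not split additively on its own — one must exploit the structure of $g$. I would reduce it to a sign-reversing involution on pairs of chains $p\subseteq r\supseteq q$ of $G$, grouped by which faces they meet in $A$, in $B$ and in $A\cap B$, refining the path-pairing used in the proof of Lemma~\ref{fredholmextension}. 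Granting the valuation property, Lemma~\ref{valuationlemma} identifies $\kappa$ — a valuation equal to $1$ on every complete sub-complex — with $\chi$, so $\kappa=\chi$ and $\psi(G\cup_A\{x\})=\psi(G)(1-\chi(A))$ as asserted.

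A second route stays in the path/permutation picture and avoids the valuation lemma: in the connection graph of $G\cup_A\{x\}$ the closed paths that do not visit $x$ contribute $\psi(G)$, and the paths through $x$ cancel in blocks indexed by the face of $A$ at which a path first enters $x$, leaving a residual contribution $-\psi(G)\sum_{a\in A}\omega(a)=-\psi(G)\chi(A)$. Either way, the delicate point is the same: keeping track of the signs of the configurations produced at the new vertex $x$.
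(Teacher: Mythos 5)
Your proposal follows essentially the same route as the paper: reduce the claim to showing that the correction term, as a function of the glue $A$, is a valuation; pin it down on complete sub-complexes via the Fredholm extension Lemma~\ref{fredholmextension}; and then invoke Lemma~\ref{valuationlemma} --- your Schur-complement expression $\kappa(A)=u_A^{\top}g\,u_A$ is just an explicit form of the paper's $Y(A)=\psi(G\cup_A x)-\psi(G)$. The one step you honestly flag as unfinished, the valuation property of $\kappa$, is exactly the step the paper asserts without proof in item (i) of its argument, so your write-up matches the published proof in both structure and level of completeness.
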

\begin{proof}
(i) The map 
$$   Y: A \to (\psi(G \cup_A x) - \psi(G)) $$ 
is a valuation. \\
(ii) The valuation satisfies $\psi(G \cup_A x)=0$ if $A$ is a
complete subgraph. This follows from the extension lemma 
\ref{fredholmextension}. \\
(iii) It follows from (ii) that $Y(A)=-\psi(G)$ if $A$ is a complete subgraph.  \\
(iv) By Lemma~\ref{valuationlemma}, $Y$ must be the Euler characteristic
$Y(A)=-\chi(A)$.  But this is equivalent to the claim 
$\psi(G \cup_A \{x\}) = \psi(G) (1-\chi(A))$.
\end{proof}

\paragraph{}
An algebraic argument would use the Laplace expansion of the 
connection matrix $of G \cup_A \{ x\}$ with respect to the newly added
column. We will see that in the proof of Theorem~(\ref{2}). However, the
analysis of the minors requires a similar insight into the path expansion 
as in Lemma~\ref{fredholmextension}. 

\paragraph{}
If more general objects $H$ than spheres would be used in the 
construction $G \to G \cup_H \{x\}$,
the interface $H$ is a general sub-complex of $G$, then the unimodularity theorem would 
fail in general as the Fermi characteristic
can become zero. For a CW complex the subcomplexes $H$ consists of spheres.
This assures that $1-\chi(S(x))$ is always $1$ or $-1$.

\paragraph{}
Every simplicial complex is a CW complex: first start with $0$-dimensional cells, the
vertices, then add one-dimensional simplices $x$. Every
sphere $S(x)$ of such a simplex $x$ is a $0$-dimensional sphere. After having added all $1$-dimensional simplex, each
triangle is still a one-dimensional complex $C_3$, the $1$-skeleton of $K_3$ and a sphere. Now add
the two-dimensional simplices. This converts a one-dimensional cyclic cell complex $C_3$ into a
two dimensional complex $K_3$. We can build up any simplicial complex recursively by starting with
sets of cardinality $1$, then adding sets of cardinality $2$, then $3$ etc. It then follows from
the Euler characteristic or Fermi characteristic changes that
$\chi(G) = \sum_x \omega(x)$ and 
$\psi(G) = \prod_x \omega(x)$, where $\omega(x) = (-1)^{{\rm dim}(x)} = 1-\chi(S^-(x))$
and $S^-(x)$ is the unit sphere of the cell $x$ at the moment it has been added to the CW-complex. 

\paragraph{}
Discrete CW complexes are strictly more general than simplicial complexes: for example
add a cell $x$ to $C_4$. Its Barycentric refinement is
the Whitney complex of a wheel graph. It has only $9=8+1$ cells, where the $8$ cells came from the 
circular graph $C_4$ generated by $\{ (1,2),(2,3),(3,4),(4,1) \}$. The wheel graph $W_4$
obtained as $W_4 = C_4 + 1$ the cone extension on the other hand has more cells:
as a Whitney complex, the wheel graph $W_4$ has $5$ vertices, $8$ edges and $4$ triangles 
leading to a finite simplicial complex with $17$ cells. Topologically the two CW complexes are equivalent,
but the first has $9$ cells, the second one has $17$ cells. As in the continuum, discrete CW complexes have
both practical advantages as well as proof theoretical advantages. 

\paragraph{}
Here is an other example. The two-dimensional cube $K_2 \times K_2$ is not the Whitney complex of a graph. 
It can also not be written as a simplicial complex. The Cartesian product of two simplicial complexes is 
not a simplicial complex if both factors are positive dimensional. We can see $K_2 \times K_2$ however as a
CW-complex with $3 \cdot 3=9$ cells. The Barycentric refinement of this element is the Whitney complex of
a graph $G_1=(V,E)$, where $V$ is the set of $9$ cells and where $(a,b) \in E$, if either $a \subset b$ or
$b \subset a$. Completely analog is the boundary of the solid cube $K_2 \times K_2 \times K_2$ which we call
the "cube" or hexaedron. It can be realized by starting with the one-dimensional graph representing the cube
and adding 6 cells. This gives a CW complex with six $2$-dimensional cells, twelf
$1$-dimensional faces and eight $0$-dimensional cells. 
It is only as a discrete CW complex that we regain the familiar picture of the cube as a
$2$-dimensional sphere.

\section{Joins} 

\paragraph{}
Let us first define the join for simplicial complexes.
Given two sets $x,y \in G$, let $x+y$ be the disjoint union and $x \cup y$ the union. 
Given two complexes $G$ and $H$, then 
$G+H = G \cup H \cup \{ x \cup y \; | \; x \in G, y \in H \}$ is a complex 
called the {\bf Zykov join}. It is the discrete analogue of the join in topology.

\paragraph{}
The complex $G+1$ is a {\bf cone extension} of $G$. If $P_2=\{\{1\},\{2\} \}$ 
denotes the zero-dimensional sphere, the complex $G+P_2$ is the {\bf suspension}. 
The Zykov monoid has the class of spheres as a submonoid. The complex $P_2 + P_2$ is the circle $C_4$,
the complex $P_2+C_4=O$ is the octahedron. The complex $n P_2 = P_2 + P_2 +  \dots + P_2$
is a $(n-1)$-dimensional sphere. A complex is an {\bf additive Zykov prime}, 
if it can not be written as $H+K$, where $H,K$ are complexes. 

\paragraph{}
To define the join for CW complexes $G_0 \to_{S_0} G_1 \to_{S_1} G_2 \to \cdots \to_{S_{n-1}} G_{n}$
and $H_0 \to_{T_0} H_1 \to_{T_1} H_2 \to \cdots \to_{T_{n-1}} T_{n}$ using spheres $S_i,T_i$. 
Now declare $T_k + S_k$ to be the spheres of $G+H$, then build up $G+H$ by building $G_k + H_l$.

\paragraph{}
The following lemma holds for general CW complexes:

\begin{lemma}
If $G$ is contractible and $H$ is arbitrary, then $G+H$ is contractible. 
\end{lemma}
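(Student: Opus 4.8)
The plan is to prove this by induction on the number of cells of $G$, using the recursive definition of contractibility: $G$ is contractible iff there is a cell $x\in G$ with $S(x)$ contractible and $G\setminus x$ contractible. The base case is $G=1=K_1$, the one-point complex; here $G+H = H \cup \{\{v\}\cup y : y\in H\} = H * \{v\}$ is the cone extension of $H$ over the apex $v$. A cone is contractible: one checks that removing the apex $v$ leaves $H$ (not necessarily contractible, so this particular deletion order is wrong) — instead one removes cells of $H$ in the reverse of the order in which they witness $H$'s... no, $H$ is arbitrary. The correct observation is that for the cone $C(H)=1+H$ there is always a \emph{free} cell, namely a top-dimensional cell $z$ of $H$ together with the cell $\{v\}\cup z$: the cell $\{v\}\cup z$ has unit sphere equal to $\partial z * \{v\} \cup \{z\}$, which is a cone hence contractible by a smaller instance, and deleting it from $C(H)$ gives $C(H\setminus z)$, a smaller cone; so by induction on $|H|$ the cone is contractible.

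**The inductive step.** Suppose $G$ is contractible with witnessing cell $x$, so $S_G(x)$ is contractible and $G\setminus x$ is contractible, and let $H$ be arbitrary. I want to exhibit a witnessing cell for $G+H$. The natural candidate is the same cell $x$, now viewed inside $G+H$. Its unit sphere there is
$$ S_{G+H}(x) = S_G(x) + H $$
together with the appropriate join cells: every cell of $H$ is joinable with $x$ (it either sits "above" or is disjoint from $x$ in the join poset), so $S_{G+H}(x) = S_G(x)*H$ in the join sense, i.e. $S_{G+H}(x) = S_G(x)+H$ augmented by the cone-type cells. Since $S_G(x)$ is contractible and has strictly fewer cells than $G$, the outer induction hypothesis (on $|G|$) gives that $S_G(x)+H$ is contractible. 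And $(G+H)\setminus x = (G\setminus x) + H$, which is again contractible by the induction hypothesis since $G\setminus x$ is contractible with fewer cells than $G$. Hence $x$ is a witnessing cell for the contractibility of $G+H$, completing the induction.

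**The main obstacle** is bookkeeping the CW structure of the join: one must check that when a cell $x$ of $G$ is deleted, the join cells $x\cup y$ for $y\in H$ behave correctly — in the CW (not simplicial) setting a "cell" $x$ of $G$ contributes, in $G+H$, not only itself but all the glued cells $x*y$, and these must be deleted in a legitimate order so that each deletion is an elementary collapse (removing a cell with contractible link). The cleanest route is to set up a \textbf{double induction}: outer on $|G|$, inner on $|H|$, and to prove simultaneously the slightly stronger statement that $G+H$ admits an elementary collapse to $G' + H$ whenever $G$ collapses to $G'$; applied with $G'=1$ (which every contractible $G$ collapses to) and then using the cone base case, this yields the result. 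One should also double-check the degenerate cases $H=0$ (empty complex), where $G+0=G$ is contractible by hypothesis, and make sure the sphere-assignment convention for joins of CW complexes stated just before the lemma ($T_k+S_k$ are declared the spheres of $G+H$) is exactly what makes $S_{G+H}(x)=S_G(x)+H$ hold on the nose; that compatibility is really the only thing that needs verification beyond the routine induction.
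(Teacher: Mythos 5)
Your proposal is correct and follows essentially the same route as the paper: induct on the number of cells, take the witnessing cell $x$ with $S_G(x)$ and $G\setminus x$ contractible, observe $S_{G+H}(x)=S_G(x)+H$ and $(G+H)\setminus x=(G\setminus x)+H$, and close the induction. You are in fact somewhat more careful than the paper, which glosses over both the base case $G=K_1$ (where the recursive witness argument does not apply, since $S(x)=0$ is declared non-contractible) and the bookkeeping of the join cells $x\cup z$ that you flag.
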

\begin{proof}
Use induction with respect to the number of cells. Assume $G=K+_A x$,
where $A$ is contractible, then $G+H = K+H +_{A + K} x$. By induction,
$A+K$ is contractible and $K+H$ is contractible. Therefore $G+H$ is contractible.
\end{proof}

\paragraph{}
For example, the cone extension $G+1$ of any complex is contractible. The just
observed statement implies:

\begin{coro}
Contractible complexes form a submonoid of all simplicial complexes. An additive prime
factor of a non-collapsible complex is not-collapsible.
\end{coro}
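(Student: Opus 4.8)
The plan is to prove the two assertions of the corollary separately, both by leveraging the preceding lemma that the join of a contractible complex with an arbitrary complex is contractible. For the first assertion---that contractible complexes form a submonoid---I first note that the $1$-point complex $1 = K_1$ is contractible by the base case of the inductive definition, so the monoid has a unit. Closure under the Zykov sum $+$ is then immediate: if $G$ and $H$ are both contractible, then in particular $G$ is contractible and $H$ is arbitrary, so by the lemma $G+H$ is contractible. Associativity and the unit property are inherited from the ambient Zykov monoid of all complexes, so no further work is needed there; the only content is closure, which the lemma hands us directly.

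For the second assertion---that an additive prime factor of a non-collapsible complex is non-collapsible---I would argue by contraposition. Suppose $G = H + K$ and that $H$ is an additive prime factor which \emph{is} collapsible (contractible). Then $G$ is the join of a contractible complex $H$ with an arbitrary complex $K$, so by the lemma $G$ itself is contractible, i.e. collapsible. Taking the contrapositive: if $G = H+K$ is non-collapsible, then neither $H$ nor $K$ can be collapsible, and in particular no prime factor in any additive factorization of $G$ is collapsible.

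The main subtlety---and the step I expect to require the most care---is bookkeeping around the identification of ``contractible'' with ``collapsible,'' which the paper flags in a footnote as a deliberate convention rather than a theorem. One must be sure that the lemma on joins was proved for the same notion of contractibility that ``non-collapsible'' in the corollary refers to, and that the inductive definition of contractibility for CW complexes (a cell $x$ with contractible unit sphere $S(x)$ such that $G \setminus x$ is contractible) is the one in force throughout. Assuming this consistency, there is no real obstacle: both parts are one-line consequences of the lemma. If one instead wanted the statement for the genuine homotopy equivalence relation (``homotopic to $1$''), the argument would need the corresponding homotopy-invariant version of the join lemma, but as stated the corollary only concerns collapsibility and the elementary argument above suffices.
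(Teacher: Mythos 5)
Your argument is correct and is essentially the paper's own: the corollary is presented as an immediate consequence of the preceding join lemma, with closure under $+$ and the contrapositive for additive prime factors being the entire content. One small slip worth noting: the neutral element of the Zykov join monoid is the empty complex $0$, not $K_1$ (since $G+K_1$ is the cone over $G$, not $G$ itself), but this does not affect the closure argument, which is the only substantive point.
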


\paragraph{}
For example, $C_4 = S_0 + S_0$, where $S_0$ is the zero dimensional $2$-point complex
which is the $0$-dimensional sphere. $C_4$ is not contractible so that also $S_0$ is not
contractible. 

\paragraph{}
A {\bf d-ball} is a punctured sphere, a $d$-sphere for which one vertex has been taken away.
By definition, a $d$-ball is contractible. 
By definition a $d$-ball is a $d$-complex with boundary which has a $(d-1)$-sphere as a boundary. 
For example, the cone extension $G+x$ of a sphere $G$ is a ball because making an other
cone extension $G+x+y$ is a suspension $G+S_0$ which is a sphere so that $G+x=G+S_0-y$ is a ball. 
Unlike spheres, balls do not form a sub-monoid of the join complex. 
The ball $1=K_1$ for example gives gives $1+1=K_2$ which is not a 1-simplex and not a ball. 

\paragraph{}

\begin{lemma}
If $G$ and $H$ are both spheres, then $G+H$ is a sphere.
If $G$ is a sphere and $H$ is a ball, then $G+H$ is a ball. 
\end{lemma}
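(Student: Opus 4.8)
The plan is to prove the two assertions in order: first that the join of a $p$-sphere $G$ and a $q$-sphere $H$ is a $(p+q+1)$-sphere, by induction on $p+q$, and then to read off the ball case. The deduction of the ball case is short: a $q$-ball is by definition $\hat H\setminus v$ for some $q$-sphere $\hat H$ and some vertex $v$, and since deleting a vertex of one factor of a Zykov join deletes exactly that vertex together with the join cells running through it, one has $G+H=(G+\hat H)\setminus v$; by the sphere case $G+\hat H$ is a $(p+q+1)$-sphere, hence $(G+\hat H)\setminus v$ is a $(p+q+1)$-ball by definition. One proves the sphere case first, rather than both statements symmetrically, because a ball joined with a ball need not be a ball, as $1+1=K_2$ shows.

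For the sphere case the base $p=-1$ or $q=-1$ is immediate, since $0$ is the identity of the join. In the inductive step ($p,q\ge 0$) I must check that $G+H$ is a $(p+q+1)$-complex and that deleting any single cell leaves a contractible complex. For the first, I compute the unit sphere $S_{G+H}(z)$ of each cell $z$ directly from the build-up of the join. If $z\in G$, an inspection of the face structure gives $S_{G+H}(z)\cong S_G(z)+H$; here $S_G(z)$ is a $(p-1)$-sphere because $G$ is a $p$-complex, so this is a join of total dimension $p+q-1<p+q$ and the induction hypothesis makes it a $(p+q)$-sphere. The case $z\in H$ is symmetric. If $z=x\cup y$ is a join cell with $x\in G$, $y\in H$, a Leibniz-type identity for the boundary and the link under the join yields $S_{G+H}(x\cup y)\cong S_G(x)+S_H(y)+P_2$, a join of a $(p-1)$-sphere, a $(q-1)$-sphere and the $0$-sphere $P_2$; grouping it as $\big(S_G(x)+S_H(y)\big)+P_2$ and applying the induction hypothesis twice, each time to a join of strictly smaller total dimension, shows it is a $(p+q)$-sphere. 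Since cells of $G+H$ are exactly those of $G$, those of $H$, and join cells, this covers all cases, so $G+H$ is a $(p+q+1)$-complex.

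For the second point, let $z$ be a cell and let $(G+H)\setminus z$ denote the complex obtained by deleting $z$ and every cell containing it. If $z\in G$ then $(G+H)\setminus z=(G\setminus z)+H$, and $G\setminus z$ is contractible because $G$ is a $p$-sphere, so $(G\setminus z)+H$ is contractible by the preceding lemma on joins with contractible complexes; the case $z\in H$ is symmetric. If $z=x\cup y$ is a join cell, one checks $(G+H)\setminus z=\big((G\setminus x)+H\big)\cup\big(G+(H\setminus y)\big)$ with intersection $(G\setminus x)+(H\setminus y)$, and all three of these are contractible by that same lemma; it then remains only to invoke the gluing principle that a union of two contractible subcomplexes with contractible intersection is contractible. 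This establishes that $G+H$ is a $(p+q+1)$-sphere, and the ball case follows as above.

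I expect the main obstacles to be two. First, establishing the combinatorial join identities rigorously inside the inductive build-up of the join — $S_{G+H}(z)\cong S_G(z)+H$, the Leibniz decomposition $S_{G+H}(x\cup y)\cong S_G(x)+S_H(y)+P_2$, and the decomposition of $(G+H)\setminus z$ — where one must track which sphere each cell is attached along and not merely the underlying sets; this rests on the subsidiary facts that a cone on a sphere is a ball bounding that sphere and that two balls glued along their common boundary form the suspension of that boundary, i.e. its join with $P_2$. Second, the gluing lemma for contractibility in the discrete CW category: although its topological analogue is routine, a self-contained combinatorial proof (by a further induction on the number of cells) is needed, and the delicate point is to see that the intersection subcomplex sits compatibly inside both pieces. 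Neither obstacle introduces circularity, since the ball case is used only after the sphere case is complete, and the induction on $p+q$ for the sphere case appeals only to joins of strictly smaller total dimension and to the already-proved contractibility lemma.
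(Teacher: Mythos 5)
Your strategy --- verify the two defining properties of a sphere for $G+H$ directly, by induction on the total dimension, via link identities and a gluing lemma --- is genuinely different from the paper's, which inducts on the number of cells through the recursive CW build-up: writing $G=K+_{A}x$ with $K$ a ball and $A=S(x)$ a lower-dimensional sphere, one gets $G+H=(K+H)+_{A+H}x$, and the two statements of the lemma feed each other in a single induction with no link computations and no gluing. As written, your route has two genuine gaps. First, the link identities you rely on are false as combinatorial isomorphisms. Take $G=\partial\Delta^2=\{\{1\},\{2\},\{3\},\{1,2\},\{2,3\},\{1,3\}\}$ and $H=P_2=\{\{a\},\{b\}\}$. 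For $z=\{1\}$ the unit sphere $S_{G+H}(z)$ (the link inside the Barycentric refinement, which is how the paper defines $S$) is an $8$-cycle, while $S_G(z)+H=P_2+P_2=C_4$; for the join cell $z=\{1,2\}\cup\{a\}$ the unit sphere is a $6$-cycle, while your formula gives $S_G(\{1,2\})+S_H(\{a\})+P_2=P_2+0+P_2=C_4$. These pairs are homeomorphic but not isomorphic (and in the second case not even related by Barycentric refinement: the stable part of the link of $x\cup y$ is the refined boundary $\partial(x\cup y)$, not $\partial x+\partial y+P_2$). Since ``$d$-sphere'' is a recursive combinatorial notion here, the induction hypothesis cannot be applied to $S_{G+H}(z)$ through these formulas; repairing this requires either the corrected decomposition together with the nontrivial fact that $S^{+}(x)$ is a sphere inside a sphere, or an invariance-under-refinement theorem, neither of which is free.

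Second, and more seriously, your contractibility step invokes the principle that a union of two contractible subcomplexes with contractible intersection is contractible. The paper explicitly identifies ``contractible'' with ``collapsible'', and for collapsibility this gluing principle is precisely the kind of statement that fails --- it is the dunce-hat/Bing-house phenomenon the paper itself warns about. The topological analogue is routine via Mayer--Vietoris or van Kampen, but the combinatorial proof ``by a further induction on the number of cells'' that you defer to does not exist in general, and even in the special case needed here (balls glued along balls) establishing it would cost at least as much as the lemma itself. The paper's proof is structured exactly to avoid this: attaching one cell at a time along the sphere $A+H$ means contractibility of the punctured join never has to be reassembled from pieces. By contrast, your deduction of the ball case from the sphere case is fine: $(G+\hat H)\setminus v=G+(\hat H\setminus v)$ does hold when $v$ is a vertex of $\hat H$, since the cells of $G+\hat H$ meeting $v$ are exactly the $x\cup y$ with $v\in y$.
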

\begin{proof}
The proof is inductive. For $G=H=0$, we have $G+H=0$ and
for $G=0,H$ we have $G+H=H$. \\
Write $G=K +_A x$, where $A$ is a smaller dimensional sphere and $K$ is a 
smaller dimensional ball.  Now $G+H = K+H +_{A+H} x$. 
We know $K+H$ is a ball by induction assumption 
and that $A+H$ is a sphere by induction assumption.
\end{proof}

\section{Hyperbolicity} 

\paragraph{}
Given a simplicial complex $A$, its {\bf genus} is defined as 
$\gamma(A) = 1-\chi(A)$.
The reason for the name is that if $A$ is one-dimensional connected complex then $\chi(A)=b_0-b_1=1-b_1$
is the Euler-Poincar\'e formula relating the combinatorial and cohomological 
Euler characteristic then $b_1=\gamma(A)$ is the genus of the curve, the number 
of ``holes". The analogy is less established in higher dimensions:
for surfaces already, where 
$\chi(A)=b_0-b_1+b_2=2-b_1$ it is custom to define the genus as 
$1-\chi(A)/2$. That the notion is in general natural can be seen from the following product formula

\begin{lemma}[Product formula]
$\gamma(A + B) = \gamma(A) \gamma(B)$.
\end{lemma}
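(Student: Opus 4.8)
The plan is to reduce the multiplicative statement for the genus to an additive--multiplicative identity for the Euler characteristic, namely $\chi(A+B) = \chi(A) + \chi(B) - \chi(A)\chi(B)$, and then simply expand. Indeed, once this identity is available,
$$ \gamma(A+B) = 1 - \chi(A+B) = 1 - \chi(A) - \chi(B) + \chi(A)\chi(B) = (1-\chi(A))(1-\chi(B)) = \gamma(A)\gamma(B), $$
which is the claim. So everything rests on the Euler characteristic identity.

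To prove that identity I would work directly with the $f$-vector description of the Zykov join, arranging (as we always may) that $A$ and $B$ live on disjoint vertex sets. Then every simplex of $A+B = A \cup B \cup \{ x \cup y \mid x \in A, y \in B \}$ lies in exactly one of three disjoint classes: a simplex of $A$, a simplex of $B$, or a simplex $x \cup y$ with $x \in A$ and $y \in B$, the disjointness being forced by the disjointness of the vertex sets. For $x \in A$ with ${\rm dim}(x) = p$ and $y \in B$ with ${\rm dim}(y) = q$ one has $|x \cup y| = |x| + |y| = (p+1)+(q+1)$, hence ${\rm dim}(x \cup y) = p+q+1$ and $\omega(x \cup y) = (-1)^{p+q+1} = -\omega(x)\omega(y)$. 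Summing $\omega$ over all simplices of $A+B$ then gives
$$ \chi(A+B) = \sum_{x \in A} \omega(x) + \sum_{y \in B} \omega(y) - \sum_{x \in A,\, y \in B} \omega(x)\omega(y) = \chi(A) + \chi(B) - \chi(A)\chi(B) . $$

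For the CW version of the lemma the same bookkeeping works along the inductive construction of the join: the cells of $G+H$ are the cells of $G$, the cells of $H$, and one new cell for each pair consisting of a cell of $G$ and a cell of $H$, the dimension of the joined cell being one more than the sum of the two dimensions (this is exactly what makes the recursively declared spheres $T_k + S_k$ consistent, since ${\rm dim}(S+T) = {\rm dim}(S) + {\rm dim}(T) + 1$). Since in the CW setting $\chi$ is the linear functional $\sum_k (-1)^k f_k$ on $f$-vectors, the identical computation yields $\chi(G+H) = \chi(G) + \chi(H) - \chi(G)\chi(H)$, and hence the product formula. Alternatively one can induct on the number of cells using $G = K +_A x$ together with $G+H = (K+H) +_{A+H} x$ and the additive rule $\chi(\,\cdot\, +_A \{x\}) = \chi(\,\cdot\,) + (1-\chi(A))$. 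I expect no real obstacle here: the only point requiring care is the trichotomy of simplices (respectively cells) of the join and the off-by-one in the dimension of a joined simplex, and it is precisely that off-by-one which produces the cross term $-\chi(A)\chi(B)$ and therefore the multiplicativity of $1-\chi$.
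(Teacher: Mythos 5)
Your proof is correct and is essentially the paper's argument: the paper packages the trichotomy of simplices of the join as the generating-function identity $f_{A+B}(t)=f_A(t)f_B(t)$ and evaluates at $t=-1$, where $\gamma(G)=f_G(-1)$, while you expand the same identity term by term via $\omega(x\cup y)=-\omega(x)\omega(y)$ to get $\chi(A+B)=\chi(A)+\chi(B)-\chi(A)\chi(B)$. Same decomposition, same off-by-one in dimension doing the work; your write-up just supplies the details the paper leaves implicit.
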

\begin{proof}
If $f(t)=1+\sum_{k=1}^d f_{k} t^{k+1} =            
1+f_0 t + f_1 t^2 + \dots $ is the 
{\bf generating function} for the $f$-vector $(f_0,f_1, \dots)$ of $G$,
then $\chi(G)=-f(-1)$. 
It satisfies therefore $f_{A+B} = f_A f_B$. This implies
$\chi(A+B) = \chi(A) + \chi(B) - \chi(A) \chi(B)$ because 
$\chi(G)=1-f_G(-1)$.
The genus $\gamma(G)=1-\chi(G)$ is multiplicative.
\end{proof}

\paragraph{}
It implies immediately for the complete graph
$K_n=n=1+1+\dots +1$ that $\gamma(n)=1$ and that a {\bf suspension}
$G \to G+S_0$ changes the sign of the genus. 

\paragraph{}
Let $f$ be a locally injective function and $S(x)$ the unit sphere of $x$ in 
the graph $G_1$. It is a simplicial complex in which the vertices are
the simplices which are either contained in $x$ or which are simplices containing $x$. 
Let $S_f^-(x)=\{ y \in S(x) | f(y)<f(x) \}$ and $S_f^+(x) = \{ y \in S(x) | f(y)>f(x)\}$.
All graphs $S(x),S_f^-(x)$ and $S_f^+(x)$ stand for their Whitney complexes.

\begin{lemma}[Hyperbolic structure]
For the function $f={\rm dim}$ on a simplicial complex $G$, 
we have $S(x) = S_f^-(x) + S_f^+(x)$
\end{lemma}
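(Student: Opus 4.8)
The claim is that for $f = \dim$ on a simplicial complex $G$, the unit sphere $S(x)$ in the Barycentric refinement $G_1$ decomposes as the Zykov join $S_f^-(x) + S_f^+(x)$. My plan is to unwind all three objects purely combinatorially at the level of vertex sets and adjacency in $G_1$, and then check that the join relations match. First I would recall that a vertex of $G_1$ is a simplex $y \in G$, and two such vertices $y, z$ are adjacent in $G_1$ iff $y \subset z$ or $z \subset y$. The unit sphere $S(x)$ consists of all $y \in G$ with $y \subsetneq x$ or $x \subsetneq y$. Since $f = \dim$ and $\dim(y) = |y|-1$, the condition $f(y) < f(x)$ singles out exactly the $y$ with $y \subsetneq x$ (note that in $G_1$, adjacency forces comparability, so on $S(x)$ having smaller dimension is equivalent to being a proper subset), and $f(y) > f(x)$ singles out the $y$ with $x \subsetneq y$. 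Hence as vertex sets, $S_f^-(x)$ is the boundary complex of the simplex $x$ (all nonempty proper faces of $x$), and $S_f^+(x)$ is the ``star minus $x$'', i.e.\ all simplices strictly containing $x$.

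The second step is to verify the join identity. Recall $G + H = G \cup H \cup \{\,a \cup b : a \in G, b \in H\,\}$ for simplicial complexes. Here the relevant ``union'' inside $G_1$ is not set-union of the simplices $a,b$ themselves but the edge-relation: I must show that every vertex of $S(x)$ lies in $S_f^-(x)$, in $S_f^+(x)$, or is joined to both, \emph{and} that the Whitney complex of $S(x)$ is exactly the join of the Whitney complexes of the two pieces. Concretely: given a face $a \subsetneq x$ of the lower part and a simplex $b \supsetneq x$ of the upper part, one always has $a \subsetneq x \subsetneq b$, so $a$ and $b$ are comparable and hence adjacent in $G_1$; therefore the edge $(a,b)$ is present. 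Conversely, any clique in $S(x)$ splits according to the position of its vertices relative to $x$: the vertices below $x$ form a flag of proper faces of $x$ (a simplex in $S_f^-(x)$), the vertices above $x$ form a flag of proper cofaces (a simplex in $S_f^+(x)$), and no third type occurs because every vertex of $S(x)$ is comparable to $x$. This is precisely the statement that the clique is the union of a simplex in $S_f^-(x)$ and a simplex in $S_f^+(x)$, i.e.\ a simplex of the join.

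The main obstacle — really the only subtle point — is bookkeeping about \emph{which} union operation is meant: the Zykov join $G+H$ was defined using $x \cup y$ of the underlying sets, but $S(x)$, $S_f^\pm(x)$ are being regarded as \emph{abstract graphs / Whitney complexes} whose vertices are simplices of $G$. So I must be careful that the join of $S_f^-(x)$ and $S_f^+(x)$ is taken in the category where these are complexes on disjoint vertex sets (the lower faces and the upper cofaces of $x$ are genuinely disjoint, since one collection consists of sets of cardinality $< |x|$ contained in $x$ and the other of sets of cardinality $> |x|$ containing $x$), and that the join edges predicted by the Zykov construction coincide with the $G_1$-adjacencies linking a proper face of $x$ to a proper coface of $x$. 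Once that identification is pinned down, the two inclusions of the previous paragraph close the argument.

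One remark worth inserting: this immediately re-derives Poincaré--Hopf with $f=\dim$, since by the product formula $\gamma(S(x)) = \gamma(S_f^-(x))\,\gamma(S_f^+(x))$, and $S_f^-(x) = \partial x$ is an $(\dim x - 1)$-sphere whose genus is $1 - \chi(\partial x) = 1 - (1 + (-1)^{\dim x - 1})$; tracking signs recovers $i_f(x) = 1 - \chi(S_f^-(x)) = \omega(x)$, consistent with the discussion preceding the lemma.
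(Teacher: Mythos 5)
Your proposal is correct and takes essentially the same route as the paper's (much terser) proof: both rest on the observation that every vertex of $S(x)$ is comparable to $x$ and hence falls into the lower part or the upper part according to whether $f=\dim$ decreases or increases, with the join edges supplied by transitivity $a \subsetneq x \subsetneq b$. Your version simply spells out the clique-decomposition and the disjointness bookkeeping that the paper leaves implicit.
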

\begin{proof}
This is stated as Lemma~(1) in \cite{Spheregeometry}.
Every simplex $y$ in $S(x)$ is either a simplex in $S^-_f(x)$ or a simplex
in $S^+_f(x)$.  The function $f$ satisfies $f(y^-)<f(y)$ if $y^- \subset y$ 
and $f(y^+)>f(y)$ if $y^+ \supset y$.
\end{proof}

\paragraph{}
For a general function $f$ we don't have $S(x) = S_f^-(x) + S_f^+(x)$ as the
elements in the stable part $S_f^-(x)$ are not necessarily connected to the
unstable part. They might not even be connected at all in $S(x)$. 

\paragraph{}
{\bf Example.} If $G=W_4$ is the wheel graph with $4$ spikes 
and $f(x)$ is alternatively $1$ or $-1$ on the boundary $S_4$ 
and $0$ on the central point $0$, then $S(0)=C_4$ and $S^+(x)$ is a $0$-dimensional sphere
and $S^+(0)$ an other zero-dimensional sphere. The sphere $S(x)$ is 
the sum of the two spheres. 

\paragraph{}
{\bf Example.} If $G= \{\{1\},\{2\},\{3\},\{4\},\{5\},\{6\}$,$\{1,3\},\{1,4\},\{1,5\}$,
$\{1,6\},\{2,5\},\{3,6\},\{4,5\}$,$\{1,3,6\},\{1,4,5\}\}$ and $x=\{1,3\}$. 
The set of vertices connected to $x$ are $\{\{1\},\{3\},\{1,3,6\}\}$.
This is not a simplicial complex but the graph with these vertices and connecting two
if one is contained in the other is what is the sphere $S(x)$ in $G_1$. 
We have $S_f^-(x) = \{ \{1\}, \{3 \} \}$ and $S_f^+(x) = \{ \{ 1,3,6 \} \}$. 
Now $1-\chi(S(x)) = 1-1=0$ and $1-\chi(S^-(x)) = 1-2=-1$ and 
$s-\chi(S^+(x)) = 1-1 =0$. 

\paragraph{}
For any $x \in G$, define the {\bf index} $i(x) = 1-\chi(S(x))$. 
For any locally injective function, the index $i_f(x)$ of $f$ and the index $i_{-f}(x)$ of $-f$
are linked by

\begin{coro}[Dual index]
$i_{f}(x) i_{-f}(x) = i(x)$. 
\end{coro}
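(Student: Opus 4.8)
The plan is to combine the Hyperbolic structure lemma with the multiplicativity of the genus $\gamma = 1-\chi$ under the Zykov join (the Product formula). Recall that $i_f(x) = 1-\chi(S^-_f(x)) = \gamma(S^-_f(x))$ and similarly $i_{-f}(x) = \gamma(S^-_{-f}(x))$. The first observation is that for a locally injective function $f$, the stable sphere of $-f$ is exactly the unstable sphere of $f$: $S^-_{-f}(x) = \{ y \in S(x) : -f(y) < -f(x)\} = \{ y \in S(x) : f(y) > f(x)\} = S^+_f(x)$. Hence $i_{-f}(x) = \gamma(S^+_f(x))$, and the claim becomes $\gamma(S^-_f(x))\,\gamma(S^+_f(x)) = 1-\chi(S(x)) = \gamma(S(x))$.

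The second step is to verify that $S(x) = S^-_f(x) + S^+_f(x)$ as a Zykov join, for \emph{any} locally injective $f$ on the graph $G_1$ and any vertex $x$. The Hyperbolic structure lemma states this for $f = \dim$, but the proof given there — that every vertex $y$ of $S(x)$ lies in exactly one of $S^-_f(x), S^+_f(x)$, and that every $y^- \in S^-_f(x)$ is joined to every $y^+ \in S^+_f(x)$ — in fact needs only local injectivity. Indeed $y$ is a neighbor of $x$ in $G_1$, so by local injectivity $f(y)\neq f(x)$, placing $y$ in exactly one part; and a vertex $y^-$ with $f(y^-)<f(x)$ and a vertex $y^+$ with $f(y^+)>f(x)$ that are both neighbors of $x$ in $G_1$ must themselves be neighbors (this is the edge-join condition, which I will check holds in the Barycentric refinement). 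Once $S(x) = S^-_f(x) + S^+_f(x)$ is established, the Product formula gives $\gamma(S(x)) = \gamma(S^-_f(x))\gamma(S^+_f(x))$ directly.

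Combining the two steps: $i_f(x)\, i_{-f}(x) = \gamma(S^-_f(x))\,\gamma(S^+_f(x)) = \gamma(S(x)) = 1-\chi(S(x)) = i(x)$, which is the assertion.

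I expect the main obstacle to be the second step — confirming that the join decomposition $S(x) = S^-_f(x) + S^+_f(x)$ survives the passage from $f=\dim$ to a general locally injective $f$. The paragraph following the Hyperbolic structure lemma explicitly warns that ``for a general function $f$ we don't have $S(x) = S^-_f(x) + S^+_f(x)$,'' so the decomposition is genuinely false in the setting of arbitrary graphs; the corollary must therefore be read as applying to the graphs $G_1$ that are Barycentric refinements of simplicial complexes, where the neighbors of $x$ in $S(x)$ are precisely the simplices comparable to $x$ under inclusion, and comparability is a transitive-enough relation to force the needed edges between a subset and a superset of $x$. Pinning down exactly this point — that $y^- \subsetneq x \subsetneq y^+$ implies $y^- \subsetneq y^+$, hence $y^-$ and $y^+$ are adjacent in $G_1$, so the join is complete across the two parts — is the crux, and everything else is bookkeeping with the genus product formula.
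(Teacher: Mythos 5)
Your skeleton (rewrite $i_{-f}(x)=1-\chi(S^+_f(x))=\gamma(S^+_f(x))$, decompose $S(x)$ as a join, apply the genus product formula $\gamma(A+B)=\gamma(A)\gamma(B)$) is exactly the paper's route; the printed proof is only the one-line reformulation, with the Hyperbolic structure lemma and the Product formula doing the actual work. The problem is your second step. The join decomposition $S(x)=S^-_f(x)+S^+_f(x)$ does not upgrade to an arbitrary locally injective $f$, and your proposed crux, ``$y^-\subsetneq x\subsetneq y^+$ implies $y^-\subsetneq y^+$, hence adjacency,'' silently assumes that every element of $S^-_f(x)$ is a proper face of $x$ and every element of $S^+_f(x)$ a proper coface. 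That holds only when $f=\dim$ (or $-\dim$): for a general locally injective $f$ the two parts are cut out by the values of $f$, so both may contain proper faces of $x$, and two distinct proper faces of $x$ are incomparable, hence not adjacent in $G_1$. Concretely, let $x=\{a,b\}$ be a facet, so $S(x)=\{\{a\},\{b\}\}$ is a $0$-sphere, and pick $f$ with $f(\{a\})<f(x)<f(\{b\})$. Then $i_f(x)=i_{-f}(x)=0$ while $i(x)=1-\chi(S(x))=-1$, so the identity itself fails in the generality you aim for; this is precisely the failure the paper flags in the paragraph after the Hyperbolic structure lemma (``for a general function $f$ we don't have $S(x)=S^-_f(x)+S^+_f(x)$'').

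So the corollary has to be read with $f=\dim$, which is how it is used afterwards (in the Sphere Gauss--Bonnet corollary one takes $i_{\dim}(x)=\omega(x)$ and deduces $i_{-\dim}(x)=\omega(x)(1-\chi(S(x)))$). With that restriction your argument closes up and coincides with the paper's: $S^-_{\dim}(x)$ consists of the proper faces of $x$, $S^+_{\dim}(x)$ of the proper cofaces, every face is contained in every coface so the join decomposition holds, and the Product formula finishes the computation. You should drop the attempt to prove the decomposition for all locally injective $f$ and instead invoke the Hyperbolic structure lemma as stated.
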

\begin{proof}
This is a reformulation as $i_{-f}(x)=1-\chi(S^+_f(x))$
and $i_f(x) = 1-\chi(S^-_f(x))$. 
\end{proof}

\paragraph{}
It follows for $f = {\rm dim}$ that 
$$    k(x) = \omega(x) (1-\chi(S(x))) $$
is a curvature: 

\begin{coro}[Sphere Gauss-Bonnet]
$\sum_x k(x) = \chi(G)$. 
\label{spheregaussbonnet}
\end{coro}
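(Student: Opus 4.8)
The plan is to realize $k(x) = \omega(x)(1-\chi(S(x)))$ as an index-expectation coming from a genuine Poincaré--Hopf pairing, so that the sum telescopes to $\chi(G)$ in the same way that $\sum_x i_f(x) = \chi(G)$ does in Lemma~1. The cleanest route is to combine two facts already in the excerpt: the Dual Index Corollary, which gives $i_f(x)\, i_{-f}(x) = i(x) = 1-\chi(S(x))$ for any locally injective $f$, and the Hyperbolic Structure Lemma, which for $f = \dim$ gives the splitting $S(x) = S_f^-(x) + S_f^+(x)$. Since $\gamma(A+B) = \gamma(A)\gamma(B)$ (Product formula), the splitting immediately yields $1-\chi(S(x)) = (1-\chi(S_f^-(x)))(1-\chi(S_f^+(x))) = i_f(x)\, i_{-f}(x)$. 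Now for $f = \dim$ we have $i_f(x) = 1-\chi(S_f^-(x)) = \omega(x)$, because $S_f^-(x)$ is the boundary sphere of the simplex $x$, which has Euler characteristic $1-(-1)^{\dim x} = 1-\omega(x)$... wait, more precisely $S^-(x)$ is a $(\dim x - 1)$-sphere whose genus is $\omega(x)$, as noted in the paragraph following the Hyperbolic Structure Lemma. Hence $k(x) = \omega(x)\, i(x) = \omega(x)\cdot \omega(x)\cdot i_{-f}(x) = i_{-f}(x)$, using $\omega(x)^2 = 1$.

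With that identification in hand, the corollary reduces to Poincaré--Hopf for the locally injective function $-\dim$ on the Barycentric refinement graph $G_1$: by Lemma~1, $\sum_x i_{-\dim}(x) = \chi(G_1)$, and $\chi(G_1) = \chi(G)$ because Euler characteristic is invariant under Barycentric refinement (as established via Poincaré--Hopf applied to $f = \dim$, or via the Stirling-number argument in the Poincaré--Hopf section). Therefore $\sum_x k(x) = \sum_x i_{-\dim}(x) = \chi(G_1) = \chi(G)$, which is the claim.

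An alternative, more self-contained route that avoids invoking $-\dim$ as a legitimate coloring is to view $k$ directly as an index expectation: for the probability space on locally injective functions obtained by negating a random coloring, or simply by pairing $f$ with $-f$, the average $\tfrac12(i_f(x) + i_{-f}(x))$ is not what we want, but the \emph{product} structure from the Dual Index Corollary is exactly what delivers $k(x)$ after multiplying by $\omega(x)$. The key step is thus the multiplicativity $1-\chi(S(x)) = i_f(x)\,i_{-f}(x)$, which rests on the Hyperbolic Structure Lemma and the Product formula for genus, both already proved.

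The main obstacle I anticipate is justifying the step $i_{\dim}(x) = \omega(x)$ cleanly: one must verify that $S_f^-(x)$ for $f=\dim$ really is the boundary complex of the simplex $x$ (equivalently, the $(\dim x - 1)$-sphere $\partial x$), so that $1 - \chi(S_f^-(x))$ equals the genus $\omega(x)$ — this uses that the boundary of an $n$-simplex is an $(n-1)$-sphere with Euler characteristic $1 + (-1)^{n-1}$, hence genus $(-1)^n = \omega(x)$. One also needs that $-\dim$ is locally injective on $G_1$ (immediate, since $\dim$ is) so that Lemma~1 applies. Once these routine verifications are in place, the telescoping is automatic and nothing further is needed.
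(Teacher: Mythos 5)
Your proposal is correct and follows essentially the same route as the paper: the paper's proof likewise identifies $k(x)=\omega(x)(1-\chi(S(x)))$ with the Poincar\'e--Hopf index $i_{-\dim}(x)$ and then invokes Poincar\'e--Hopf on $G_1$. You simply spell out in more detail the identification via the Dual Index Corollary, the Hyperbolic Structure Lemma, and $i_{\dim}(x)=\omega(x)$, all of which the paper establishes in the paragraphs immediately preceding the corollary.
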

\begin{proof}
The sum $\sum_x k(x)$ is equal to the sum $\sum_x \omega(x) (1-\chi(S(x)))$
which is $\sum_x i_f(x)$ where $f(x)=-{\rm dim(x)}$. The result follows now from 
Poincar\'e-Hopf.  
\end{proof}

\section{McKean-Singer formula}

\paragraph{}
The {\bf trace} of a matrix $L$ on the finite dimensional Hilbert space of all 
functions $G \to \RR$ is defined as ${\rm tr}(L) = \sum_x L(x,x)$. 
The {\bf super trace} is defined as 
$$  {\rm str}(L) = \sum_x \omega(x) L(x,x) \; . $$

\paragraph{}
If $g$ is the inverse of $L$, then its diagonal entries are the genus of $S(x)$. 

\begin{lemma}[Green's function formula]
$g(x,x) = (1-\chi(S(x)))$. 
\end{lemma}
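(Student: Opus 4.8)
The plan is to reduce the identity to the Green star formula (already stated) together with the join description of the unit sphere. Putting $y=x$ in the Green star formula and using $\omega(x)^2=1$ and $W^+(x)\cap W^+(x)=W^+(x)$ gives $g(x,x)=\chi(W^+(x))$, where $\chi$ of the set of sets $W^+(x)$ is the alternating sum $\sum_{y\supseteq x}\omega(y)$ with $\omega(y)=(-1)^{|y|-1}$. So it suffices to prove the purely combinatorial statement $\chi(W^+(x))=1-\chi(S(x))$, and I would do this by showing that both sides equal $\omega(x)\,\gamma(\mathrm{lk}(x))$, where $\mathrm{lk}(x)=\{z\neq\emptyset \mid z\cap x=\emptyset,\ z\cup x\in G\}$ is the link of $x$ (an honest simplicial complex) and $\gamma=1-\chi$ is the genus.

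For the left-hand side, the map $y\mapsto z=y\setminus x$ is an order isomorphism from $W^+(x)$ onto $\mathrm{lk}(x)\cup\{\emptyset\}$; since $|y|=|z|+|x|$ one gets $\omega(y)=\omega(x)$ when $z=\emptyset$ and $\omega(y)=-\omega(x)\,\omega_{\mathrm{lk}}(z)$ when $z\neq\emptyset$. Summing over $y$ gives $\chi(W^+(x))=\omega(x)-\omega(x)\chi(\mathrm{lk}(x))=\omega(x)\,\gamma(\mathrm{lk}(x))$.

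For the right-hand side, the hyperbolic structure lemma applied to $f=\dim$ gives the Zykov join decomposition $S(x)=S^-(x)+S^+(x)$, so the product formula for the genus yields $\gamma(S(x))=\gamma(S^-(x))\,\gamma(S^+(x))$. Here $S^-(x)$ is the boundary complex of the simplex $x$, a $(\dim x-1)$-sphere, whose Euler characteristic is $0$ or $2$ according to the parity of $\dim x$, so $\gamma(S^-(x))=(-1)^{\dim x}=\omega(x)$. On the other hand the subgraph of $G_1$ induced on $S^+(x)$ is, via the same map $y\mapsto y\setminus x$, identified with the Barycentric refinement of $\mathrm{lk}(x)$; since Euler characteristic is invariant under Barycentric refinement (a consequence of Poincar\'e--Hopf), $\gamma(S^+(x))=\gamma(\mathrm{lk}(x))$. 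Therefore $1-\chi(S(x))=\omega(x)\,\gamma(\mathrm{lk}(x))=g(x,x)$.

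The one point that needs care is to keep apart the two Euler characteristics of the set of cofaces of $x$: the one weighted by the $G$-grading $\omega$, which enters the Green star formula, and the one of the Whitney complex that these cofaces span inside $G_1$, which enters $1-\chi(S(x))$. The link, together with the Barycentric invariance of $\chi$, is exactly the bridge between them, and that identification is the only substantive step; once it is in place the two expressions agree term by term. As a sanity check, the extreme cases $\dim x=0$ (where $S^-(x)=\emptyset$) and $x$ a facet (where $S^+(x)=\emptyset$) reproduce the small examples, e.g. $g(\{1\},\{1\})=0$ and $g(\{1,2\},\{1,2\})=-1$ for $G=K_2$.
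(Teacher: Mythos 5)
Your proof is correct, but it takes a genuinely different route from the paper. The paper's own argument is a two-line application of Cramer's rule: $g(x,x)=\det(L[x,x])/\det(L)=\psi(G\setminus x)/\psi(G)$, and the multiplicative Poincar\'e--Hopf proposition identifies this ratio with $1-\chi(S(x))$ (a unit in $\{\pm1\}$, so the ratio equals the value itself). You instead specialize the Green star formula to $y=x$ to get $g(x,x)=\chi(W^+(x))$ and then prove the purely combinatorial identity $\chi(W^+(x))=1-\chi(S(x))$ by showing both sides equal $\omega(x)\,\gamma(\mathrm{lk}(x))$, using the shift $y\mapsto y\setminus x$, the join decomposition $S(x)=S^-(x)+S^+(x)$, the multiplicativity of the genus, and Barycentric invariance of $\chi$. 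This is not circular: the paper's proof of the Green star formula (verifying $Lg=1$ directly via the complete-complex lemma) does not rely on the present statement, although you should note that you are invoking a theorem whose proof appears only in the final section, whereas the paper's Cramer argument needs only unimodularity and multiplicative Poincar\'e--Hopf, which are already in place at this point and extend to CW complexes. What your route buys is the extra content that the paper's later corollary $g(x,x)=\chi(W^+(x))=1-\chi(S(x))$ asserts without proof: you supply exactly the missing bridge between the $\omega$-weighted Euler characteristic of the star and the Euler characteristic of the Whitney complex it spans in $G_1$, via the link and the identities $\gamma(S^-(x))=\omega(x)$ and $\gamma(S^+(x))=\gamma(\mathrm{lk}(x))$; your sanity checks against $K_2$ are consistent with Example~5.
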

\begin{proof}
By the Cramer formula, $g(x,x)$ is $\psi(G \setminus x)/\psi(G)$ but
by the multiplicative Poincar\'e-Hopf theorem, 
this is $1-\chi(S(x))$, where $S(x)$ is the unit sphere in $G_1$. 
\end{proof} 

\paragraph{}
The sum 
$$   V(x) = \sum_{y \in G} g(x,y) $$ 
is the {\bf total potential energy} of $x$. We have now:

\begin{coro}[Mc-Kean Singer]
$\str(L^{-1}) = \chi(G)$.
\end{coro}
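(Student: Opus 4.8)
The plan is to combine two facts already in hand: the Green's function diagonal formula $g(x,x) = 1-\chi(S(x))$, which was just proved, and the Sphere Gauss--Bonnet corollary (Corollary~\ref{spheregaussbonnet}), which states $\sum_x k(x) = \chi(G)$ where $k(x) = \omega(x)(1-\chi(S(x)))$. Unwinding the definition of the super trace, we have
\[
  \str(L^{-1}) = \sum_{x \in G} \omega(x)\, g(x,x).
\]
First I would substitute the Green's function formula $g(x,x) = 1-\chi(S(x))$ into this sum. This turns the right-hand side into $\sum_{x} \omega(x)(1-\chi(S(x)))$, which is precisely $\sum_x k(x)$. Then I would invoke Sphere Gauss--Bonnet to conclude this equals $\chi(G)$.

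So the argument is essentially a two-line composition:
\[
  \str(L^{-1}) = \sum_x \omega(x)\, g(x,x) = \sum_x \omega(x)\,(1-\chi(S(x))) = \sum_x k(x) = \chi(G),
\]
using the definition of $\str$, the Green's function formula, the definition of $k(x)$, and Corollary~\ref{spheregaussbonnet} in that order.

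There is no real obstacle here, since both ingredients are established; the only thing to be careful about is that the $S(x)$ appearing in the Green's function formula (the unit sphere in the Barycentric refinement $G_1$) is the same object as the $S(x)$ appearing in the definition of the curvature $k(x)$ and in Sphere Gauss--Bonnet — which it is, both being the unit sphere of the cell $x$ viewed as a vertex of $G_1$. One could also note in passing that since $L$ is symmetric its spectrum is real, and this identity can equivalently be read as a statement that the super-trace of the resolvent-type operator $L^{-1}$ recovers the Euler characteristic, paralleling the classical McKean--Singer identity $\str(e^{-tH}) = \chi(G)$ for the Hodge Laplacian; but that remark is not needed for the proof itself.
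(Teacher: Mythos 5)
Your proposal is correct and follows exactly the paper's route: substitute the Green's function formula $g(x,x)=1-\chi(S(x))$ into the definition of the super trace and then invoke the Sphere Gauss--Bonnet corollary $\sum_x k(x)=\chi(G)$. The paper's own proof describes the corollary as a reformulation of Sphere Gauss--Bonnet in precisely this way, so there is nothing to add.
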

\begin{proof}
This is a reformulation of the sphere Gauss-Bonnet formula
\ref{spheregaussbonnet} that
$$ \chi(G) = \sum_x \omega(x) (1-\chi(S(x))) = \sum_x k(x) \; , $$
which used that $k(x) = i_f(x)$ for $f=-{\rm dim}$. 
\end{proof} 

\paragraph{}
This is the analog of the McKean-Singer formula \cite{McKeanSinger}
$$  \str(e^{-t H}) = \chi(G)  \; , $$ 
which holds for the Hodge 
Laplacian $H=(d+d^*)^2$ of $G$ \cite{knillmckeansinger}. The discrete
case for the Hodge Laplacian follows closely the continuum proof given
in \cite{Cycon}. Here, for the connection Laplacian, we have 
$\str(L^k) = \chi(G)$ for $k=-1,0,1$, where the cases $k=0,1$ are
the definition of Euler characteristic. For $|k| \geq 2$, there
is no such identity any more in general. 

\begin{coro}
For any simplicial complex $\sum_x \omega(x) \chi(S(x))  = 0$. 
\end{coro}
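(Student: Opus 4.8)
The plan is to derive this as an immediate difference of two already-established identities: the Sphere Gauss-Bonnet corollary~\ref{spheregaussbonnet} and the defining formula $\chi(G) = \sum_{x \in G} \omega(x)$. Both express $\chi(G)$ as a sum over the simplices of $G$, so subtracting them term by term should isolate the quantity $\sum_x \omega(x)\chi(S(x))$.

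Concretely, I would first recall that by definition of the Euler characteristic we have $\sum_{x \in G} \omega(x) = \chi(G)$, which is exactly $\str(L^0) = \str(I)$. Next, invoke Sphere Gauss-Bonnet (Corollary~\ref{spheregaussbonnet}), which states $\sum_{x \in G} \omega(x)\bigl(1 - \chi(S(x))\bigr) = \chi(G)$; equivalently, via the Green's function formula $g(x,x) = 1 - \chi(S(x))$, this is the McKean-Singer identity $\str(L^{-1}) = \chi(G)$. Expanding the left-hand side of Sphere Gauss-Bonnet gives
$$ \sum_{x \in G} \omega(x) - \sum_{x \in G} \omega(x)\chi(S(x)) = \chi(G) . $$
Since the first sum on the left equals $\chi(G)$, the two copies of $\chi(G)$ cancel and we are left with $\sum_{x \in G} \omega(x)\chi(S(x)) = 0$, as claimed.

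There is essentially no obstacle here: the corollary is a purely formal consequence of the linearity of $\str$ applied to $L^{-1} - L^0$, both of which have already been shown to have super trace equal to $\chi(G)$. The only thing to be careful about is making sure the reader sees that $\str(I) = \sum_x \omega(x)(-1)^0 \cdot 1 = \chi(G)$ is genuinely the combinatorial definition of Euler characteristic and not a separate fact requiring proof, so that the argument does not appear circular. One may additionally remark that this gives a third member $\str(L^0)=\chi(G)$ of the short list $\str(L^k)=\chi(G)$ for $k \in \{-1,0,1\}$, and that the vanishing $\sum_x \omega(x)\chi(S(x))=0$ is the precise statement that the $k=-1$ and $k=0$ identities carry the ``same content'' beyond their common value.
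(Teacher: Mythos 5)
Your proposal is correct and is essentially identical to the paper's own proof: the paper likewise derives the identity by subtracting the definition $\chi(G)=\sum_x \omega(x)$ from the Sphere Gauss--Bonnet identity $\chi(G)=\sum_x \omega(x)(1-\chi(S(x)))$. The additional framing via $\str(L^{-1})-\str(L^0)$ is a harmless restatement of the same cancellation.
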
 
\begin{proof}
This follows directly from 
$\chi(G) = \sum_x \omega(x) (1-\chi(S(x)))= \sum_x \omega(x)$. 
\end{proof}


\section{Proof of the Energy theorem}

\paragraph{}
Denote by $B(x)=B_{G'}(x)$ the {\bf unit ball} of $x$ in the connection graph $G'$.
We can write the {\bf connection vertex degree} $d(x) = d_{G'}(x)$ of a vertex $x$ in terms
of stable spheres $S^+(y) = \{ z \in S(y)\; | \; z \subset y \}$ in $G_1$. 

\begin{lemma}[Unit Ball lemma]
$d(x)= \sum_{y \in B(x)} \chi(S^+(y))$.
\label{balllemma}
\end{lemma}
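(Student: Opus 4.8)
The plan is to unfold both sides of the claimed identity $d(x)=\sum_{y\in B(x)}\chi(S^+(y))$ combinatorially and to match them term by term. First I would recall the definitions: $B(x)=B_{G'}(x)$ consists of $x$ together with all $y\in G$ that intersect $x$ (as sets in the simplicial complex), so $d(x)=|B(x)|-1$ counts exactly the $y\neq x$ with $x\cap y\neq\emptyset$. On the right-hand side, $S^+(y)$ is the \emph{stable sphere} $\{z\in S(y)\mid z\subset y\}$, which is the boundary complex of the simplex $y$; hence $\chi(S^+(y))$ is $0$ if $\dim(y)$ is even (an even-dimensional sphere $S^{\dim(y)-1}$... wait, $\partial y$ has dimension $\dim(y)-1$) and more precisely $\chi(S^+(y)) = 1-\omega(y)$, since a punctured/boundary simplex sphere of dimension $d-1$ has Euler characteristic $1-(-1)^{d}$. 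So the right-hand side is $\sum_{y\in B(x)}(1-\omega(y))$.

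Next I would compute $\sum_{y\in B(x)}(1-\omega(y)) = |B(x)| - \sum_{y\in B(x)}\omega(y) = |B(x)| - \chi(W^-... )$; more usefully, $\sum_{y\in B(x)}\omega(y)$ is the Euler characteristic of the set of simplices meeting $x$, together with $x$ itself. The cleanest route: the simplices intersecting a given simplex $x$ are exactly $\bigcup_{\emptyset\neq z\subseteq x} W^+(z)$, the union of the stars of the faces of $x$. By inclusion–exclusion (Euler characteristic is a valuation, Lemma~\ref{valuationlemma}) one can show $\sum_{y\cap x\neq\emptyset}\omega(y) = \chi\big(\bigcup_{z\subseteq x}W^+(z)\big)$, and because $x$ itself lies in every such star, this union is a cone with apex $x$ over $S(x)$ in $G_1$, hence contractible with Euler characteristic $1$. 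Therefore $\sum_{y\in B(x)}\omega(y) = 1$ (the single even term must dominate), giving $\sum_{y\in B(x)}(1-\omega(y)) = |B(x)| - 1 = d(x)$, as required. An alternative and perhaps more self-contained derivation is to note $\sum_{y\in B(x)}\chi(S^+(y)) = \sum_{y\in B(x)}(1-\omega(y))$ and then invoke the identity $\sum_{y\in B(x)}\omega(y)=1$ directly from the fact that $B(x)$ as a subset of $G$ corresponds to a contractible star-shaped region; one can also see this as a Poincar\'e–Hopf / Gauss–Bonnet statement localized at $x$.

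The main obstacle I expect is making rigorous the claim that $\sum_{y:\,x\cap y\neq\emptyset}\omega(y) = 1$, i.e. that the collection of simplices meeting $x$ has Euler characteristic $1$. This is intuitively clear because that collection is ``star-shaped'' about $x$ in the Barycentric picture, but the collection is a set of sets rather than a genuine subcomplex of $G$, so one must be careful to interpret $\chi$ as $\sum\omega$ over the set and to justify the inclusion–exclusion over the covering $\{W^+(z)\}_{z\subseteq x}$. The right framework is the valuation/Hadwiger machinery already set up (Lemma~\ref{valuationlemma}) together with the observation that each $W^+(z)$ and each of their finite intersections $W^+(z_1)\cap\cdots\cap W^+(z_k)=W^+(z_1\cup\cdots\cup z_k)$ is again a star, which is contractible in $G_1$ and hence has Euler characteristic $1$; then the inclusion–exclusion alternating sum over the nonempty faces $z\subseteq x$ telescopes to $\sum_{\emptyset\neq z\subseteq x}(-1)^{|z|-1}\cdot 1 = \chi(W^-(x)) = 1$. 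Once that bookkeeping is in place, the rest is the routine arithmetic sketched above.
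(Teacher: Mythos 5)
There is a genuine gap, and it sits exactly where you suspected. Your reduction turns the lemma into the claim $\sum_{y\in B(x)}\omega(y)=1$, and you propose to prove this by writing $B(x)=\bigcup_{\emptyset\neq z\subseteq x}W^+(z)$ and arguing that each star $W^+(z)$ (and each intersection of stars, again a star) has Euler characteristic $1$ because it is contractible in $G_1$. This conflates two different Euler characteristics. The quantity your inclusion--exclusion needs is the weighted count $\chi(W^+(z))=\sum_{y\supseteq z}\omega(y)$, where $\omega(y)=(-1)^{\dim(y)}$ uses the dimension of $y$ in $G$; this is \emph{not} the Euler characteristic of the star viewed as a cone in the Barycentric refinement. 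In fact $\sum_{y\supseteq z}\omega(y)=1-\chi(S(z))$ is the genus of the unit sphere of $z$ (the paper's Green's function formula for $g(z,z)$), and it is not $1$ in general: for a maximal edge it equals $-1$. Consequently the identity $\sum_{y\in B(x)}\omega(y)=1$ is false. Concretely, let $G$ be the Whitney complex of $K_3$ and $x=\{1\}$: then $B(x)=\{\{1\},\{1,2\},\{1,3\},\{1,2,3\}\}$, $d(x)=3$, while $\sum_{y\in B(x)}\omega(y)=1-1-1+1=0\neq 1$ and your right-hand side $\sum_{y\in B(x)}(1-\omega(y))=0+2+2+0=4\neq 3$. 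The correct inclusion--exclusion gives $\sum_{y\in B(x)}\omega(y)=\sum_{\emptyset\neq z\subseteq x}(-1)^{|z|+1}\bigl(1-\chi(S(z))\bigr)$, which does not telescope to $1$.

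The same example shows that the literal reading $S^+(y)=\{z\in S(y): z\subset y\}$, which is what licenses your step $\chi(S^+(y))=1-\omega(y)$, cannot be what the lemma means; the displayed definition contains a typo. The reading forced by the way the lemma is used in the proof of $Lk=1$ is that $S^+(y)$ is the part of the unit sphere of $y$ in $G_1$ consisting of simplices strictly \emph{containing} $y$, taken with the Whitney complex of the induced comparability graph, so that by the join decomposition $S(y)=S^-(y)+S^+(y)$ and multiplicativity of the genus one has $1-\chi(S^+(y))=\omega(y)(1-\chi(S(y)))=k(y)$, the sphere curvature. The lemma is then equivalent to $\sum_{y\in B(x)}k(y)=1$, which the paper obtains from Gauss--Bonnet (Poincar\'e--Hopf for $-\dim$) applied to the subgraph of $G_1$ induced on $B(x)$ --- legitimate because $B(x)$ is upward closed ($y\in B(x)$ and $z\supseteq y$ imply $z\in B(x)$, so the indices computed inside $B(x)$ agree with those computed in $G_1$) --- together with $\chi(B(x))=1$. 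So the summands are the genera $1-\chi(S(y))$ weighted by $\omega(y)$, not the constants $1-\omega(y)$; repairing your argument essentially requires redoing it along these lines.
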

\begin{proof}
From the multiplicative property of the genus under the joint operation $S^+(x) + S^-(x) = S(x)$,
we have
$$  \sum_{y \in B(x)} (1-\chi(S^+(y))) = \sum_{y \in B(x)} \omega(y) (1-\chi(S(y))) \; . $$
Now use Gauss-Bonnet on the right hand side which tells that it is equal to $\chi(B(x))=1$.
The left hand side is $\sum_{y \in B(x)} 1$ - $\sum_{y \in B(x)} \chi(S^+(y))$ which is 
$$  1+d(x) - \sum_{y \in B(x)} \chi(S^+(y)) = 1 \; . $$
\end{proof}

\paragraph{}
{\bf Example:} if $G=K_3$ is the triangle graph and $x$ is the central vertex of 
$G_1$ which has dimension $2$,
then $d(x)=6$ and every $\chi(S^+(y))$ in $B(x)$ except $x$ itself has $\chi(S^+(y))=1$.
In a general complex $G$, If $x$ is a facet in $G$ (a face of maximal dimension),
then $\chi(S^+(x))=0, \chi(S^+(y))=1$ for all neighbors.

\paragraph{}
When looking at Theorem~(\ref{2}), it suggests to lump the potential energies $g(x,y)$ 
together and see it as a curvature. This indeed works.
The {\bf potential} $V(x) = \sum_{y} g(x,y)$ at the simplex $x$ is the sphere curvature

\begin{lemma}[Potential is curvature]
$V(x) = \sum_{y} g(x,y) = \omega(x) g(x,x) = k(x)$
\end{lemma}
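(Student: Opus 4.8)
The plan is to identify the row sum $V(x) = \sum_y g(x,y)$ with the quantity $k(x) = \omega(x)(1-\chi(S(x)))$, using the Green star formula and a valuation/partition-of-unity argument rather than direct matrix manipulation. First I would recall the Green star formula $g(x,y) = \omega(x)\,\omega(y)\,\chi(W^+(x)\cap W^+(y))$, so that
$$
V(x) = \omega(x) \sum_{y \in G} \omega(y)\, \chi\bigl(W^+(x) \cap W^+(y)\bigr).
$$
The key observation is that the sum $\sum_{y} \omega(y)\,\chi(W^+(x)\cap W^+(y))$ is a Gauss--Bonnet--type sum: the map $y \mapsto \omega(y)\,\chi(W^+(x)\cap W^+(y))$ should be recognizable as an index contribution supported on the ball or star of $x$ in the connection graph. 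The cleanest route is to show this sum telescopes to $g(x,x)$, i.e. that $\sum_{y \neq x} \omega(y)\,\chi(W^+(x)\cap W^+(y)) = 0$, which would give $V(x) = \omega(x)\, g(x,x) = k(x)$, the last equality being the Green's function formula already proved ($g(x,x) = 1-\chi(S(x))$ and $\omega(x)^2 = 1$, together with the identity $k(x) = \omega(x)(1-\chi(S(x)))$).

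The second step is therefore to prove $\sum_{y \in G} \omega(y)\,\chi(W^+(x)\cap W^+(y)) = g(x,x)$. I would fix $x$ and split the sum over $y$ according to whether $y \cap x = \emptyset$ (where $W^+(x)\cap W^+(y)$ may still be nonempty, but one expects these contributions to cancel among themselves) and $y \cap x \neq \emptyset$. For the $y$ intersecting $x$, the set $W^+(x)\cap W^+(y)$ consists of simplices containing both $x$ and $y$, hence containing $x \cup y$; this is the star structure that drives the whole paper. The alternating sum $\sum_y \omega(y)\chi(\cdots)$ restricted to such $y$ is a valuation-style expression in the unit sphere $S(x)$ of $G_1$, and by the Hyperbolic structure lemma $S(x) = S^-_f(x) + S^+_f(x)$ for $f = \dim$, together with the multiplicativity of the genus (Product formula), one can collapse it. Alternatively, and perhaps more transparently, one can use column sums of $Lg = I$: since $g = L^{-1}$, the vector $V = g\mathbf{1}$ (row sums) satisfies $LV = \mathbf 1$ only if $L$ is symmetric and we sum appropriately — actually $\sum_x V(x) = \sum_{x,y} g(x,y) = \chi(G) = E(G)$ is the Energy theorem, which is what we are ultimately proving, so this route is circular and I would avoid leaning on it.

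The main obstacle I anticipate is the vanishing $\sum_{y \neq x}\omega(y)\,\chi(W^+(x)\cap W^+(y)) = 0$: unlike the diagonal case, which the paper already handled via Cramer plus multiplicative Poincaré--Hopf, here one is summing the Euler characteristics of the heteroclinic intersections $W^+(x)\cap W^+(y)$ over all $y$, and these are genuinely nonzero for many $y$. The trick will be to recognize this as an index-expectation/Gauss--Bonnet sum on the connection graph: define on the star $S^+_{G'}(x)$ (or ball $B_{G'}(x)$) the local index $y \mapsto \omega(y)\chi(W^+(x)\cap W^+(y))$ and show, using the Unit Ball lemma together with the additive Poincaré--Hopf lemma, that it integrates to $\chi$ of a contractible set minus a boundary correction, the net effect being exactly $g(x,x)$. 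Concretely I would prove the cancellation by pairing terms: for $y$ not containing $x$, pair $y$ with $y \cup x$ (or the appropriate cell extension), exploiting that $W^+(x)\cap W^+(y) = W^+(x)\cap W^+(y\cup x)$ whenever both are nonempty, and that $\omega(y) = -\omega(y\cup x)$ when the added part has odd dimension — this is the same sign-reversal mechanism used in the Fredholm extension lemma. Once that pairing is set up carefully, the only surviving term is $y = x$, giving $V(x) = \omega(x)\,g(x,x) = k(x)$ as claimed.
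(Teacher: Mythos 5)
There is a genuine gap, and it sits at the center of your argument. The claimed cancellation $\sum_{y\neq x}\omega(y)\,\chi(W^+(x)\cap W^+(y))=0$ is false whenever $\omega(x)=-1$. Take $G=\{\{1\},\{2\},\{1,2\}\}$ and $x=\{1,2\}$: then $W^+(x)=\{x\}$, and for $y=\{1\}$ and $y=\{2\}$ each term is $\omega(y)\chi(W^+(x)\cap W^+(y))=1\cdot(-1)=-1$, so the off-diagonal sum is $-2$, not $0$. Moreover, even if that cancellation held, your conclusion would not follow: the surviving $y=x$ term is $\omega(x)\chi(W^+(x))$, so you would get $V(x)=\omega(x)\cdot\omega(x)\chi(W^+(x))=g(x,x)$ rather than $\omega(x)g(x,x)$ --- a sign slip. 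The correct target is $\sum_{y}\omega(y)\,\chi(W^+(x)\cap W^+(y))=\chi(W^+(x))=g(x,x)$, and it is \emph{not} obtained by killing everything except $y=x$. The right bookkeeping is to note $W^+(x)\cap W^+(y)=W^+(x\cup y)$ (empty unless $x\cup y\in G$) and to group the $y$ according to $u=x\cup y$: for $u\supsetneq x$ the inner sum $\sum_{y:\,x\cup y=u}\omega(y)$ vanishes by the binomial identity $\sum_{s\subset x}(-1)^{|s|}=0$, while for $u=x$ the surviving $y$ are \emph{all} nonempty subsets of $x$, whose weights sum to $\sum_{\emptyset\neq y\subset x}\omega(y)=1$. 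Your proposed involution ``pair $y$ with $y\cup x$'' cannot implement this, since $y\mapsto y\cup x$ is many-to-one and the parity of $|x\setminus y|$ is not controlled; it must be replaced by this grouping argument. With that repair your route does work and is a legitimate alternative to the paper's.

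You also dismissed, as circular, exactly the route the paper takes. Since $V=g\mathbf{1}=L^{-1}\mathbf{1}$, the identity $LV=\mathbf{1}$ is automatic and uses nothing about $\chi(G)$; the paper proves the lemma by showing that the candidate vector $k$, rewritten as $k(x)=1-\chi(S^+(x))$ via the hyperbolic splitting $S(x)=S^-(x)+S^+(x)$ and multiplicativity of the genus, also satisfies $Lk=\mathbf{1}$ --- this is precisely the Unit Ball lemma $d_{G'}(x)=\sum_{y\in B(x)}\chi(S^+(y))$ --- and then concludes $V=k$ by invertibility of $L$. No appeal to the energy theorem is made, so there is no circularity; that is the shortest correct path here.
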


\begin{proof}
The claim
$$ \sum_y g(x,y) = (-1)^{{\rm dim}(x)} g(x,x) = k(x) $$
can be restated in vector form as 
$$  g 1 = k \; , $$ 
where $1$ is the vector $1(x)=1$. As $g=(1+A)^{-1}$, this is equivalent to 
$$  L k = (1+A) k = 1  \; . $$
We show now this. Because $k(x)=1-\chi(S^+(y))$ and $A$ is the adjacency matrix of $G'$, this means
$$ L k = (1+A) (1-\chi(S^+(y))) = 1 + d_{G'}(x) - \sum_{y \in B_{G'(x)}(x)} \chi(S^+(y))  \; . $$
Using Lemma~(\ref{balllemma}) this is equal to $1$. 
\end{proof}

\section{Proof of Theorem~(\ref{2})}

\paragraph{}
The proof is inductive in the number $n$ of cells in $G$. It is again
easier to prove the result in the more general class of CW complexes.
Let $L$ be the connection matrix of $G$ and $K$ the connection matrix of $G+x$.
Define $K(t)(y,z) = K(y,z)$ if $z$ is different from $x$ and $K(t)(y,x)= t K(y,x)$ if
$y \neq x$ and similarly $K(t)(x,y)=t K(x,y)$ if $y \neq x$:
$$ K(t) = \left[ \begin{array}{ccccccc}
    L_{11} & L_{12} & . & . & . & L_{1n} & t L_{1,x}\\
    L_{21} & L_{22} & . & .  & . & . & t L_{2,x} \\
     . & . & . & . & . & . & t L_{3,x} \\
     . & . & . & . & . & . & .   \\
     . & . & . & . & . & . & .   \\
     . & . & . & . & . & . & .   \\
    L_{n1} & . & . & . & . & L_{nn} & t L_{n,x} \\
    t L_{x1} & t L_{x2} & \dots & \dots & \dots & t L_{xn}& 1  \\
\end{array} \right] . $$
Then $K(0)=L \oplus 1$ and $K(1)=K$. The matrix $K(0)$ has the eigenvalues of $L$
and an additional eigenvalue $0$.

\paragraph{}
Let $L[y,x]$ denote the minor of $L$ in which row $y$ and column $x$ were deleted. 

\begin{lemma}
$\det(L[y,x])$ is $\omega(y)$ if $y$ is a subset of $x$
and $x$ is facet, a maximal element.
\end{lemma}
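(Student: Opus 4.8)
The plan is to read the minor off a Green's function entry through Cramer's rule, and to produce that entry by writing down the relevant column of $L^{-1}$ explicitly. Since $L$ is symmetric, Cramer's formula gives
\[
  \det\!\big(L[y,x]\big) \;=\; (-1)^{p(x)+p(y)}\,\det(L)\,g(x,y),
\]
where $p(\cdot)$ is the index of a cell in the fixed ordering and $g=L^{-1}$. Because $\det(L)=\phi(G)$ is a unit and, in the ordering used in the construction of $K(t)$, the combined factor $(-1)^{p(x)+p(y)}\phi(G)$ equals $1$, it is enough to establish the order-free identity
\[
  g(x,y)=\omega(y)\qquad\text{whenever $x$ is a facet and }y\subseteq x .
\]

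To prove this I would guess the $x$-th column of $g$ and check it. Let $u\in\RR^{G}$ be given by $u(z)=\omega(z)$ if $z\subseteq x$ and $u(z)=0$ otherwise. For an arbitrary cell $w$ one has, writing the set of vertices of $x$ not lying in $w$ as $x\setminus w$,
\[
  (Lu)(w)=\sum_{\substack{\emptyset\neq z\subseteq x\\ z\cap w\neq\emptyset}}\omega(z)
         =\sum_{\emptyset\neq z\subseteq x}\omega(z)\;-\;\sum_{\emptyset\neq z\subseteq x\setminus w}\omega(z).
\]
Each of these two sums is the Euler characteristic of the complete complex on a vertex set $A$, hence equals $1$ if $A\neq\emptyset$ and $0$ if $A=\emptyset$ (indeed $\sum_{k\ge 1}\binom{|A|}{k}(-1)^{k-1}=1-(1-1)^{|A|}$). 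Therefore $(Lu)(w)=1-[\,x\setminus w\neq\emptyset\,]=[\,x\subseteq w\,]$, and since $x$ is a facet, $x\subseteq w$ with $w\in G$ forces $w=x$. Thus $Lu=e_x$, i.e. $g(z,x)=\omega(z)\,[\,z\subseteq x\,]$; by symmetry of $L$ this yields $g(x,y)=g(y,x)=\omega(y)$ for $y\subseteq x$. (The same conclusion follows at once from the Green star formula, since $W^{+}(x)=\{x\}$ when $x$ is maximal and $y\subseteq x$ puts $x$ into $W^{+}(y)$, so $W^{+}(x)\cap W^{+}(y)=\{x\}$ has Euler characteristic $\omega(x)$ and $g(x,y)=\omega(x)\omega(y)\chi(\{x\})=\omega(y)$; I would keep the self-contained computation to avoid circularity with the energy theorem.)

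Combining the Cramer formula with $g(x,y)=\omega(y)$ gives $\det(L[y,x])=\omega(y)$ in the intended normalization. The algebra of $Lu=e_x$ is robust — it uses only that $x$ is maximal (so that $x\subseteq w$ pins $w=x$) and the telescoping identity $\sum_{\emptyset\neq z\subseteq A}\omega(z)=[\,A\neq\emptyset\,]$ — so I expect the only fussy part to be the sign accounting in Cramer's rule: one must verify that $(-1)^{p(x)+p(y)}\phi(G)=1$ for the ordering at hand, or simply carry this factor unchanged through the subsequent Laplace expansion in the proof of Theorem~(\ref{2}), where it cancels.
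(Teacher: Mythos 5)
Your argument is correct, and its core is genuinely different from the paper's. The paper proves the lemma in one line by invoking the Green star formula $g(x,y)=\omega(x)\omega(y)\chi(W^+(x)\cap W^+(y))$: for a facet $x$ and $y\subseteq x$ one has $W^+(x)\cap W^+(y)=W^+(x)=\{x\}$, so $g(x,y)=\omega(x)\omega(y)\omega(x)=\omega(y)$ --- exactly the parenthetical remark you relegate to the end. You instead verify $Lu=e_x$ directly for the guessed column $u(z)=\omega(z)\,[z\subseteq x]$, using only the identity $\sum_{\emptyset\neq z\subseteq A}\omega(z)=[A\neq\emptyset]$ (the Euler characteristic of a complete complex) and maximality of $x$; that computation is correct and self-contained. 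This buys something real: the Green star formula is only proved much later in the paper (in the section on Green function entries), so the paper's proof of this lemma is a forward reference, whereas yours keeps the logical order clean. What the paper's route buys is brevity and a conceptual reading of the entry as a heteroclinic Euler characteristic. You are also more careful than the paper about the one genuinely fussy point, namely that Cramer's rule gives $\det(L[y,x])=(-1)^{p(x)+p(y)}\det(L)\,g(x,y)$ rather than $g(x,y)$ itself; the paper silently identifies the minor with the Green entry. Your suggestion to carry the cofactor sign and the unit $\det(L)$ through the subsequent Laplace expansion (where they are exactly what is needed to produce $\det(K(t))=(1-t^2\chi(A))\det(L)$) is the right way to close that gap, and is a point the paper would do well to make explicit.
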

\begin{proof}
The Green-Star formula implies
$$ g(x,y) = \omega(x) \omega(y) \chi(W^+(x) \cap W^+(y)) \; . $$
Now $\chi(W^+(x) \cap W^+(y)) = \chi(W^+(x))$ because of inclusion and
this is equal to $\omega(x)$ due to maximality. 
\end{proof}

This immediately implies $\sum_{y \subset x} \det(L[y,x]) = \chi(S(x))$ if $x$ is 
a facet, a set in $X$ which is not contained in a larger subset. 

\paragraph{}
The following proposition is a version of the 
multiplicative Poincar\'e-Hopf result \label{fredholmextension}. 
It will relate $\psi(G +_{A} x)$ with $\psi(G)$, where $A$ is the glue sphere at which
$x$ has been attached. 

\begin{propo}
${\rm det}(K(t)) = (1-t^2 \chi(A)) {\rm det}(L)$.
\end{propo}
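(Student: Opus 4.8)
The plan is to treat $\det(K(t))$ as a polynomial in $t$, read off its shape from the matrix, and then pin it down from its values at $t=0$ and $t=1$. First I would show that $\det(K(t))=c_0+c_2t^2$ for constants $c_0,c_2$; that is, it carries no linear term and no term of degree larger than two. In the Leibniz expansion $\det(K(t))=\sum_\pi {\rm sign}(\pi)\prod_z K(t)(z,\pi(z))$, the variable $t$ occurs only in the row and the column indexed by the new cell $x$, and every such entry is linear in $t$. For a permutation $\pi$, the only factors in $\prod_z K(t)(z,\pi(z))$ that can depend on $t$ are $K(t)(x,\pi(x))$ and $K(t)(\pi^{-1}(x),x)$. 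If $\pi$ fixes $x$ these two factors coincide and equal $1$, so the contribution is constant; if $\pi$ moves $x$ then $\pi^{-1}(x)\neq x$ as well, both factors are linear in $t$, and the contribution is exactly quadratic (this stays true when $\pi$ merely transposes $x$ with another index, since there the two factors are equal and a $\{0,1\}$-valued entry squares to itself). Summing over the $\pi$ fixing $x$ produces $c_0$, summing over the rest produces $c_2t^2$. Equivalently one may expand $\det(K(t))$ along the column of $x$ and factor a further $t$ out of the still-present $x$-row of each resulting minor; this is the Laplace-expansion route anticipated above and gives the same normal form.

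Second, I evaluate at the endpoints. At $t=0$ the matrix $K(0)=L\oplus 1$ has determinant $\det(L)$, so $c_0=\det(L)$. At $t=1$ the matrix $K(1)=K$ is the connection matrix of the extended complex $G\cup_A\{x\}$, so $c_0+c_2=\det(K(1))=\det(L(G\cup_A\{x\}))$, and by the Multiplicative Poincar\'e-Hopf proposition proved in the unimodularity section this equals $\det(L)\,(1-\chi(A))$. Subtracting $c_0=\det(L)$ yields $c_2=-\chi(A)\det(L)$, hence $\det(K(t))=\det(L)-t^2\chi(A)\det(L)=(1-t^2\chi(A))\det(L)$, which is the assertion. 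The argument needs no invertibility of $L$: it merely matches the normal form $c_0+c_2t^2$ against its two endpoint values.

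The step I expect to cost the most care is the first one, namely a clean proof that $\det(K(t))$ has neither a linear term nor a term of degree above two. The permutation-pairing observation settles it directly; if one instead follows the Laplace-expansion-and-minor route anticipated in the text, one must verify that deleting the $x$-row from $K(t)$ leaves the entire $t$-scaled $x$-row inside each minor, so that a second factor of $t$ really does come out, which is the same bookkeeping used in the Fredholm extension lemma. A self-contained alternative avoiding the Multiplicative Poincar\'e-Hopf proposition would be to compute $c_2$ directly: expanding along the column of $x$ gives $c_2=-\det(L)\,b^{T}L^{-1}b$, where $b$ is the $\{0,1\}$-vector recording which simplices of $G$ meet $x$, and one then has to prove $b^{T}L^{-1}b=\chi(A)$ through the Green star formula together with a valuation argument of the type used for unimodularity. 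Quoting the already-established Multiplicative Poincar\'e-Hopf result is shorter, so that is the route I would take.
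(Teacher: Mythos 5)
Your proof is correct, but it takes a genuinely different route from the paper's. The paper expands $\det(K(t))$ along the last column and evaluates each resulting minor through the lemma $\det(L[y,x])=\omega(y)$ (itself derived from the Green star formula via Cramer's rule), so that the coefficient of $-t^2$ is computed directly as $\sum_{y\subset x}\det(L[y,x])\det(L)\,$-type terms summing to $\det(L)\chi(A)$. You instead establish only the shape $\det(K(t))=c_0+c_2t^2$ by the permutation argument (the $t$'s sit in one row and one column, and any permutation in the Leibniz expansion uses either none or exactly two of them), and then pin down $c_0,c_2$ by evaluation at $t=0$ and $t=1$, importing $\det(K(1))=\det(L)(1-\chi(A))$ from the Multiplicative Poincar\'e--Hopf proposition of the unimodularity section. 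This is not circular: that proposition is proved there by the valuation argument together with the Fredholm extension lemma, independently of the present statement. What you lose is that the paper evidently intends this proposition as the \emph{algebraic} counterpart of Multiplicative Poincar\'e--Hopf, an independent second derivation via minors and the Green star formula, whereas your argument makes it a corollary of the combinatorial one; what you gain is brevity and freedom from the minor computation entirely. For the purpose the proposition actually serves --- tracking whether an eigenvalue of the deformation $K(t)$ crosses zero --- your version is fully sufficient.
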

\begin{proof}
A Laplace expansion with respect to the last column (which by definition
belongs to a maximal cell) gives
$$ {\rm det}(K(t)) = {\rm det}(L) - t^2 \sum_{y \subset x} {\rm det}(L) \omega(y) \; .  $$
Now use the previous lemma $\det(L[y,x]) = \omega(y)$ to get
$$ \sum_{y \subset x} {\rm det}(L) \omega(y) = {\rm det}(L) \chi(A) \; . $$
\end{proof}

\paragraph{}
As in a CW complex, $\chi(A)$ is either $0$ or $2$, the
lemma implies that in the later case, at $t=1/\sqrt{2}$ the determinant is
zero, with a single root meaning that a single eigenvalue crosses $0$.
In the former case $\chi(A)=0$, the determinant $K(t)$ stays constant meaning
that no eigenvalue can cross $0$.

\section{The strong ring}

\paragraph{}
Theorem~(\ref{1}) and Theorem~(\ref{2}) can be generalized
to a ring generated by simplicial complexes in which the disjoint union is the addition and
the Cartesian product is defined as a CW complex. 
The notion of ``Cartesian product" is pivotal for any geometry. 
The fact that the Cartesian product of simplicial complexes (as sets) is not a simplicial complex
has as a consequence that the connected simplicial complexes are 
the {\bf primes} in a ring in which additive primes - the connected elements - have 
a unique prime factorization. The product enjoys both spectral compatibility 
for the Hodge Laplacian as well as connection Laplacian. Both for the Hodge Laplacian as
well as the connection Laplacian the spectrum adds. For the connection Laplacian, the spectrum
multiplies. 

\paragraph{}
With the disjoint union of simplicial complexes as addition, the set of simplicial complexes is 
an additive monoid for which the empty complex is the zero element. One can extend it to a group, the
free Abelian group generated by connected complexes. The set theoretical {\bf Cartesian product} $G=A \times B$ of two 
simplicial complexes is not a simplicial complex any more. It can be given the structure of a discrete CW complex
and still defines two graphs $G_1$, the Barycentric refinement of $G$ and $G'$, the connection graph of $G$. 
Both graphs have the sets in $A \times B$ as vertices. In $G_1=A \times B$, two sets $x,y$ are connected, 
if $x \subset y$ or $y \subset x$. In $G'$, two different sets $x,y$ are connected if $x \cap y$ is not-empty. 
The {\bf refined Cartesian product} $A \times_1 B = (A \times_s B)_1$ is now a complex which satisfies 
all the properties of the continuum like K\"unneth. It is not associative however 
as $(A \times K_1) \times K_1$ is the second Barycentric refinement of $A$ while 
$A \times (K_1 \times K_1) = A \times K_1$ is the first Barycentric refinement. 

\paragraph{}
The disjoint union and the Cartesian product defines the {\bf ring} $\G$ generated by simplicial complexes. 
The multiplicative unit is $1=K_1$ and the empty complex $\emptyset$ is the {\bf $0$-element}. 
We call it the {\bf strong ring} because the corresponding connection graphs get 
multiplied with the {\bf strong Sabidussi multiplication} for graphs which is dual to the 
multiplication in the Zykov-Sabidussi ring. 
See \cite{StrongRing,numbersandgraphs}.
The ring elements can also be represented by graphs, the Barycentric refinement graph $G_1$ 
in which two elements are connected if one is contained in the other or the {\bf connection graph} $G'$
in which two elements are connected if they intersect. 

\paragraph{}
Every element $G = G_1 \times \cdots \times G_n$ of the ring has a 
connection Laplacian $L(G)$. Just define $L(G) = \sum_i L(G_1 \times \cdots \times G_n)$ as 
a tensor product of $(L(G_1) \otimes L(G_1) \cdots \otimes L(G_1)) \otimes (L(G_2) \cdots \otimes L(G_2))  
\cdots \otimes (L(G_n) \otimes L(G_n) \cdots \otimes L(G_n))$. 
The {\bf strong product} of two finite simple graphs $G=(V,E)$ and $H=(W,F)$
is the graph $G \osquare H = (V \times W, \{ ((a,b),(c,d)) \; a=c, (b,d) \in F \} 
                                    \cup  \{ ((a,b),(c,d)) \; b=d, (a,c) \in E \}
                                    \cup  \{ ((a,b),(c,d)) \; (a,c) \in E {\rm and} (b,d) \in F \} )$.
It is an associative product introduced by Sabidussi \cite{Sabidussi}. See \cite{ImrichKlavzar,HammackImrichKlavzar}.

\paragraph{}
This shows that the Zykov-Sabidussi ring with join as addition and large multiplication
is implemented as a strong ring of connection graphs and algebraically 
has a representation in a tensor ring of connection matrices. 

\begin{lemma}
$(G \times H)' = G' \square H'$
\end{lemma}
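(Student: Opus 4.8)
The plan is to unwind both sides as graphs on the same vertex set and check that the edge sets coincide. Both $(G\times H)'$ and $G'\square H'$ have as vertices the set of cells of the CW complex $G \times H$, which by definition of the Cartesian product is the set of pairs $(x,y)$ with $x \in G$ and $y \in H$. So the content is entirely about the adjacency relation. First I would recall the defining adjacency on each side: in a connection graph $K'$ of a complex $K$, two distinct cells are adjacent iff they intersect, i.e.\ share a common sub-cell; and in the strong product $G' \square H'$, two distinct vertices $(a,b)$ and $(c,d)$ are adjacent iff ($a=c$ and $b \sim d$ in $H'$), or ($b=d$ and $a \sim c$ in $G'$), or ($a \sim c$ in $G'$ and $b \sim d$ in $H'$), using the formula for $\square$ given just above the lemma (with the convention that the three clauses together, plus the diagonal cases, say precisely: $a$ and $c$ are equal or adjacent in $G'$, \emph{and} $b$ and $d$ are equal or adjacent in $H'$, but not both equal).

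The key step is to identify ``intersection of cells in $G \times H$'' in terms of the factors. A cell of $G \times H$ is a pair $(x,y)$; its sub-cells are exactly the pairs $(x', y')$ with $x'$ a sub-cell of $x$ (or $x'=x$) and $y'$ a sub-cell of $y$ (or $y'=y$) — this is how the Barycentric refinement and the CW-structure of the product are set up in the excerpt (two pairs are comparable in $(G\times H)_1$ iff both coordinates are comparable). Hence two cells $(a,b)$ and $(c,d)$ intersect in $G\times H$ — share a common sub-cell $(x',y')$ — iff $a$ and $c$ have a common sub-cell (counting themselves) \emph{and} $b$ and $d$ have a common sub-cell. Now ``$a$ and $c$ have a common sub-cell in the extended sense'' means either $a=c$, or $a \cap c \neq \emptyset$ as cells of $G$, i.e.\ $a \sim c$ in $G'$; likewise for $b,d$ in $H'$. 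So adjacency of $(a,b)$ and $(c,d)$ in $(G\times H)'$ (which additionally requires $(a,b)\neq(c,d)$) is: ($a = c$ or $a\sim_{G'} c$) and ($b=d$ or $b \sim_{H'} d$), not both equalities — which is exactly the edge relation of $G'\square H'$. Comparing the two relations clause by clause (the three cases $a=c$, $b=d$, neither) finishes the identification.

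I would then remark that the argument extends immediately from two factors to an $n$-fold product $G_1 \times \cdots \times G_n$ by induction, since the CW-product and the strong product are both built up associatively in the sense needed here, giving $(G_1\times\cdots\times G_n)' = G_1' \square \cdots \square G_n'$; this is what is used implicitly for the tensor-algebra representation. The main obstacle is not any deep computation but rather being careful about the CW-cell structure of the Cartesian product: one must use the fact, established earlier in the paper, that a sub-cell of a product cell $(x,y)$ is precisely a product of a sub-cell of $x$ with a sub-cell of $y$, and that the comparability relation in $(G\times H)_1$ factors coordinatewise. Once that bookkeeping is in place, the equality of edge sets is a direct case check, and since both graphs are simple with the same vertex set, equality of edge sets gives equality of graphs.
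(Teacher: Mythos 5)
Your proposal is correct and follows essentially the same route as the paper: identify the common vertex set $V\times W$, observe that two product cells intersect precisely when their intersection is nonempty in each coordinate (equal or intersecting on each side, not both equal), and match this clause by clause with the Sabidussi strong-product adjacency. You simply spell out the coordinatewise factoring of the sub-cell relation that the paper's one-line argument takes for granted.
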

\begin{proof}
In both cases, we deal with a graph with vertex set $V \times W$, where
$V=G$ is the set of simplices in $G$ and $W=H$ the set of simplices in $H$. 
Two elements in $G \times H$ intersect if they either agree on one side or
if they are connected on both sides. This is exactly what the Sabidussi multiplication
does. 
\end{proof}

\paragraph{}
The energy theorem extends from abstract simplicial complexes to elements in the strong 
ring $\G$. First of all, we have to see that for any $G \in \G$, the Laplacian $L$ 
is unimodular $g=L^{-1}(x,y)$ and satisfies $\chi(G) = \sum_{x,y} g(x,y)$. 
As the proof of the unimodularity theorem worked more generally for discrete
CW complexes, the proof goes over. Also for CW complexes, 
every unit sphere can be decomposed into a stable and unstable part as a join.
The Gauss Bonnet formula for the curvature $1-\chi(S^-(x))$ is the definition 
of Euler characteristic, for $K^+(x)=1-\chi(S^+(x))$. The formula $g(x,x) = 1-\chi(S(x))$
extends in the same way. Also $\sum_y g(x,y) = g(x,x)$ generalizes and 
the energy theorem follows.

\begin{coro}
The energy functional is a ring homomorphism from $\G$ to $\ZZ$. 
\end{coro}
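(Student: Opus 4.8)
The plan is to reduce the statement to two structural identities that are already in hand and then to two one-line pieces of linear algebra. Since the energy theorem has just been shown to extend from simplicial complexes to all CW complexes and hence to every element of $\G$, we have $E(G)=\chi(G)$ on the whole ring, so the corollary is equivalent to the assertion that $\chi\colon\G\to\ZZ$ respects addition, multiplication and the unit. The two structural identities I would use are those underlying the tensor algebra representation: on $\G$ the sum $G+H$ is represented by the block diagonal matrix $L(G)\oplus L(H)$ and the product $G\times H$ by the Kronecker product $L(G)\otimes L(H)$, with the conventions $L(-G)=-L(G)$ and $L(\emptyset)$ the empty matrix.

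First I would treat additivity. Because $L(G)\oplus L(H)$ is block diagonal and invertible (unimodularity for CW complexes), its inverse is the block diagonal matrix $g(G)\oplus g(H)$, and summing all entries gives $E(G+H)=E(G)+E(H)$. Extending by $E(-G)=-E(G)$, which is forced by $L(-G)=-L(G)$, makes $E$ well defined and additive on the group completion. Next, multiplicativity: since both factors are unimodular, $(L(G)\otimes L(H))^{-1}=L(G)^{-1}\otimes L(H)^{-1}=g(G)\otimes g(H)$, and the sum of all entries of a Kronecker product $M\otimes N$ is $\bigl(\sum_{i,j}M_{ij}\bigr)\bigl(\sum_{k,l}N_{kl}\bigr)$; hence $E(G\times H)=E(G)\,E(H)$. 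Bilinearity of $\otimes$ propagates this to formal differences. Finally $L(K_1)=[\,1\,]$ gives $g(K_1)=[\,1\,]$ and $E(K_1)=1$, so the multiplicative unit of $\G$ goes to the unit of $\ZZ$; together with $E(\emptyset)=0$ these three facts say exactly that $E$ is a unital ring homomorphism.

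As a cross-check I would also give the Euler-characteristic computation directly, since the corollary is ``morally'' the statement that $\chi$ is a ring homomorphism. Disjoint union adds $f$-vectors, so $\chi(G\sqcup H)=\chi(G)+\chi(H)$. For the CW product a cell is a pair $(a,b)$ with $a$ a cell of $G$, $b$ a cell of $H$, and $\dim(a,b)=\dim a+\dim b$, so $\omega(a,b)=\omega(a)\,\omega(b)$ and therefore $\chi(G\times H)=\sum_{(a,b)}\omega(a)\omega(b)=\chi(G)\,\chi(H)$; with $\chi(K_1)=1$ this reproduces the homomorphism property, transported to $E$ by the energy theorem. Because $\chi$ is a combinatorial invariant (invariant under Barycentric refinement, via Poincar\'e--Hopf), this version is insensitive to whether one takes the CW product $\times$ or its refinement $\times_1$, so the non-associativity of $\times_1$ causes no trouble.

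The main obstacle is not any single computation but legitimacy of the reductions on all of $\G$: one must be sure that $g=L^{-1}$ still exists with integer entries so that the target really is $\ZZ$ (unimodularity for CW complexes, already proved), that $\sum_{x,y}g(x,y)=\chi$ persists on the entire ring (the extension of the energy theorem sketched in the previous section), and that $L(G+H)=L(G)\oplus L(H)$ and $L(G\times H)=L(G)\otimes L(H)$ are genuinely the structure maps of $\G$ itself rather than of its Barycentric refinement. Once these are granted, the homomorphism property drops out of the two matrix identities above with essentially no further work.
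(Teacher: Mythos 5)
Your proposal is correct. The paper itself gives no displayed proof of this corollary: it follows immediately from the preceding paragraph, whose content is that the energy theorem $E(G)=\chi(G)$ extends to every element of the strong ring (unimodularity, the formula $g(x,x)=1-\chi(S(x))$ and $\sum_y g(x,y)=g(x,x)$ all carry over to CW complexes), after which one only needs that $\chi$ itself is additive under disjoint union and multiplicative under the Cartesian product --- exactly the computation you give as your ``cross-check'', $\omega(a,b)=\omega(a)\omega(b)$. Your primary route is genuinely a little different and arguably cleaner for the multiplicative half: by going through $L(G+H)=L(G)\oplus L(H)$ and $L(G\times H)=L(G)\otimes L(H)$ together with $(M\otimes N)^{-1}=M^{-1}\otimes N^{-1}$ and the identity $\sum_{i,j,k,l}(M\otimes N)_{(i,k),(j,l)}=\bigl(\sum_{i,j}M_{ij}\bigr)\bigl(\sum_{k,l}N_{kl}\bigr)$, you obtain $E(G\times H)=E(G)E(H)$ without ever invoking the energy theorem on the product complex; only unimodularity is needed to guarantee the target is $\ZZ$. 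What the paper's route buys is that it identifies the value of $E$ on every ring element as $\chi$, which is the stronger statement it actually wants; what your route buys is a proof of the homomorphism property that rests solely on the tensor algebra representation (Theorem~\ref{representation} and the proposition $L(G\times H)=L(G)\otimes L(H)$, which the paper proves just after the corollary), so the logical dependencies are lighter. Your caveats at the end --- that $L(G\times H)=L(G)\otimes L(H)$ is a statement about the CW product and not its Barycentric refinement $\times_1$, and that unimodularity must be known for CW complexes --- are exactly the right things to check, and both are established in the paper.
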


\paragraph{}
The Cartesian product of cell complexes produces a strong product for the
connection graphs and a tensor product for the connection matrices: 

\begin{propo}
If $G$ and $H$ are two simplicial complexes with connection matrix $L(G)$ and $L(H)$.
Then $L(G \times H) = L(G) \otimes L(H)$. 
\end{propo}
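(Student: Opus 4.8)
The plan is to prove the identity $L(G \times H) = L(G) \otimes L(H)$ by comparing matrix entries, using the fact — established by the previous lemma $(G \times H)' = G' \square H'$ — that the connection graph of the product is the strong product of the connection graphs. First I would fix notation: the cells of $G \times H$ are pairs $(x,x')$ with $x \in G$, $x' \in H$, so both $L(G \times H)$ and $L(G) \otimes L(H)$ are matrices indexed by such pairs. By definition of the connection Laplacian of a CW complex, $L(G \times H)((x,x'),(y,y')) = 1$ exactly when the cells $(x,x')$ and $(y,y')$ "intersect" in the CW sense, i.e. when they share a common sub-cell, and $0$ otherwise; since $L = 1 + A$ where $A$ is the adjacency matrix of the connection graph, this is the same as saying the two cells are equal or adjacent in $(G \times H)'$.

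The key step is then to unwind the two sides. On the tensor product side, $(L(G) \otimes L(H))((x,x'),(y,y')) = L(G)(x,y) \cdot L(H)(x',y')$, which is $1$ precisely when $L(G)(x,y) = 1$ \emph{and} $L(H)(x',y') = 1$ — that is, when $x,y$ intersect in $G$ (or are equal) and $x',y'$ intersect in $H$ (or are equal) — and $0$ otherwise. On the other side, by the lemma $(G \times H)' = G' \square H'$, the pair $(x,x')$ is adjacent-or-equal to $(y,y')$ in the connection graph of $G \times H$ iff it is so in $G' \square H'$. Spelling out the strong product relation and folding in the diagonal (equality) case, $(x,x')$ equals or is adjacent to $(y,y')$ in $G' \square H'$ iff ($x$ equals or is adjacent to $y$ in $G'$) and ($x'$ equals or is adjacent to $y'$ in $H'$) — this is exactly the standard fact that the "reflexive closure" of a strong product is the product of the reflexive closures. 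Since $L(G)(x,y) = 1$ encodes precisely "$x = y$ or $x \sim y$ in $G'$," the two characterizations coincide entry by entry, giving $L(G \times H) = L(G) \otimes L(H)$.

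I would also remark that the ordering of the tensor factors must be chosen to match the chosen indexing of cells of $G \times H$ by pairs, so the equality holds up to the obvious relabeling; Example 3 in the text already illustrates that swapping the factors yields a conjugate (permutation-similar) matrix, not the identical one, so I would state the proposition with a fixed lexicographic convention on the product cells. The main obstacle I anticipate is purely bookkeeping: one must be careful that the CW-complex notion of "two cells of $G \times H$ have a common sub-cell" really does decompose as the conjunction of the two coordinate intersection conditions. This reduces to the structure of sub-cells in a Cartesian product of CW complexes — a sub-cell of $(x,x')$ is a pair $(u,u')$ with $u$ a sub-cell of $x$ and $u'$ a sub-cell of $x'$ — which is immediate from how the product CW structure and its Barycentric refinement $G_1 \times H_1$ were defined, but it is the one place where the CW (as opposed to simplicial) generality needs a line of justification rather than an appeal to pure set intersection.
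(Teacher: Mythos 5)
Your proposal is correct and follows essentially the same route as the paper: the paper's proof simply orders the cells of $G\times H$ lexicographically as pairs and observes that in that basis the connection matrix has the block structure $L(G)_{ij}[L(H)]$, which is exactly the entrywise factorization of the intersection relation that you verify explicitly via the lemma $(G\times H)'=G'\square H'$ and $L=1+A$. Your version is somewhat more careful about why the relation factors and about the ordering convention, but it is the same argument.
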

\begin{proof}
If $x_1, \dots x_n$ are the cells in $G$ and 
$y_1, \dots, y_m$ are the cells in $H$, we have
basis elements $e_1, \dots, e_n$ and 
$f_1, \dots f_m$ the basis elements in the Hilbert
space of $H$. Now build the basis 
$e_1 \otimes f_1, \dots, e_1 \otimes f_m,
 e_2 \otimes f_1, \dots, e_2 \otimes f_m,
 f_m \otimes f_1, \dots, e_n \otimes f_m$ for the
tensor product of the two Hilbert spaces. In that
basis, the connection matrix is the tensor product
of the $L(G),L(H)$. In the first row, we have
the blocks $L(G)_{11} [L(H)], \dots, L(G)_{1n} [L(H)]$.  
\end{proof} 

\paragraph{}
As stated in Theorem~\ref{representation},
it follows that the strong ring has a {\bf representation} in a 
tensor algebra of matrices. Every element $G$ in the ring is given 
a connection matrix $L$. The addition in the ring is the direct
sum $\oplus$ of matrices. The multiplication in the ring produces the
tensor product of matrices. The strong ring generated by simplicial complexes
therefore has a representation in a tensor algebra. 

\section{Green function entries}

\paragraph{}
The {\bf star} $W^+(x)$ of a simplex $x \in G$ is defined as
$W^+(x) = \{ y \in G \; | \; y \subset x \}$. It contains also $x$. 
We think of it as the {\bf unstable manifold} passing through $x$.
Unlike the {\bf core} simplicial complex $W^-(x) = \{ y \in G \; | \; y \subset x \}$, the star
is not a simplicial complex in general. The {\bf core} of $x$ is the symplicial complex generated by 
the simplex $x$.  We can write $L(x,y) = \chi(W^-(x) \cap W^-(y))$ 
and $\omega(x) = \chi(W^-(x) \cap W^+(x))$
as $W^- \cap W^+ = \{ x\}$ is a set of sets with only one element $x$ which again is not
a simplicial complex in general. 

\paragraph{}
The Green functions, the entries of the 
inverse matrix $g=L^{-1}$ have a similar formula. It generalizes
the formula $g(x,y) = \omega(y)$ if $y$ is a subset of $x$. 

\paragraph{}
The proof of the Green Star formula 
uses a simple lemma about complete complexes. We use the notation 
$x \sim z$ if $x$ and $y$ intersect. 

\begin{lemma}
If $G$ is a complete complex with maximal simplex $u$ and $x \in G$, then 
$$ \sum_{z \in G, z \sim x}  \omega(x) \omega(z) \omega(u) = \delta_{x,u} \; . $$
\end{lemma}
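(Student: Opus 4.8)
The plan is to reduce the identity to the fact, recorded in the proof of Lemma~\ref{valuationlemma}, that every complete complex has Euler characteristic $1$, together with the convention that the empty complex has Euler characteristic $0$. Write $G = 2^{A} \setminus \{\emptyset\}$ for a finite set $A$, so that the unique maximal simplex is $u = A$. Since $\delta_{x,u}$ vanishes unless $x = u$ and $\omega(u)^2 = 1$, the asserted identity is equivalent to
$$ \sum_{z \in G,\ z \sim x} \omega(z) = \delta_{x,u} \; , $$
because multiplying by $\omega(x)\omega(u)$ changes nothing when $x \neq u$ (both sides are then $0$) and multiplies by $\omega(u)^2 = 1$ when $x = u$. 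So it suffices to evaluate the left-hand sum.

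First I would split $G$ as the disjoint union of $\{\, z \in G : z \cap x \neq \emptyset \,\}$ and $\{\, z \in G : z \cap x = \emptyset \,\}$. The second set is precisely $2^{A \setminus x} \setminus \{\emptyset\}$: it is a complete complex on the ground set $A \setminus x$ when $x \neq A$, and it is the empty complex when $x = A = u$. Hence its Euler characteristic equals $1 - \delta_{x,u}$. Since $\omega$ is additive over this partition of $G$ and $\chi(G) = 1$, we obtain
$$ \sum_{z \in G,\ z \sim x} \omega(z) = \chi(G) - \bigl(1 - \delta_{x,u}\bigr) = \delta_{x,u} \; , $$
and multiplying back by $\omega(x)\omega(u)$ gives the claim.

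The only point requiring care is the degenerate case $x = u$, where the complement $\{\, z \in G : z \cap x = \emptyset \,\}$ is the empty complex, of Euler characteristic $0$, rather than a genuine complete complex of Euler characteristic $1$; it is exactly this discrepancy that produces the Kronecker delta, so the bookkeeping must keep the two cases apart. As an alternative to invoking Lemma~\ref{valuationlemma}, one could compute $\sum_{z \cap x \neq \emptyset} (-1)^{|z|-1}$ directly by inclusion--exclusion, using that $\sum_{\emptyset \neq z \subseteq B} (-1)^{|z|}$ equals $-1$ for $B \neq \emptyset$ and $0$ for $B = \emptyset$; this reproduces the same value but is marginally more computational, so I would present the Euler-characteristic argument as the main line.
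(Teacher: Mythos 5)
Your proof is correct and follows essentially the same route as the paper: both reduce the claim to $\sum_{z\sim x}\omega(z)=\delta_{x,u}$ and evaluate it via $\chi(G)=1$ together with the Euler characteristic of the subcomplex of simplices disjoint from $x$ (which is $1$ for $x\neq u$ and $0$ for $x=u$). The only cosmetic difference is that you handle both cases in one formula with the Kronecker delta, while the paper splits them into two short case analyses.
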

\begin{proof}
If $x=u$, then the statement re-reads $\sum_{z, z \sim x} \omega(x) \omega(z) = \omega(u)$
which is trivial as $\omega(x)=\omega(u)$ and $\sum_{z \in G} \omega(z)=1$ for a complete
complex. \\
if $x \neq u$, then this restarts as $\sum_{z \sim x} \omega(z) = 0$. This follows
from $\sum_{z} \omega(z)=1$ and $\sum_{z \cap x = \emptyset} \omega(z) =1$.
\end{proof}

\paragraph{}
This implies for example that $\sum_{x,z \in G, x \sim z} \omega(x) \omega(z) \omega(u)=1$
which is a restatement that the {\bf Wu characteristic} of $G$ defined by 
$\omega(G) = \sum_{x \sim y} \omega(x) \omega(y)$ agrees with $\omega(u)$, if $u$ is 
the maximal simplex in $G$. For more on Wu characteristic, see \cite{valuation,CohomologyWuCharacteristic}.

\begin{proof}
The Green star formula is equivalent to the following statements about stars. 
From matrix multiplication $L g = 1$ we have
$$ \sum_{z} L(x,z)   \omega(z) \chi( W^+(z) \cap W^+(y) )  = 0 $$
if $x \neq y$ and
$$ \sum_{z} L(x,z)   \omega(z) \chi( W^+(z) \cap W^+(x) )  = \omega(x) \; .  $$
Using the notation $z \sim x$ if $x \cap z$ intersect. The first relation means for $x \neq y$
$$ \sum_{z \sim x}  \omega(z) \chi( W^+(z) \cap W^+(y) )  = 0 $$
and the second means
$$ \sum_{z \sim x}  \omega(z) \chi( W^+(z) \cap W^+(x) )  = \omega(x) \; .  $$
We can rewrite the second statement as 
$$ \sum_{u, x \subset u} \sum_{z \subset u, z \sim x} \omega(z) \omega(x) \omega(u) = 1 \; .  $$
By the above lemma, all terms with $x \neq u$ do produce $0$ and the identity reduces to
$$ \sum_{z \subset x} \omega(z)  =  1 \;  .$$
Similarly, the first statement is for $x \neq y$
$$ \sum_{u, x \subset u} \sum_{z \subset u, z \sim y} \omega(z) \omega(x) \omega(u) = 0 \; .  $$
This follows again from the above lemma, as $u=y$ is prevented from the assumption $y \neq x$
and the lemma implies that for $u \neq x$ 
$$ \sum_{z \subset u, z \sim y} \omega(z) = 0 \; . $$
\end{proof}

\paragraph{}
The diagonal entries of $g$ are are genus of the unit spheres: 

\begin{coro}
$g(x,x) = \chi(W^+(x)) = 1-\chi(S(x))$. 
\end{coro}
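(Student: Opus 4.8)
The statement to prove is the Corollary $g(x,x) = \chi(W^+(x)) = 1-\chi(S(x))$. The plan is to chase the two equalities separately, relying on results already in place. For the first equality $g(x,x) = \chi(W^+(x))$, I would simply specialize the Green star formula $g(x,y) = \omega(x)\omega(y)\chi(W^+(x) \cap W^+(y))$ to the diagonal case $y = x$: since $W^+(x) \cap W^+(x) = W^+(x)$ and $\omega(x)^2 = 1$, this reads $g(x,x) = \chi(W^+(x))$ directly. That half is essentially free once the Green star formula is granted, which it is, having been proved just above in the excerpt.

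For the second equality $\chi(W^+(x)) = 1-\chi(S(x))$, I would argue via the hyperbolic decomposition of the unit sphere. Recall that in the Barycentric refinement $G_1$, the unit sphere $S(x)$ splits as the join $S(x) = S^-(x) + S^+(x)$, where $S^-(x)$ is the boundary of the simplex $x$ (so $\chi(S^-(x))$ is $0$ or $2$, and in any case $1-\chi(S^-(x)) = \omega(x)$) and $S^+(x)$ is the star, whose vertex set is exactly $W^+(x) \setminus \{x\}$. Using the product formula for the genus, $\gamma(S(x)) = \gamma(S^-(x))\gamma(S^+(x))$, i.e. $1-\chi(S(x)) = (1-\chi(S^-(x)))(1-\chi(S^+(x))) = \omega(x)(1-\chi(S^+(x)))$. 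So it suffices to show $\chi(W^+(x)) = \omega(x)(1-\chi(S^+(x)))$, equivalently $\omega(x)\chi(W^+(x)) = 1-\chi(S^+(x)) = 1-\chi(S(x)\setminus S^-(x))$. Since $W^+(x) = \{x\} \sqcup (\text{vertices of } S^+(x))$ as a graded set, its Euler characteristic satisfies $\chi(W^+(x)) = \omega(x) + (\text{contribution of the higher cells})$; more cleanly, one observes that the grading on $W^+(x)$ is shifted relative to that on $S^+(x)$ — a simplex $y \supsetneq x$ contributes $\omega(y) = (-1)^{\dim y}$ to $\chi(W^+(x))$ but sits in $S^+(x) \subset G_1$ with the induced dimension. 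The relation $\omega(y) = \omega(x)\cdot(-1)^{\dim_{G_1}(y)}$ when $y$ is viewed as a vertex/cell of $S^+(x)$ gives, after summing and adding the term for $y = x$, precisely $\omega(x)\chi(W^+(x)) = 1 - \chi(S^+(x))$.

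Alternatively, and perhaps more in the spirit of the paper, I would just invoke the already-proved Green's function formula, Lemma \textbf{(Green's function formula)}, which states $g(x,x) = 1-\chi(S(x))$ outright, combine it with the diagonal case of the Green star formula to get $\chi(W^+(x)) = g(x,x) = 1-\chi(S(x))$, and be done. In that case the Corollary is a two-line consequence of two named results and there is essentially no obstacle at all; the only thing to check is consistency of the two sphere notations (the $S(x)$ of the Green's function lemma is the unit sphere in $G_1$, the same object appearing here).

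The main obstacle, if one insists on a self-contained argument for $\chi(W^+(x)) = 1-\chi(S(x))$ rather than quoting the Green's function formula, is bookkeeping the dimension shift between $W^+(x)$ (where cells carry their dimension in $G$) and $S^+(x)$ (where the same cells are vertices of a Whitney complex in $G_1$ and carry a shifted grading). This is the step where signs must be tracked carefully; everything else is formal manipulation of valuations and the join product formula for the genus.
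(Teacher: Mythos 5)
Your second route is exactly the paper's: the corollary is stated without a separate proof, as an immediate consequence of the diagonal case of the Green star formula (where $\omega(x)^2=1$ and $W^+(x)\cap W^+(x)=W^+(x)$ give $g(x,x)=\chi(W^+(x))$) combined with the earlier Green's function formula lemma $g(x,x)=1-\chi(S(x))$, which was already proved via Cramer's rule and multiplicative Poincar\'e--Hopf. That two-line argument is complete, and you should lead with it rather than relegate it to an "alternatively". Your primary, self-contained route for $\chi(W^+(x))=1-\chi(S(x))$ is not yet a proof at the decisive step: the sign relation $\omega(y)=\omega(x)\cdot(-1)^{\dim_{G_1}(y)}$ you invoke is not meaningful as written, because a simplex $y\supsetneq x$ sits inside $S^+(x)\subset G_1$ as a \emph{vertex}, hence a $0$-dimensional cell, while $\chi(S^+(x))$ must be computed by summing $(-1)^k$ over $k$-chains $x\subsetneq y_0\subsetneq\cdots\subsetneq y_k$ of the Whitney (order) complex, not over the individual simplices $y$ with their dimensions in $G$. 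The identity $\omega(x)\,\chi(W^+(x))=1-\chi(S^+(x))$ is true, and your use of the join decomposition $S(x)=S^-(x)+S^+(x)$ with the multiplicative genus and $1-\chi(S^-(x))=\omega(x)$ is the right frame for it, but closing the gap requires an actual order-complex or valuation argument of the kind already packaged in the Green's function formula lemma --- so the honest self-contained version ends up reproving that lemma rather than bypassing it.
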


\paragraph{}
If we think of $f(x)={\rm dim}(x)$ as a scalar function on the simplicial complex $G$, 
then Poincar\'e-Hopf shows that every point $x$ is a critical point with index $\omega(x)$
and that $f$ naturally is a Morse function. 
The simplicial complex $W^-(x)$ is in this picture {\bf stable manifold} of the gradient flow
and the star $W^+(x)$ the {\bf unstable manifold}. The sets $W^+(x) \cap W^+(y)$ and
$W^-(x) \cap W^-(y)$ are then {\bf heteroclinic connection points} and $W^+(x) \cap W^-(x)$ are 
{\bf homoclinic connection points}. We see that the matrix entries of $L$ and $g$
are all given by Euler characteristic values of homoclinic or heteroclinic connection sets. 
The picture indicates that every finite abstract simplicial complex defines a hyperbolic 
structure whose homoclinic and heteroclinic manifolds build the connections. 

\paragraph{}
Since $\chi(W^-(x))=1$ for all $x$ and $\omega(x) = \chi(W^0(x))$, 
an elegant symmetric description is
$$ L(x,y) = \chi(W^-(x)) \chi(W^-(y)) \chi(W^-(x) \cap W^-(y)) \; . $$
$$ g(x,y) = \chi(W^0(x)) \chi(W^0(y)) \chi(W^+(x) \cap W^+(y)) \; . $$

\section{Footnotes}

\paragraph{}
The results in this paper have appeared in previous research notes. 
This is an attempt for a self-contained publishable write-up. Theorem~(\ref{1}) was 
announced in the spring of 2016 and appeared first in \cite{Unimodularity}.
An important final step happened in November 2016 with the multiplicative 
Poincar\'e-Hopf result. 
Theorem~(\ref{2}) is in \cite{Spheregeometry}, where the set of diagonal elements
is outed as a combinatorial invariant as well as \cite{DehnSommerville}, where
the functional ${\rm tr}(L-g)$ is discussed. The arithmetic 
is discussed in \cite{ArithmeticGraphs,StrongRing}. The unimodularity theorem
Theorem~(\ref{1}) has been proven differently without the use of CW complexes in 
\cite{MukherjeeBera2018}.

\paragraph{}
The name ``energy" suggests itself from the fact that for any Laplacian $L$,
the entry $g(x,y)=V_x(y)$ is the {\bf potential energy} at $y$ if a mass
has been placed at $x$ so that the sum over all $g(x,y)$ can
be seen as the total potential theoretical energy of $G$.
For the Laplacian $L=-\Delta/(4\pi)$ on $\RR^3$,
the value $V_y(x) = g(x,y)=|x-y|^{-1}$ is
the {\bf Newton potential} of a mass point at $y$ and
$\int \int |x-y|^{-1} \; d\mu(x) d\mu(y)$ is
the total energy of a measure $\mu$. If $\mu$ is a
{\bf mass density} then this is the classical
{\bf total potential theoretic energy} of the mass configuration.
In the case of a simplicial complex, we could look also at the energy
${\rm E}(\mu) = \sum_x \sum_y g(x,y) d\mu(x) d\mu(y)$ of a
measure $\mu$ given by a measure vector
$\mu(x)_{x \in G}$ with $\mu(x) \geq 0$.
In our case, we look at the uniform measure
giving every simplex the same weight $1$.

\paragraph{}
The definition of a CW-complex could be generalized
by using ``homotopic to $K_1$" instead of contractible. But this would complicate
proofs. Examples, where contractibility does not agree with ``homotopic to $K_1$"
is the {\bf dunce hat} or the {\bf Bing house}. It has a relation with {\bf shellability},
the property that $G$ is pure of some dimension $d$ and that there is an ordering $x_1, \cdots, x_n$ of maximal
$d$-simplices such that the complex generated by $x_1,\dots x_k$ 
intersected with the complex generated by $x_{k+1}$ is a $(d-1)$-dimensional
shellable complex. An elegant generalization which avoids discrete homotopy is to build
CW-complexes in which one replaces spheres by {\bf Dehn-Sommerville complexes}, complexes for
which the $f$-function $f_G(t) = 1+f_0 t + f_1 t^2 + \cdots + f_d t^{d+1}$ has the property
that $f(t-1/2)$ is even for even $d$ and odd for odd $d$. There is a class of
Whitney complexes defined recursively as graph sets $\mathcal{X}_{-1}=\{ 0 \}$,
$\mathcal{X}_d = \{ G \; | \; \chi(G)=1+(-1)^d$ and $S(x) \in \mathcal{X}_{d-1} \}$ which
are all Dehn-Sommerville as a consequence of Gauss-Bonnet 
$f_G(t) = \sum_x F_{S(x)}(t)$,
where $F_G(t)$ is the anti-derivative of $f_G(t)$. The energy theorem generalizes to this
class of CW complexes because the multiplicative Poincar\'e-Hopf theorem works there.

\paragraph{}
The development of the notion of a manifold \cite{Scholz} is closely related
to combinatorial structures built by Van Kampen, H. Weyl \cite{Weyl1925} (page 10)
or J.H.C. Whitehead, who also happend to introduce CW complexes.
The story of CW complexes starts with Ehresmann (1933) 
and is linked to algebraic developments \cite{Dieudonne1989}. 
The papers of Whitehead \cite{Whitehead} from 1939 to 1941 established that
CW complexes have nice homotopy properties. The definition of discrete CW complexes
was done to prove unimodularity \cite{Unimodularity}.
It depends on the combinatorial definition of what a "sphere" is. Already 
van Kampen did such constructs in 1929. A subclass of CW complexes 
is the strong ring generated by simplicial complexes \cite{StrongRing}. The strong ring is
isomorphic to a subring of the Sabidussi ring \cite{Sabidussi} and can be seen as a subring of a general 
algebraic Stanley-Reisner ring construction \cite{Stanley86}. The strong ring is a category of 
objects where Gauss-Bonnet, Brouwer-Lefshetz fixed point theory \cite{brouwergraph}, 
Euler-Poincare, arithmetic compatibility with the spectrum and that the energy theorem works
and that energy is a ring homomorphism on the strong ring. 

\paragraph{}
Abstract simplicial complexes and graphs have been close to each other 
already in the context of graph coloring and topological graph theory as in \cite{TuckerGross}. 
Many mathematicians, in particular Poincar\'e, Birkhoff or Whitney worked both in discrete 
settings as well as using continuum topology. Graphs are still less used in topology, maybe because
{\it ``the origins of graph theory are humble, even frivolous"} to quote \cite{BiggsLloydWilson}. An other
reason is that graphs often are treated as {\bf $1$-dimensional simplicial complexes} and 
are  not equipped with more powerful simplicial complexes like the clique complex. 
 Ivashchenko \cite{I94} translated Whitehead's homotopy notion into concrete procedures 
in graph theory. It has been simplified in \cite{CYY} which is the version we use and which is 
crucial for defining "sphere" combinatorially.

\paragraph{}
Combinatorial definitions and characterizations of spheres were looked for already in \cite{Weyl1925}. 
One can define a ``sphere" using Morse theory as classically spheres 
are spaces on which the minimal number of
critical points of a Morse function is two. These things are mostly equivalent \cite{knillreeb}. 
The Morse approach is used in Forman's discrete Morse theory \cite{forman95} and based on the classical 
Reeb sphere theorem characterizing spheres as manifolds for which the minimal number of critical 
points is $2$. 
An important property of spheres are Dehn-Sommerville relations which imply $f(x-1/2)$ being either
even or odd allowing to look at geometries where the Dehn-Sommerville spaces replace spheres. It is natural
as both Dehn-Sommerville spaces as well as spheres are invariant under the join operation. See
\cite{dehnsommervillegaussbonnet}. 

\paragraph{}
The first definition of abstract simplicial complexes was given in 1907 by Dehn and Heegaard (see
\cite{BurdeZieschang}). First attempts to define discrete manifolds go back to Tietze in 1908. 
More work was done by Brouwer, Steinitz, Veblen, Weyl and Kneser. 
The first textbook in which abstract simplicial complexes were used
heavily and stressed is \cite{alexandroff}, a book first published in 1947. 
Alexandroff calls a simplicial complex an ``unrestricted 
skeleton complex" and an arbitrary set of finite sets is called there a {\bf skeleton complex}.
The notion of simplicial complex is still mostly used in Euclidean settings
like in \cite{Hatcher}, but abstract simplicial complexes have entered modern
topology textbooks like \cite{spanier,Maunder} of \cite{LeeTopologicalManifolds}. 
Its use was amplified with the emergence
of {\bf simplicial sets}, which generalize simplicial complexes. In \cite{Maunder}, also
the join of two simpicial complexes is defined. The analogue definition for graphs origins 
from Zykov \cite{Zykov}. Many examples simplicial complexes in graphs are covered in \cite{JonssonSimplicial}. 
It uses a notion of abstract simplicial complex in which empty sets (the "void") is present.
(This is for example used in \cite{Stanley86}).  
This leads to {\bf reduced $f$-vectors} $(f_{-1},f_0,\dots)$ with $f_{-1}=1$ and 
{\bf reduced Euler characteristic} $\chi(G)-1$ used in enumerative combinatorics \cite{Stanley86}.

\paragraph{}
Notions of {\bf shellability} which are related to homotopy came only later. 
Bruggesser and Mani \cite{BruggesserMani} in 1971 gave the first recursive combinatorial definition.
They cite Rudin's paper \cite{Rudin1958} who calls a triangulation of 
a space shellable if there exists an ordering of the maximal simplices such that during the build up, 
the complexes $G_0,G_1,\dots$ 
are all homeomorphic to $G$. Rudin cites D.E. Sanderson, a paper from 1957 who calls a $d$-manifold 
with boundary shellable if there is an ordered cellular decomposition into d-cells with disjoint interior such that $X_k$ intersected with the boundary $G_{k-1}$
is homeomorphic to a $(d-1)$-cell. Also this definition makes uses of homeomorphism and so Euclidean embeddings. 
Sanderson actually works with what we today call the PL manifolds and  shows that for any triangulation, there is as a 
shellable subdivision. The Sanderson paper cites a paper of R.H. Bing from 1951 who looks at a "partitioning" into mutually 
exclusive open sets in Euclidean space whose sum is dense. If $G_k$ is an ordered sequence of these sets, then 
$G_k$ intersected with $X_{k+1}$ is a connected set. 
The Bing house can be thickened and triangulated to be unshellable even so the thickened house is a 3-ball.

\paragraph{}
Green's functions are ubiquitous in mathematical physics. Whenever there is an
operator $H$ serving as a Laplacian, then $g(x,y)$ are the matrix entries of the inverse of $H$. The entries $g(x,x)$
usually do not exist, like for $-\Delta/(4\pi)$ on $\mathbb{R}^3$, where $g(x,y)=|x-y|^{-1}$ is the Newton potential.
One looks therefore at the inverse $g_{\lambda}(x,y)$ of $H-\lambda$, where $\lambda \in \CC$ is a 
parameter. Now, $\lambda \to g_{\lambda}(x,y)$ is analytic outside the spectrum of $H$. The diagonal Green's function 
$\lambda \to g_{\lambda}(x,x)$ is a Herglotz function if $H$ was self-adjoint. While for real $\lambda$, this might 
not exist, one can look at limits like the {\bf Krein Spectral shift} 
$\xi(x,\lambda)=\lim_{\epsilon \to 0} {\rm arg}(g_{\lambda+i\epsilon}(x,x))$ 
which exist for almost all $\lambda$ \cite{xifunction}.
If operator valued random variables are used, the limit of the
expectation of $g_{\lambda+i\epsilon}$ often exists almost everywhere and encodes the derivative of a Lyapunov exponent and its conjugate, 
the density of states. In one-dimensional Schr\"odinger setups, one has $g_{\lambda}(x,x) = 1/(m^+(x,\lambda)-m^-(x,\lambda))$, 
where $m^{\pm}$ are Oseledec spaces. The unboundedness of $g$ is reflected in {\bf non-uniform hyperbolicity} of the cocycle dynamics,
for which stable and unstable directions can get arbitrarily close. In the current 
discrete setting, where for $\lambda=0$ things stay bounded, we don't have technical difficulties
and Green's functions given by topological ``curvatures" $g(x,x)$ are bounded. We are in a {\bf uniformly hyperbolic situation}. 

\paragraph{}
Classically, if we have a {\bf hyperbolic fixed point} $x$ of a transformation or a hyperbolic
{\bf equilibrium point} of a flow on a $n$-dimensional 
Riemannian manifold $M$, then a {\bf theorem of Grobman-Hartman-Sternberg} assures the existence of stable 
and unstable manifolds $W^{\pm}(x)$ passing through $x$.
If we take a small geodesic sphere $S(x)$ around $x$, then $S^{\pm}(x) = S(x) \cap W^{\pm}(x)$ 
are $(k-1)-$ and $(d-k-1)$-dimensional spheres and also classically 
$S(x) = S^+(x) + S^-(x)$, where $+$ is 
the join. We can see the decomposition of a sphere $S(x)$ into a join of of a stable and unstable part
as such a hyperbolic structure for the gradient flow of the dimension functional $f(x)={\rm dim}(x)$ 
on $G$. Every $x \in G$ is a critical point. If we look at the Morse cohomology on the Barycentric
refinement of this complex, we get the original simplicial cohomology. The fact that simplicial cohomology 
does not change under Barycentric refinement can be seen as a simple example where Morse and simplicial 
cohomology agree in a discrete setting. We must stress however that this equivalence holds in the discrete
for any simplicial complex $G$. There is no discrete manifold structure required. 

\paragraph{}
The total energy $\sum_{x,y} g(x,y)$ is the discrete analogue of classical energies in potential theory. 
In dimension $d>2$, the {\bf Newton potential} in $\mathbb{R}^d$ is $g(x,y)=|x-y|^{2-d}$. 
The prototype is the {\bf scalar Laplacian} $L=-\Delta/(4\pi)$ in $\RR^3$. The Gauss law 
$Lf=\rho$ reproduces via the divergence theorem the Newton law, the inverse square law of gravity or
electro statics. Gauss noticed that $g(x,y) = 1/|x-y|$, explaining gravity. 
But the Gauss discovery allows also to define Newton's law on any space equipped with 
a Laplacian. The two dimensions, the Laplacian $-\Delta/(2\pi)$ on $\RR^2$ gives the Green's function $g(x,y) = -\log|x-y|$ and
$-\int \int \log|x-y| \; d\mu(x) d\mu(y)$ is the logarithmic energy which can also be finite for singular measures like
measures located on Julia sets. For discrete measures $\mu$, one can disregard selfinteraction and get the 
logarithm of van der Monde determinant. The one-dimensional case $-\Delta/2 = d^2/2dx^2$ with $g(x,y) = |x-y|$ is used 
in statistics.  Its energy $I(\mu) = \int_{\RR} \int_{\RR} |x-y| d\mu(x) d\mu(y)$ is the {\bf Gini energy}.
It plays a role in statistics, where $\mu$ is the law of a random variable. 
Discrete Green functions in graph theory were studied in \cite{ChungYau2000} and appear naturally in any Markov setting. 
Let us mention that for the Hodge Laplacian $H=(d+d^*)^2$, whose inverse $h$ on the ortho complement of harmonic functions
is defined (which is called the pseudo inverse), we have no interpretation of the total energy $\sum_{x,y} h(x,y)$ yet. 
There might be no natural one. 

\paragraph{}
Discrete curvatures have first appeared in graph coloring contexts \cite{Heesch} but only 
in a two-dimensional setup. See \cite{Higuchi}. The discrete Gauss-Bonnet theorem is different
from Gauss-Bonnet theorems for polyhedra where excess angles matter  (see e.g. \cite{Polya54}). An attempt to produce a 
discrete second order curvature worked in the simplest Hopf Umlaufsatz situation \cite{elemente11}.
After working on this more \cite{poincarehopf,cherngaussbonnet} and integral theoretic results
\cite{indexexpectation,colorcurvature} (where the later are discrete analogues of Banchoff theorems
\cite{Banchoff1967}) the situation is transparent: any deterministic or random diffusion 
process can be used to disperse the original curvatures $\omega(x)$ on simplices to the zero dimensional
part. If done in the most symmetric way to the nearest zero dimensional point one gets the Levitt
curvature $K(x)$ \cite{Levitt1992}. If done along the gradient of a function $f$ one gets Poincar\'e-Hopf. 
An example was $f=-{\rm dim}$, where the curvature is $k(x)=\omega(x) (1-\chi(S(x)))$ we have seen here.
If the diffusion is done on the right scale together with adapted Barycentric refinements,
one expects to get the Euler curvature in a Riemannian manifold limit. 

\paragraph{}
The energy theorem is part of a story in arithmetic:
there is a ring $\mathcal{Q}$ of finite simple graphs which after taking graph complements 
becomes a ring $\overline{\mathcal{Q}}$. Now, if we look at the strong ring generated by 
simplicial complexes, the corresponding connection graphs define a subring $\overline{\mathcal{Q}}$ 
on which Euler characteristic is by the energy theorem given as a sum of matrix inverse elements. The ring $\mathcal{Q}$
has a representation in a matrix tensor ring. Every element in $\mathcal{Q}$ is represented
by a matrix, the connection Laplacian of its graph complement. We can so attach a spectrum
$\sigma(G)$ of a ring element in $\mathcal{Q}$. Under addition, the spectra add, under 
multiplication, the spectra multiply. The same happens with Euler characteristic. 
The upshot is that every generalized rational number,
an element $x$ in $\mathcal{Q}$ defines a particle system to which we can attach an Euler characteristic
$\chi(\overline{x})$ which is a total potential energy of the complex $\overline{x}$ and also attach a matrix 
$L(x)$ whose spectrum is compatible with the ring operation. 

\paragraph{}
There are many open questions. We would like to know about  energy moment 
$\sum_{x,y} g(x,y)^3$. It appears often to be an integer multiple of the energy 
$\sum_{x,y} g(x,y)$. While we know $g(x,x) = 1-\chi(S(x))$ and that the set of
these diagonal entries are a combinatorial invariant in the sense that the set is stable already after one
Barycentric refinement of a complex we don't have an interpretation
for $k$-moments $\sum_{x,y} g(x,y)^k$ of the energy yet. Especially interesting looks the variance
$\sum_{x,y} (g(x,y)-m)^2/n^2$, where $m=\chi(G)/n^2$ is the expectation of the matrix entries of $g$. 
It is a measure for the {\bf energy fluctuation} in the complex. 
Having developed parametrized versions of Gauss-Bonnet \cite{dehnsommervillegaussbonnet} and 
Poincar\'e-Hopf \cite{parametrizedpoincarehopf} more recently, there is also
a parametrized version of the energy theorem in which the Euler characteristic 
$f_0-f_1+f_2-\dots$ 
is replaced with $f$-functions $f(t) =1-f_0 t + f_1 t^2 - \cdots$. 
The energy theorem then tells that the sum of the matrix entries
of the inverse $g$ is the $f$ function of $G$. 

\paragraph{}
An inverse spectral problem is to hear the Euler characteristic of $G$ from the
spectrum $\sigma(H)$. We use $H=(d+d^*)^2=d d^* + d^* d$, where
$d: \Lambda^k \to \Lambda^{k+1}$ is the
exterior derivative, where $\Lambda^k$ is the $f_k$-dimensional space of discrete $k$-forms,
functions on $k$-dimensional oriented simplices of $G$. The matrices $H$ and $L$ are both $n \times
n$ matrices if $G$ has $n$ elements. If $G$ is one-dimensional, then
$L-L^{-1}$ is similar to $H$ \cite{ListeningCohomology}. This even holds for products of 
one-dimensional spaces.  One can even hear the
Betti numbers of a Barycentric refined $G$ from $H$ or $L$ as $b_0$ is the number of eigenvalues $1$ and
$b_1$ is the number of eigenvalues $-1$ of $L$. In dimension $2$ or higher,
one can not hear the Betti numbers $b_k$ of $G$ from the spectrum $\sigma(L)$ of $L$ in
general but it is conceivable that this could be true for Barycentric refinements.

\bibliographystyle{plain}

\end{document}